\definecolor{db}{RGB}{0, 0, 130}
\definecolor{rp}{rgb}{0.25, 0, 0.75}
\definecolor{dg}{rgb}{0, 0.6, 0}
\newtheorem{theorem}{Theorem}[section]
\newtheorem{definition}{Definition}[section]
\newtheorem{assumption}[theorem]{Assumption}
\newtheorem{lemma}[definition]{Lemma}
\newtheorem{proposition}[definition]{Proposition}
\newtheorem{remark}[definition]{Remark}
\def\1{\mathbbm{1}}
\def\K{\mathbb{K}}
\def\R{\mathbb{R}}
\def\E{\mathbb{E}}
\def\N{\mathbb{N}}
\def\x{\times}
\def\Om{\Omega}
\def\om{\omega}
\def\Cc{\mathcal{C}}
\def\Fc{\mathcal{F}}
\def\F{\mathbb{F}}
\def\P{\mathbb{P}}
\def\gammab{\bar{\gamma}}
\def\Tc{\mathcal{T}}
\def\Lc{\mathcal{L}}
\def\Pc{\mathcal{P}}
\def\Wc{\mathcal{W}}
\def\Mc{\mathcal{M}}
\def\Dc{\mathcal{D}}
\def\Hc{\mathcal{H}}
\def\Xk{X^k}
\def\Xmuk{X^{1,k}}
\def\Xnuk{X^{2,k}}
\def\Wk{W^k}
\def\Qk{Q^k}
\def\<{\langle}
\def\>{\rangle}
\DeclareMathOperator{\Tr}{Tr}
\def\gammab{\bar \gamma}
\def\xr{\mathrm{x}}
\def\Omb{\bar \Om}
\def\omb{\bar \om}
\def\Fbb{\bar \F}
\def\Fcb{\bar{\Fc}}
\def\Pb{\bar \P}
\def\Zb{\bar Z}
\def\mub{\bar \mu}
\def\Omt{\tilde \Om}
\def\omt{\tilde \om}
\def\Ft{\tilde \F}
\def\Fct{\tilde \Fc}
\def\Zt{\tilde Z}
\def\ie{\textit{i.e.}}
\def\eg{\textit{e.g.}}
\title{On McKean--Vlasov Branching Diffusion Processes}
\author{
	Julien Claisse \thanks{Universit\'e Paris-Dauphine, PSL University, CNRS, CEREMADE, Paris. claisse@ceremade.dauphine.fr}
	\and Jiazhi Kang\thanks{Department of Mathematics, The Chinese University of Hong Kong. jzkang@math.cuhk.edu.hk}
        \and Xiaolu Tan\thanks{Department of Mathematics, The Chinese University of Hong Kong. xiaolu.tan@cuhk.edu.hk, Research supported by Hong Kong RGC General Research Fund (projects 14302921).}
}
\date{\today}
\begin{document}

\maketitle

	We study a nonlinear branching diffusion process in the sense of McKean, \ie, where particles are subjected to a mean-field interaction. 
	We consider first a strong formulation of the problem and we provide an existence and uniqueness result  
    by using contraction arguments.
	Then we consider the notion of weak solution and its equivalent martingale problem formulation. In this setting, 
	 we provide a general weak existence result, as well as a propagation of chaos property, \ie,
	the McKean--Vlasov branching diffusion is the limit of a large population branching diffusion process with mean-field interaction.

\section{Introduction}

	The McKean--Vlasov stochastic differential equation (SDE) has been introduced to describe the limit of a large population system,
	where each particle dynamic is ruled by an SDE, whose coefficient depends on an interaction term given by the empirical distribution induced by the whole population.
	In the symmetric setting, where each particle SDE has the same coefficient functions and is driven by an independent Brownian motion,
	the interaction term converges to the distribution of a representative particle as the number of particles tends to infinity.
	Let us mention the pioneering work of McKean \cite{Mckean 1967} and Kac \cite{Kac 1956}, and the pedagogical lecture notes of Snitzman \cite{Sznitman 1991} for the early development of the subject.
	Recently,  McKean--Vlasov SDE have drawn much attention, partially due to the development of the mean-field game (MFG) theory introduced independently and simultaneously by Lasry and Lions \cite{Lasry Lions 2007}, and by Huang, Caines, and Malham\'e \cite{Huang Malhame Caines 2006}.

	\vspace{0.5em}

	While standard McKean--Vlasov SDE describes a large population system, its population size stays unchanged from the beginning to the end.
	In this paper, we will study the McKean--Vlasov SDE in a setting where the population size evolves according to a branching process.
	The mathematical modelling of population dynamic has been intensively studied during more than one century.
	In particular, the branching process theory has been largely developed due to its various applications in biology, ecology, medicine, etc.,
	for which let us simply refer to the book of Bansaye and M\'el\'eard \cite{BansayeMeleard}.
	We are particularly interested in the branching diffusion process, which is a branching process where each particle has a feature, such as the spatial position, whose dynamic is ruled by a diffusion process.
	Such processes have been first studied by Skorokhod \cite{Skorokhod}, Ikeda, Nagasawa and Watanabe \cite{IkedaNW} in 1960s, notably to represent a class of nonlinear PDEs as extension of the Feynman-Kac formula.
	It has also been extended to represent a larger class of semilinear PDEs, and to serve within a Monte Carlo method to solve the corresponding PDEs, see e.g. Henry-Labord\`ere \textit{et al.}~\cite{Henry 2014, LOTTW}.
	Additionally, the control of branching diffusion processes has been studied in a context without mean-field interaction, see e.g. Nisio \cite{Nisio 1985}, Ustunel \cite{Ustunel 1981}, Claisse \cite{Claisse 2018}, Kharroubi and Ocello \cite{KharroubiOcello}, etc.
	We also mention the recent work in Claisse, Ren and Tan \cite{Claisse Ren Tan 2019} where the authors study a controlled branching diffusion process with interaction in a mean-field game setting.

	\vspace{0.5em}

	In this paper, we will study a class of McKean--Vlasov branching diffusion process, where the coefficient functions depend on an interaction term given by the distribution of the process itself.
	Equivalently, it is an extension of the McKean--Vlasov SDE, where the population size dynamic is ruled by a branching process.
	Such processes have already been studied by Fontbona and M\'el\'eard \cite{Fontbona Meleard 2015}, and by Fontbona and Mu\~noz \cite{Fontbona Munoz 2022}, 
	in a setting where the branching process is a birth-death process and the interaction is given by a convolution type term.
	By considering the associated Fokker-Planck equation which describes the dynamic of the marginal distribution, they obtained an existence and uniqueness result as well as a quantitative propagation of chaos result.
	Here we will consider a McKean--Vlasov branching diffusion with more general interaction and in a path-dependent setting.
	In contrast to the Fokker-Planck equation on the marginal distribution of the process as in \cite{Fontbona Meleard 2015}, we will consider the stochastic process itself as  a solution to the McKean--Vlasov SDE with branching.
	As in the classical SDE theory, we will first apply the contraction argument in order to obtain an existence and uniqueness result for strong solutions.
	Next, we consider the notion of weak solution as well as the related martingale problem formulation to obtain a general existence result.
	Moreover, we provide a propagation of chaos result, that is, the McKean--Vlasov branching diffusion describes the limit of a large population branching diffusion process with interaction given by the empirical distribution induced by the whole population.

	\vspace{0.5em}

	The rest of the paper is organized as follows.
	In Section \ref{sec:main_results}, we formulate the McKean--Vlasov branching diffusion and then provide our main results, including strong existence and uniqueness,  weak existence and propagation of chaos.
	In Section \ref{sec:proof_strong}, we provide the proof of strong existence and uniqueness by contraction arguments.
	Finally, the proofs of weak existence as well as a propagation of chaos result by weak convergence technique are completed in Section~\ref{sec:proof_weak}.

\paragraph{Notations}
	Let us first introduce some notations used in the rest of the paper.

	\vspace{0.5em}

	\noindent $\mathrm{(i)}$ Let $(X, \rho)$ be a non-empty metric space, we denote by $\Pc(X)$ (resp. $\Mc(X)$) the space of all Borel probability measures (resp. non-negative finite measures) on $X$.  
	Given a measure $\mu\in\Mc(X)$ and a mapping $f:E\to\R,$ we denote the integration by
	\begin{equation*}
	 \langle\mu,f\rangle := \int_E f(x) \,\mu(dx).
	\end{equation*}
   Further, for $p\geq1$, we denote by $\Pc_p(X)$ the space of probability measures on $X$ with $p$-order moment, \ie,
		$$
			\Pc_p(X):=\left\{\mu\in \Pc(X):\int_{X}\rho(x,x_0)^p \mu(d x)<+\infty\right\},
		$$
		for some (and thus all) fixed point $x_0\in X$.
		Let us equip $\Pc_p(X)$ with the Wasserstein metric $\Wc_p$ given by
		$$
			\Wc_p(\mu,\nu) := \inf_{\lambda \in\Lambda(\mu,\nu)} \left(\int_{X\x X} \rho(x, y)^p\lambda(dx,dy) \right)^{1/p},
		$$
		where $\Lambda(\mu,\nu)$ is the collection of all Borel probability measures $\lambda$ on $X\x X$ whose marginals are $\mu$ and $\nu$ respectively.

		\vspace{0.5em}

		\noindent $\mathrm{(ii)}$ To describe the progeny of the branching process, we use the classical Ulam--Harris--Neveu notation.
		Let
		$$
			\K:= \{\emptyset\}\cup\bigcup_{n=1}^{+\infty}\N^n.
		$$
		Given $k, k'\in\K$ with $k=k_1...k_n$ and $k'=k'_1...k'_m$, we define the concatenation of labels $kk':= k_1...k_nk'_1...k'_m$, and denote by $k\prec k'$ if there exists $\tilde{k}$ such that $k'=k\tilde{k}$. 
		Let us define
		$$
			E 
			~:=~
			\left\{
			\sum_{k\in K} \delta_{(k, x^k)} :
			K \subset \K ~\text{finite},
			x^k\in \R^d,
			\text{for all}~ k, k' \in K, k\nprec k'
			\right\}.
		$$
		Let $\K$ be equipped with the discrete topology, then $E$ is a closed subspace of $\Mc(\K\x\R^d)$ under the weak convergence topology and thus $E$ is a Polish space.
		We provide a metric $d_E$ on $E$ which is consistent with the weak convergence topology:
		for all $e_1, e_2\in E$ such that $e_1 = \sum_{k\in K_1} \delta_{(k, x^k)}$ and $e_2 = \sum_{k\in K_2} \delta_{(k, y^k)}$,
		\begin{equation} \label{eq:def_d_E}
			d_E(e_1,e_2)
			~:=
			\sum_{k\in K_1\cap K_2}                                                                                                                   
			\left(|x^k-y^k|\wedge1\right)
			~+~
			\#(K_1\triangle K_2),
		\end{equation}
		where $K_1\triangle K_2 := (K_1\setminus K_2) \cup (K_2\setminus K_1)$ and $\#(K_1\triangle K_2)$ is the number of element in $K_1\triangle K_2$.
		Let $f:=(f^k)_{k\in\K}:\K\x\R^d\rightarrow \R$ be a function and $e:=\sum_{k\in K}\delta_{(k,x^k)}\in E$, we have
		\begin{align*}
			\<e,f\>
			~=~
			\sum_{k\in K}f^k(x^k).
		\end{align*}
		We also fix the reference point $e_0 \in E,$ the null measure,  which means that the associated set of particle is empty. 
	
		\vspace{0.5em}

		\noindent $\mathrm{(iii)}$ 
		We fix a constant $T > 0$ and an integer $d \ge 1$ throughout the paper,
		and denote by $\Cc^d:=C([0,T],\R^d)$ the space of all $\R^d$-valued continuous paths on $[0,T]$, equipped with the uniform convergence norm $\|\om\| := \sup_{0 \le t \le T} |\om_t|\wedge 1. $
		Next, we denote by $\Dc_E:=D([0,T], E)$ the Skorokhod space of all $E$-valued càdlàg paths on $[0,T]$.
		For the weak formulation, we work with the classical Skorokhod metric on $\Dc_E.$ 
		While, for the strong formulation, we consider the uniform convergence metric on $\Dc_E$  given by 
		\begin{equation} \label{eq:def_d_DE}
			d(\om^1, \om^2) 
			~:=
			\sup_{0\leq t \leq T} d_E(\om^1_t, \om^2_t),
			~\mbox{for all}~
			\om^1, \om^2 \in \Dc_E,
		\end{equation}
        and the truncated metric
        \begin{align*}
            d_t(\om^1, \om^2) 
			~:=
			d(\om^1_{t\wedge\cdot},\om^2_{t\wedge\cdot}),
			~\mbox{for all}~
			\om^1, \om^2 \in \Dc_E
            ~\mbox{and}~
            t\in[0,T],
        \end{align*}
       so that
		\begin{equation}\label{eq:p1}
			\Pc_1(\Dc_E) 
			=
			\Big\{ 
				\mu \in \Pc(\Dc_E) ~:  \int_{\Dc_E} \sup_{t\in[0,T]}\langle\om_t,\1{}\rangle \,\mu(d\om) < \infty 
			\Big\},		
		\end{equation}
		as $d_E(\om_t, e_0) = \langle\om_t,\1{}\rangle$ where $\1: \K\x\R^d \longrightarrow \R$ is the constant function defined by $\1(k,x) = 1$ for all $k \in \K,$ $x\in\R^d.$
		
		Let us also define the stopping operator $\pi_t : \Cc^d \longrightarrow \Cc^d$ (resp. $\pi_t: \Dc_E \longrightarrow \Dc_E$) by $\pi_t(\om):=\om_{t\wedge\cdot}$ for all $\om \in \Cc^d$ (resp. $\om \in \Dc_E$).
		Moreover, given a probability measure $m \in \Pc(\Dc_E)$ and $t \ge 0$, we define the pushforward measure $m_t:= m \circ (\pi_t)^{-1}$. 

\section{Main Results}
\label{sec:main_results}

\subsection{McKean--Vlasov Branching Diffusion}
\label{subsec:prelim}

	Let us  introduce first the McKean--Vlasov branching diffusion process by means of a pathwise description.
	We consider the following coefficient functions:
	$$
		\big( b,\sigma, \gamma, (p_{\ell})_{\ell \in \N} \big)
		:
		[0,T] \x\Cc^d\x\Pc(\Dc_E) \longrightarrow \R^d\x\R^{d\x d} \x  [0, \gammab] \x [0,1]^{\N}, 
	$$
	where $\gammab > 0$ is a fixed constant.
	Namely, $b$ and $\sigma$ are the drift and diffusion coefficient for the movement of each particle, $\gamma$ is the death rate, 
	and $(p_{\ell})_{\ell \in \N}$ is the probability mass function of the progeny distribution. 
	In particular, it holds that $p_{\ell}(\cdot) \in [0,1]$ for each $\ell \in \N$, and $\sum_{\ell \in \N} p_{\ell}(\cdot) = 1$.
	Let us also define a partition $(I_{\ell}(\cdot))_{\ell \in \N}$ of $[0,1]$ by
	$$
		I_{\ell} (\cdot)
		~:=~
		\Big[
			\sum_{i=0}^{\ell -1}p_i (\cdot), ~\sum_{i=0}^{\ell} p_i (\cdot)
		\Big),
		~~\mbox{for each}~
		\ell \in \N.
	$$
	
	Let $(\Om, \Fc, \P)$ be a probability space with filtration $\F = (\Fc_t)_{t \ge 0}$, equipped with a $\Fc_0$-measurable $E$-valued random variable $\xi$,
	and  a family of mutually independent,  $\R^d$-valued Brownian motion $(\Wk)_{k\in\K}$ and Poisson random measures $(\Qk(ds,dz))_{k\in\K}$ on $[0,T] \x [0, \gammab] \x [0,1]$ with Lebesgue intensity measure $ds\, dz$. 	
	
	 We consider first a branching diffusion process with a fixed environment measure $\mu\in \Pc(\Dc_E)$ and initial state $\xi.$ It corresponds to a $E$-valued process $(Z_t)_{t \in [0,T]}$ given by
	\begin{equation} \label{eq:Xk2Z}
		Z_t ~:=~ \sum_{k\in K_t}\delta_{(k,\Xk_t)}, ~~t \in [0,T],
	\end{equation}
	where $K_t$ denotes the collection of all labels of particles alive at time $t \in [0,T]$ and $\Xk_t$ corresponds to the position of particle $k\in K_t.$ In particular, it holds that
	$$
		Z_0 = \xi = \sum_{k \in K_0} \delta_{(k, X^k_0)}.
	$$
	Then, for each $k\in K_t,$ the position $\Xk_t$ evolves as a diffusion process characterized by the following SDE:
	\begin{equation}\label{eq:SDE}
	d\Xk_t = b(t, \Xk_{t\wedge\cdot},\mu_t)dt+\sigma(t, \Xk_{t\wedge\cdot},\mu_t)d\Wk_t,
	\end{equation}
	where $\mu_t$ is the pushforward measure $\mu_t := \mu \circ \pi_t^{-1}.$
	Denote further by $S_k$ the birth time of particle $k.$ In particular, it holds that $S_k = 0$ for each initial particle $k \in K_0$.
	Then each particle $k$ runs a death clock with intensity $\gamma(t, X^k_{t\wedge\cdot}, \mu_t)$, \ie, its death time $T_k$ is given by
	$$
		T_k
		~:=~
		\inf
		\big\{
			t>S_k: \Qk \big(\{t\}\x[0,\gamma(t,\Xk_{t\wedge\cdot},\mu_t)] \x[0,1] \big) =1
		\big\}.
	$$
	Let $U_k$ be the random variable satisfying
	$$
		\Qk \big(\{T_k\} \x [0,\gamma(T_k,\Xk_{T_k\wedge\cdot},\mu_{T_k})] \x \{U_k\} \big) =1,
	$$
	and note that  $U_k$ is uniformly distributed over the interval $[0,1]$.
	In case that $U_k \in I_{\ell}(T_k, X^k_{T_k\wedge\cdot}, \mu_{T_k})$, at time $T_k,$
	the particle $k$ dies and gives birth to $\ell$ offspring particles $\{k1, \cdots, k \ell \}$, so that 
	\begin{align*}
		K_{T_k} = \left(K_{T_k-} \setminus \{k\}\right) \cup \{k1, \cdots,k \ell\}.
	\end{align*}
	In particular, the birth time of the offspring particles corresponds to the death time of the parent particle, \ie, $S_{ki}:= T_k$ for $i = 1, \cdots , \ell$.
	Further, the offspring particles start from the position of the parent particle, \ie,
	$$
		X^{ki}_{S_{ki}} = X^k_{T_k-}, ~~\text{for } i = 1, \cdots, \ell.
	$$
	Moreover,  we define $X^{ki}_t$ as its ancestor position before the birth time, \ie, $X^{ki}_t = X^k_t$ for $t< S_k$ and $i = 1, \cdots , \ell.$

	Then the McKean--Vlasov branching diffusion corresponds to the situation where the environment measure $\mu$ coincides with the law of the branching diffusion process $Z$,  \ie, $\mu = \P\circ Z^{-1},$
	 or equivalently,  
	 $\mu_t = \P \circ (Z_{t \wedge \cdot})^{-1}$ for all  $t\in[0,T].$

	\vspace{0.5em}
	
	Alternatively, let us provide a characterization of the McKean--Vlasov branching diffusion described above through a family of SDEs.
	Denote by $\Lc$ the infinitesimal generator of the diffusion $(b,\sigma)$, \ie, for all $(t, \xr ,m)\in[0,T]\x\Cc^d\x\Pc(\Dc_E)$ and $f\in C^2_b(\R^d,\R)$,
	\begin{align*}
		\Lc f(t, \xr, m)
		~:=~
		\frac{1}{2}\Tr\left(\sigma\sigma^{\top} (t, \xr,m)D^2f( \xr_t)\right) + b(t,\xr,m)\cdot Df( \xr_t).
	\end{align*}
	Then the McKean--Vlasov branching diffusion with initial condition $\xi$ can be characterized as the solution to the following SDE: for all $f=(f^k)_{k\in\K} \in C^2_b (\K \x \R^d, \R),$ $t\in[0,T],$
	\begin{multline} \label{eq:SDE_MKVB}
		\< Z_t, f \>
		= 
		\<\xi ,f\>
		+
		\int_0^t \sum_{k\in K_s}\Lc f^k(s, \Xk_{s\wedge\cdot}, \mu_s)ds
		+
		\int_0^t \sum_{k\in K_s}
		Df^k(\Xk_s)\sigma(s,\Xk_{s\wedge\cdot},\mu_s)
		d\Wk_s \\		
		\int_{(0,t]\x [0,\gammab] \x [0,1]}
		\sum_{k\in K_{s-}}\sum_{ \ell \geq 0}
		\left(\sum_{i=1}^{ \ell} 
		f^{ki}-f^k\right)\left(\Xk_s \right)\1_{[0,\gamma(s,\Xk_{s\wedge\cdot},\mu_s)]\x I_{\ell}(s,\Xk_{s\wedge\cdot},\mu_s)}(z)\Qk(ds,dz),
		\end{multline}
		with the condition
		\begin{equation}\label{eq:SDE_MKVB2}
		\mu_t = \P \circ (Z_{t\wedge \cdot})^{-1}, ~~\text{for all }t \in [0,T].
		\end{equation}

	\begin{remark}
		$\mathrm{(i)}$ In SDE \eqref{eq:SDE_MKVB}, the random processes $X^k_s$ and the random set $K_s$ act together as a part of the $E$-valued process $Z$ via the relation \eqref{eq:Xk2Z}. 
		Conversely, we can represent $X^k_s$ from $Z_s$ by using the relation $X^k_s = \< Z_s, I_k \>$ with $I_k (\tilde{k},x) := x \1_{\{\tilde{k} = k\}}$ for all $(\tilde{k},x) \in \K \x \R^d$.
		
		\vspace{0.5em}
		
		\noindent $\mathrm{(ii)}$ When the environment measure $\mu\in\Pc(\Dc_E)$ is fixed, 
		it becomes a classical branching diffusion process, whose well-posedness is ensured by standard Lipschitz and boundedness conditions, see e.g. Claisse \cite[Proposition 2.1]{Claisse 2018}.
		The crucial point here is therefore the requirement~\eqref{eq:SDE_MKVB2} that $\mu = \Lc(Z).$
		
		\vspace{0.5em}
		
		\noindent $\mathrm{(iii)}$ It might be of interest to consider as in Claisse, Ren and Tan~\cite{Claisse Ren Tan 2019},  an interaction term of the form $(\nu_t)_{t\in[0,T]}, \nu_t\in\Mc(\R^d),$ where for all $f:\R^d\to \R$ measurable bounded,
		\begin{equation*} 
		\int_{\R^d} {f(x) \, \nu_t(dx)} = \E\left[\langle Z_t, f\rangle\right] = \E\Big[\sum_{k\in K_t} {f(X_t^k)}\Big].
		\end{equation*}
		This case is clearly encompassed in our setting. 
		
		\vspace{0.5em}
		
		\noindent $\mathrm{(iv)}$ 	 It might be more natural to consider $\mu_{t-}$ instead of $\mu_t$ in the dynamic described above.  However, although the process $Z$ has jumps in $E$,  the mapping $t \mapsto \mu_t= \P \circ (Z_{t\wedge\cdot})^{-1}$ is continuous on $[0,T].$ 
	 Indeed, since the death time $T_k$ of each particle is generated by the Poisson random measure $Q^k$,
		it holds that  $\P(T_k = t) = 0$ and thus $\P(Z_t = Z_{t-}) = 1$ for all $t \in [0,T].$
	\end{remark}

\subsection{Strong Solution and Well-Posedness}

 Let us first consider the strong formulation of the McKean--Vlasov branching diffusion as defined below.

	\begin{definition}[Strong Solution] \label{def:strong_solution}
		A strong solution to the McKean--Vlasov branching diffusion SDE \eqref{eq:SDE_MKVB}--\eqref{eq:SDE_MKVB2} in a given probability space $(\Om, \Fc, \P)$ w.r.t.\  an initial condition $\xi$ and a fixed family of independent, Brownian motion $(\Wk)_{k\in\K}$ and Poisson random measures $(\Qk)_{k\in\K}$ on $[0,T] \x [0, \gammab] \x [0,1]$ with Lebesgue intensity measure,   is an $E$--valued c\`adl\`ag  process $Z = (Z_t)_{t \in [0,T]}$  adapted to the (augmented) natural filtration of $(\xi,(W_k,Q_k)_{k\in\K})$ satisfying the branching diffusion SDE~\eqref{eq:SDE_MKVB} together with the McKean--Vlasov condition \eqref{eq:SDE_MKVB2}.
	\end{definition}

	Under the following  assumptions, we can show existence and uniqueness of a McKean--Vlasov branching diffusion in the strong sense. 
	Recall that $\Dc_E$ is equipped with the uniform convergence metric defined in \eqref{eq:def_d_DE} so that the Wasserstein distance $\Wc_1$ on $\Pc_1(\Dc_E)$ is defined w.r.t.\ this metric.
	
	\begin{assumption} \label{A.1}
		\noindent $\mathrm{(i)}$ The coefficients functions $(b, \sigma, \gamma, (p_{\ell})_{\ell \ge 0})$ are progressive in the sense that
		$$
			\big(b, \sigma, \gamma, (p_{\ell})_{\ell \ge 0}\big) (t, \xr, m) 
			=
			\big(b, \sigma, \gamma, (p_{\ell})_{\ell \ge 0} \big) (t, \xr_{t \wedge \cdot}, m_t),
		$$
		for all $(t,\xr, m) \in [0,T] \x \Cc^d \x \Pc_1(\Dc_E).$
		
		\vspace{0.5em}	

		\noindent $\mathrm{(ii)}$ The coefficient functions $(b, \sigma, \gamma, (p_{\ell})_{\ell \ge 0})$ are uniformly bounded. The same holds for the function $\sum_{ \ell \ge 0} \ell p_{\ell}$ and we denote $M:=\|\sum_{ \ell \ge 0} \ell p_{\ell}\|_{\infty}<+\infty.$
	\end{assumption}

	\begin{assumption} \label{A.2}
		\noindent $\mathrm{(i)}$ The coefficient functions $b,\sigma,\gamma$ are Lipschitz in $(\xr, m)$ in the sense that, there exists a constant $L>0$ such that
		\begin{align*}
			\big|(b,\sigma,\gamma)(t, \xr ,m) - (b,\sigma,\gamma)(t, \xr',m') \big| 
			~\leq~
			L
			\big( \|\xr_{t\wedge \cdot} - \xr_{t\wedge \cdot}'\| + \Wc_{1}(m_t, m'_t)  \big),
		\end{align*}
		for all $(t, \xr, \xr', m, m') \in [0,T] \x \Cc^d \x \Cc^d \x \Pc_1 (\Dc_E) \x \Pc_1 (\Dc_E)$.

		\vspace{0.5em}	
        
		\noindent $\mathrm{(ii)}$There exist positive constants $(C_{\ell})_{\ell \ge 0}$ such that $M' := \sum_{\ell \in\N} \ell C_{\ell} < \infty$ and 
		\begin{align*}
			\big| p_\ell (t, \xr,m)-p_\ell (t, \xr',m') \big| 
			~\leq~ 
			C_{\ell} \big( \|\xr_{t\wedge \cdot} - \xr'_{t \wedge \cdot} \| + \Wc_{1}(m_t, m'_t) \big),
		\end{align*}
		for all $(t, \xr, \xr', m, m') \in [0,T] \x \Cc^d \x \Cc^d \x \Pc_1 (\Dc_E) \x \Pc_1 (\Dc_E)$ and $\ell \in \N$.
	\end{assumption}

	\begin{remark}
	Let us provide a simple example of a Lipschitz function on $\Pc_1(\Dc_E).$ 
		Let $f = (f^k)_{k \in \K}: \K \x \R^d \to \R$  and assume that there exists $C>0$ such that $|f^k(x)| \le C$ and $|f^k(x)-f^k(y)| \le C|x-y|$  for all $k\in\K,$ $x,y\in\R^d.$
		Define $F: [0,T]\x \Pc_1(\Dc_E) \to \R$ by
		\begin{equation*}
		 F(t,m) 
		 ~:=~
		 \int_{\Dc_E} \langle z(t), f\rangle \,m(dz).
		\end{equation*}
		Then we can easily check that $| F(t,m^1) - F(t,m^2)| \le C \Wc_1(m^1_t,m^2_t)$ for all $m^1, m^2 \in \Pc_1(\Dc_E)$.
		Indeed, given
		$$
			Z^1_t = \sum_{k \in K_t} \delta_{(k, X^{1,k}_t)}
			~~\mbox{and}~~
			Z^2_t = \sum_{k \in K'_t} \delta_{(k, X^{2,k}_t)}
		$$
		two arbitrary branching diffusion processes with distributions $m^1$ and $m^2$ respectively,
		it follows by straightforward computation that
		\begin{align*}
			| F(t, m^1) - F(t, m^2)|
			~=~&
			\Big| \E \Big[ \sum_{k \in K^1_t} f^k(X^{1, k}_t) - \sum_{k \in K^2_t} f^k(X^{2, k}_t) \Big] \Big|
			~\le~
			C \E \big[ d_E( Z^1_t , Z^2_t) \big].
		\end{align*}
	\end{remark}

	\begin{theorem} \label{thm:strong_sol}
		Let Assumptions \ref{A.1} and \ref{A.2} hold and assume further that the initial condition $\xi$ satisfies $\E [ \< \xi, \1\>] = \E [ \# K_0] < \infty.$
		Then there exists a unique strong solution $Z$  to the McKean--Vlasov branching diffusion SDE \eqref{eq:SDE_MKVB}--\eqref{eq:SDE_MKVB2} in the sense of Definition \ref{def:strong_solution} and it satisfies
		\begin{align*}
		\E\left[
			\sup_{0\leq t\leq T} \<Z_t, \1\>
		\right] 
		= \E\left[
			\sup_{0\leq t\leq T} \# K_t
		\right]
		< \infty.
		\end{align*}
	\end{theorem}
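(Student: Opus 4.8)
The plan is to rewrite the McKean--Vlasov constraint \eqref{eq:SDE_MKVB2} as a fixed point problem for the law of the process and to solve it by a Picard iteration. Fix an environment $\mu\in\Pc_1(\Dc_E)$. By Assumptions~\ref{A.1} and~\ref{A.2} the frozen coefficients $(t,\xr)\mapsto(b,\sigma,\gamma,(p_\ell)_\ell)(t,\xr,\mu)$ are bounded and Lipschitz in $\xr$ for the uniform norm, so the branching diffusion SDE \eqref{eq:SDE_MKVB} with this environment and initial condition $\xi$ has a unique strong solution $Z^\mu$ by the classical well-posedness of branching diffusions with a fixed environment (\cite[Proposition~2.1]{Claisse 2018}). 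Set $\Phi(\mu):=\P\circ(Z^\mu)^{-1}$. Then a strong solution of \eqref{eq:SDE_MKVB}--\eqref{eq:SDE_MKVB2} is precisely a process $Z^\mu$ whose environment $\mu$ is a fixed point of $\Phi$, and it suffices to produce a unique such fixed point.

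\textbf{Step 2: a uniform moment bound.} I would show $\E\big[\sup_{t\le T}\langle Z^\mu_t,\1\rangle\big]\le e^{\gammab MT}\,\E[\#K_0]$ \emph{with a constant independent of $\mu$}; in particular $\Phi$ maps $\Pc_1(\Dc_E)$ into itself, and the final estimate of the theorem will follow by taking $\mu$ equal to the fixed point. Since $\langle Z^\mu_t,\1\rangle=\#K_t$ increases only at branching times, and then by $(\mathrm{offspring}-1)$, one has $\sup_{t\le T}\#K_t\le B$, where $B$ is the total number of particles ever alive on $[0,T]$. Writing $\beta_n(t)$ for the expected number of generation-$n$ particles born before time $t$, the bounds $\gamma\le\gammab$ and $\sum_\ell\ell p_\ell\le M$ give $\beta_0\equiv\E[\#K_0]$ and $\beta_{n+1}(t)\le M\gammab\int_0^t\beta_n(s)\,ds$, whence $\beta_n(T)\le\E[\#K_0](M\gammab T)^n/n!$ and $\E[B]=\sum_{n\ge0}\beta_n(T)\le e^{M\gammab T}\E[\#K_0]<\infty$.

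\textbf{Step 3: contraction and conclusion.} Given $\mu,\nu\in\Pc_1(\Dc_E)$, I would realise $Z^\mu$ and $Z^\nu$ on one probability space driven by the same $\xi$ and the same family $(\Wk,\Qk)_{k\in\K}$, so that $(Z^\mu_{t\wedge\cdot},Z^\nu_{t\wedge\cdot})$ is a coupling and $\Wc_1(\Phi(\mu)_t,\Phi(\nu)_t)\le\E[d_t(Z^\mu,Z^\nu)]$. Call a particle \emph{coupled at time $t$} if it is alive in both systems and every one of its ancestral branching events --- its time and its number of offspring --- occurred identically in the two systems; then $K^\mu_t\triangle K^\nu_t$ is contained in the set of non-coupled particles, and the positional contributions in $d_E$ of particles alive in both but not coupled are bounded by $1$, hence by a count of non-coupled particles. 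For a coupled particle the two trajectories solve SDEs with the same Brownian increments and coefficients evaluated at $\mu_s$ versus $\nu_s$, and their gap vanishes at the particle's birth position; so Lipschitz continuity of $(b,\sigma)$, Doob's inequality and Gronwall's lemma give, uniformly in the particle, $\E[\sup_{s\le t}|X^{\mu,k}_s-X^{\nu,k}_s|^2]\le C\int_0^t\Wc_1(\mu_s,\nu_s)^2\,ds$, and summing over particles (Cauchy--Schwarz together with $\E[\#K_0]<\infty$ and Step~2) bounds the positional part of $\E[d_t(Z^\mu,Z^\nu)]$ by $C\big(\int_0^t\Wc_1(\mu_s,\nu_s)^2\,ds\big)^{1/2}$. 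A coupled particle becomes non-coupled only at a candidate branching time (common Poisson rate $\gammab$) that is resolved differently in the two systems, which happens --- per coupled particle, per unit time --- at rate at most $|\gamma(s,X^{\mu,k}_{s\wedge\cdot},\mu_s)-\gamma(s,X^{\nu,k}_{s\wedge\cdot},\nu_s)|+\gammab\sum_\ell|p_\ell(s,X^{\mu,k}_{s\wedge\cdot},\mu_s)-p_\ell(s,X^{\nu,k}_{s\wedge\cdot},\nu_s)|$; by Assumption~\ref{A.2} the expected number of non-coupled particles created at such an event is at most $C(\|X^{\mu,k}_{s\wedge\cdot}-X^{\nu,k}_{s\wedge\cdot}\|+\Wc_1(\mu_s,\nu_s))$ (the constant depending only on $L,M',\gammab,M$), and each such particle afterwards generates a subpopulation of expected size at most $e^{M\gammab T}$ again by Step~2. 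Combining these estimates yields $\E[d_t(Z^\mu,Z^\nu)]\le C\big(\int_0^t\Wc_1(\mu_s,\nu_s)^2\,ds\big)^{1/2}$, hence $\Wc_1(\Phi(\mu)_t,\Phi(\nu)_t)^2\le C^2\int_0^t\Wc_1(\mu_s,\nu_s)^2\,ds$ for all $t\in[0,T]$. Applying this along the Picard scheme $\mu^{(n+1)}:=\Phi(\mu^{(n)})$ and setting $w_n(t):=\Wc_1(\mu^{(n+1)}_t,\mu^{(n)}_t)^2$, one gets $w_n(t)\le C^2\int_0^t w_{n-1}(s)\,ds$, hence $w_n(T)\le\frac{(C^2T)^n}{n!}\|w_0\|_\infty$ and $\sum_n\Wc_1(\mu^{(n+1)},\mu^{(n)})=\sum_n\sqrt{w_n(T)}<\infty$; moreover the same coupling shows that the processes $Z^{(n)}:=Z^{\mu^{(n)}}$ are Cauchy in $L^1(d)$ on the complete space $(\Dc_E,d)$. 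Their limit $Z$ satisfies \eqref{eq:SDE_MKVB}--\eqref{eq:SDE_MKVB2} (passing to the limit in the coefficients thanks to their continuity in $(\xr,m)$ and $\mu^{(n)}\to\P\circ Z^{-1}$ in $\Wc_1$), the same estimate gives uniqueness, and the announced moment bound follows from Step~2.

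\textbf{Main obstacle.} The delicate part is Step~3: unlike the classical McKean--Vlasov SDE, the two systems carry \emph{different random particle sets}, and a single mis-resolved branching event corrupts an entire subtree whose size is controlled only in first moment --- indeed $\sum_\ell\ell^2 p_\ell$ is never assumed finite, so one cannot pass to a higher-order Wasserstein space and must exploit the truncation built into $d_E$ throughout (this is precisely why the natural contraction estimate is the quadratic inequality $\Wc_1(\Phi(\mu)_t,\Phi(\nu)_t)^2\le C\int_0^t\Wc_1(\mu_s,\nu_s)^2\,ds$ rather than a linear one). Making the per-particle decoupling rate genuinely linear in $\|X^{\mu,k}-X^{\nu,k}\|+\Wc_1(\mu_s,\nu_s)$ is exactly where Assumption~\ref{A.2}(ii) (Lipschitz $p_\ell$ with $\sum_\ell\ell C_\ell<\infty$) is used, and the generational bound of Step~2 is exactly what keeps the total mass of corrupted subtrees integrable; combining the two inside a single Gronwall argument is the technical heart of the proof.
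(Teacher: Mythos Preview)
Your Steps~1 and~2 (the fixed-point reformulation and the uniform moment bound $\E[\sup_t\#K_t]\le e^{\gammab MT}\E[\#K_0]$) are correct and essentially match the paper (Lemma~\ref{lemma unique existence fixed flow of law}).

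The gap is in Step~3, and it is a real one. Your positional estimate rests on the claim that for a \emph{coupled} particle the gap ``vanishes at the particle's birth position'' and hence $\E[\sup_{s\le t}|X^{\mu,k}_s-X^{\nu,k}_s|^2]\le C\int_0^t\Wc_1(\mu_s,\nu_s)^2\,ds$ uniformly in~$k$. But this is false for non-initial particles: even if every ancestral branching event is resolved identically in the two systems, the parent's \emph{position} at death differs (the environments $\mu,\nu$ differ), so each offspring inherits a nonzero gap $\Delta^{k}_{S_k}=\Delta^{k^-}_{S_k-}$. The per-particle $L^2$ bound you state therefore does not hold uniformly in the generation index. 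One can try to repair this by working with the ancestral extension $X^k_t=X^{k^-}_t$ for $t<S_k$, so that $\Delta^k_0=0$; but then the subsequent step---``summing over particles (Cauchy--Schwarz together with $\E[\#K_0]<\infty$ and Step~2)''---is not justified: to pass from per-particle $L^2$ bounds to an $L^1$ bound on $\sum_k\1_{\{k\text{ coupled}\}}|\Delta^k|$ via Cauchy--Schwarz you need control of $\E[\sum_k\1_{\{k\text{ coupled}\}}|\Delta^k|^2]$, and the event ``$k$ coupled at time $t$'' depends on positions (through the $\gamma$-thresholds) and hence on the Brownian motions, so it is \emph{not} independent of $|\Delta^k|$. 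Neither a per-label Cauchy--Schwarz nor conditioning on the branching structure yields the claimed bound without further work. This is exactly the obstruction the paper flags in Remark~\ref{remark reason for L 1}: second-moment arguments do not close because only $\sum_\ell\ell p_\ell$ is controlled.

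The paper circumvents this entirely by staying in $L^1$. In Lemma~\ref{lemma compare of two solutions} it bounds $\E[d_{T'}(Z^\mu,Z^\nu)]$ directly: BDG with exponent~$1$ on the stochastic integral produces a factor $\sqrt{T'}$ (not an integral inequality), the birth gap $\|\Delta^k_{\bar S_k\wedge\cdot}\|$ is handled recursively by splitting on whether the parent lies in $K^{1\cap2}$ or $K^{1\triangle2}$, and the outcome is a \emph{linear} estimate $\E[d_{T'}(Z^\mu,Z^\nu)]\le C(T')\,\Wc_1(\mu_{T'},\nu_{T'})$ with $C(T')\to0$. This makes $\Psi_{T'}$ a contraction for $T'$ small, and a finite time-stepping argument (Section~\ref{sec:proof_strong_sol}) extends to $[0,T]$. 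So the architecture is: small-time linear contraction then patching, rather than your global quadratic Gronwall $\Wc_1(\Phi(\mu)_t,\Phi(\nu)_t)^2\le C\int_0^t\Wc_1(\mu_s,\nu_s)^2\,ds$. If you want to rescue your route, the right object to Gronwall is the aggregate $\E[\sum_k\1_{\{k\text{ coupled}\}}\|\Delta^k\|]$ (or its square), treating the inherited birth gap as a source term bounded by $\gammab M$ times the same aggregate---this is precisely what the paper does in Step~2 of the proof of Lemma~\ref{lemma compare of two solutions}, but in $L^1$.
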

	
	The proof of Theorem~\ref{thm:strong_sol} relies on a contraction argument, it is postponed to Section~\ref{sec:proof_strong_sol}. 	 
	Additionally, by using the estimates needed for this contraction argument, we can easily establish a companion stability property for McKean--Vlasov branching diffusion w.r.t.\ the initial condition and the coefficient functions.  See Appendix~\ref{sec:appendix} for more details.

	\begin{remark}
		In contrast to the classical literature on McKean--Vlasov diffusion, where the Lipschitz condition on the coefficients hold w.r.t.\ the metric $\Wc_2$  (see e.g. \cite{Sznitman 1991, Jourdain Meleard Woycznski 2008, Lacker 2018}),
		we perform our analysis with the metric $\Wc_1$ in this setting. 
		We refer to Remark \ref{remark reason for L 1} for a detailed discussion.
	\end{remark}

\subsection{Weak Solution and Propagation of Chaos}

	As in the classical SDE theory,  we can consider the notion of weak solution by letting the probability space as well as the Brownian motions and the Poisson random measures be part of the solution.
	It allows us to establish existence of McKean--Vlasov branching diffusion under relaxed conditions on the coefficients.
	Since there is no fixed probability space, we are given a measure $m_0 \in \Pc(E)$ as initial condition rather than a random variable $\xi$ with distribution $m_0.$

	\begin{definition} \label{def:weak_solution}
		A weak solution to the McKean--Vlasov branching diffusion SDE~\eqref{eq:SDE_MKVB}--\eqref{eq:SDE_MKVB2} with initial condition $m_0 \in \Pc(E)$
		is a term 
		$$	
			\alpha = \left(\Om, \Fc, \F, \P, Z, \mu, (\Wk,\Qk)_{k\in\K} \right)
		$$ 
		satisfying the following conditions:
		
		\vspace{0.5em}
		
		\noindent $\mathrm{(i)}$
		 $(\Om,\Fc,\F,\P)$ is a filtered probability space, equipped with a family of mutually independent,  Brownian motions $(\Wk)_{k\in\K}$ and Poisson random measures $(\Qk)_{k\in\K}$ with Lebesgue intensity measure  on $[0,T] \x [0, \gammab] \x [0,1]$.

		\vspace{0.5em}
		
		\noindent $\mathrm{(ii)}$ $Z$ is a $E$--valued, $\F$--adapted c\`adl\`ag process, such that $\P \circ Z_0^{-1} = m_0$.
			
		\vspace{0.5em}
		
		\noindent $\mathrm{(iii)}$ $\mu$ is a $\Pc(\Dc_E)$--valued random variable independent of $\left(Z_0, (\Wk,\Qk)_{k\in\K}\right)$ such that
			$$
				\mu_t = \Lc(Z_{t\wedge\cdot}\mid\mu_t),
				~~
				t \in [0,T].
			$$

		\vspace{0.5em}
		
		\noindent $\mathrm{(iv)}$ The process $Z_t = \sum_{k \in K_t} \delta_{(k, X^k_t)}$ satisfies the following SDE: for all $f \in C^2_b (\K \x \R^d, \R),$ $t\in [0,T],$
			\begin{multline} \label{eq:branching_MKSDE_weak}
				\< Z_t, f \>
				= 
				\<Z_0 ,f\>
				+
				\int_0^t \sum_{k\in K_s}\Lc f^k(s, \Xk_{s\wedge\cdot}, \mu_s)ds
				+
				\int_0^t \sum_{k\in K_s}
				Df^k(\Xk_s)\sigma(s,\Xk_{s\wedge\cdot},\mu_s)
				d\Wk_s \\
				+
				\int_{(0,t]\x [0,\gammab] \x [0,1]}
				\sum_{k\in K_{s-}}\sum_{ \ell \geq 0}
				\left(\sum_{i=1}^{ \ell} {f^{ki}} - f^k\right)(\Xk_s)\1_{[0,\gamma(s,\Xk_{s\wedge\cdot},\mu_s)]\x I_{\ell}(s,\Xk_{s\wedge\cdot},\mu_s)}(z)\Qk(ds,dz).
			\end{multline}	
	\end{definition}
	
	\begin{remark}
		Notice that $\mu$ is independent of $Z_0$ in Definition \ref{def:weak_solution}. 
		In particular,  $\mu_0 = \Lc(Z_{0 \wedge \cdot})\in\Pc(\Dc(E))$ is deterministic, completely characterized by $m_0\in\Pc(E).$
	\end{remark}
	
		Under the following  assumption, we can show existence of a weak solution to the McKean--Vlasov branching diffusion SDE. 
	Recall that $\Dc_E$ is equipped with the classical Skorokhod metric so that the continuity below has to be understood w.r.t.\ this metric. 
	
	\begin{assumption} \label{A.3}
	 The coefficient functions $(b, \sigma, \gamma, (p_{\ell})_{\ell \ge 0})$ and  $\sum_{ \ell \ge 0} \ell p_{\ell}$ are continuous in $(\xr, m).$
	\end{assumption}
	
	\begin{theorem}\label{thm:weak_existence}
	Let Assumptions \ref{A.1} and \ref{A.3} hold and $m_0\in\Pc_1(E).$ There exists a weak solution to the McKean--Vlasov branching diffusion SDE~\eqref{eq:SDE_MKVB}--\eqref{eq:SDE_MKVB2} in the sense of Definition \ref{def:weak_solution}.
	\end{theorem}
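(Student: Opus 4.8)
The plan is to prove weak existence by a compactness/tightness argument applied to a sequence of approximating processes, following the classical recipe for martingale problems. The main steps are as follows.

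\medskip

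\noindent\textbf{Step 1: Construction of approximants via a fixed-point-free scheme.} Since we cannot rely on a contraction argument under the relaxed Assumption~\ref{A.3}, I would instead build a sequence of solutions to branching diffusions with a \emph{frozen} environment and then iterate. Concretely, start from any $\mu^{(0)} \in \Pc_1(\Dc_E)$ (for instance the law of a pure branching diffusion with constant-in-$m$ coefficients, or simply the law of the initial condition extended to be constant). Given $\mu^{(n)}$, the coefficients $(b,\sigma,\gamma,(p_\ell)_\ell)(t,\cdot,\mu^{(n)})$ are bounded (Assumption~\ref{A.1}) and, for fixed environment, the branching diffusion SDE~\eqref{eq:branching_MKSDE_weak} is classically well-posed in law (cf.\ the remark citing Claisse~\cite[Prop.~2.1]{Claisse 2018}); let $Z^{(n+1)}$ be such a solution on some probability space, carrying the driving noises $(\Wk,\Qk)_{k\in\K}$, and set $\mu^{(n+1)} := \P \circ (Z^{(n+1)}_{\cdot\wedge\cdot})^{-1} \in \Pc(\Dc_E)$. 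Along the way one records, using Assumption~\ref{A.1}(ii) and a standard Gronwall estimate on $\E[\langle Z^{(n+1)}_t,\1\rangle]$, a \emph{uniform-in-$n$} bound $\sup_n \E[\sup_{t\le T}\langle Z^{(n+1)}_t,\1\rangle] \le C(T, m_0)$ whenever $m_0\in\Pc_1(E)$; this uses only boundedness of $\gamma$ and $M=\|\sum_\ell \ell p_\ell\|_\infty$, not any Lipschitz property.

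\medskip

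\noindent\textbf{Step 2: Tightness.} I would show that the laws of $(Z^{(n)}, \mu^{(n)}, (\Wk,\Qk)_{k\in\K})$ on $\Dc_E \times \Pc(\Dc_E) \times (\text{noise space})$ are tight. For the noise components tightness is automatic (a fixed distribution). For $Z^{(n)}$ in $\Dc_E$ with the Skorokhod topology, I would use the Aldous/Jakubowski tightness criterion together with the uniform moment bound: the martingale and finite-variation parts in~\eqref{eq:branching_MKSDE_weak} have uniformly controlled brackets/variations on any compact time interval because $b,\sigma,\gamma$ are bounded and the total jump activity is controlled by $\gammab$ and $M$; more precisely, one verifies the Aldous condition $\E[d_E(Z^{(n)}_{\tau+\theta}, Z^{(n)}_\tau)\wedge 1] \to 0$ uniformly over stopping times $\tau$ as $\theta\to 0$, and a compact containment condition from the moment bound (the number of particles stays in $\{0,\dots,N\}$ with high probability and each particle's position lives in a compact set with high probability by standard diffusion estimates). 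Since $\Pc(\Dc_E)$-valued random variables $\mu^{(n)}$ are in fact deterministic in this scheme, their tightness as random measures reduces to tightness of the sequence $(\mu^{(n)})_n$ in $\Pc(\Dc_E)$, which follows from the same compact containment / Aldous estimates applied to $Z^{(n)}$. Hence the joint family is tight; extract a weakly convergent subsequence with limit law $\mathbb{Q}$.

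\medskip

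\noindent\textbf{Step 3: Identification of the limit.} By Skorokhod's representation theorem, realize the convergent subsequence almost surely on a common probability space, obtaining $(Z^{(n)}, \mu^{(n)}, \text{noises}) \to (Z, \mu, \text{noises})$ a.s.\ in the product topology. I would then pass to the limit in the martingale problem associated to~\eqref{eq:branching_MKSDE_weak}: for fixed $f\in C^2_b(\K\times\R^d,\R)$, the process $M^{f,(n)}_t := \langle Z^{(n)}_t,f\rangle - \langle Z^{(n)}_0,f\rangle - \int_0^t \sum_{k\in K^{(n)}_s}[\Lc f^k + (\text{branching generator})](s,X^{(n),k}_{s\wedge\cdot},\mu^{(n)}_s)\,ds$ is a martingale; using the a.s.\ convergence, the continuity of the coefficients in $(\xr,m)$ (Assumption~\ref{A.3}), the uniform integrability supplied by the moment bound, and boundedness of $f$ and the coefficients, one shows $M^{f,(n)}_t \to M^{f}_t$ in $L^1$ (taking care, as in the remark (iv), that $t\mapsto\mu^{(n)}_t$ is continuous so there is no issue with $\mu_{s-}$ versus $\mu_s$, and that the Skorokhod-continuity of evaluation holds at all but countably many times), whence $M^f$ is a martingale under the limit. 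Then the consistency condition $\mu_t = \Lc(Z_{t\wedge\cdot}\mid\mu_t)$ passes to the limit because $\mu^{(n+1)}$ is built from $Z^{(n+1)}$ and $\mu^{(n)} \to \mu$, $Z^{(n)}\to Z$, so in the limit $\mu = \Lc(Z_{\cdot\wedge\cdot}\mid\mu)$ (here $\mu$ is deterministic, so the conditional law is just the law). Finally, a representation result for the martingale problem (the equivalence between~\eqref{eq:branching_MKSDE_weak} and the associated martingale problem, analogous to the classical one for jump-diffusions) allows us to realize the driving Brownian motions and Poisson random measures on a possibly enlarged space, producing a full weak solution term $\alpha$.

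\medskip

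\noindent\textbf{Main obstacle.} The delicate part is Step~3: ensuring that the branching/jump term passes to the limit. Convergence of $Z^{(n)}$ in the Skorokhod topology only gives convergence of $\langle Z^{(n)}_t, f\rangle$ at continuity points, and the sum over $K_{s-}$ of the jump integrand involves both $\gamma$ and the partition $(I_\ell)_\ell$, which depend on $\mu^{(n)}_s$; controlling the interplay between the (random, atomic) jump times of $Z^{(n)}$, the convergence $\mu^{(n)}_s\to\mu_s$, and the continuity of $p_\ell$ in $m$ requires care. I would handle it by working with the integrated (martingale-problem) formulation rather than the SDE directly — so that only time-integrals of continuous functionals appear, which are robust under Skorokhod convergence — and by invoking the uniform moment bound to upgrade a.s.\ convergence to $L^1$ convergence of the relevant compensators. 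A secondary technical point is verifying the martingale-problem/SDE equivalence in this infinite-dimensional ($E$-valued) branching setting, but this is a standard (if lengthy) representation argument and should be quotable from or parallel to the fixed-environment case.
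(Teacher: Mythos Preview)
Your proposal has a genuine gap in Step~3, stemming from an index shift inherent to the Picard scheme. Recall that by construction $Z^{(n+1)}$ solves the branching diffusion martingale problem with environment $\mu^{(n)}$, \emph{not} $\mu^{(n+1)}$. So the process you denote $M^{f,(n)}$, with $\mu^{(n)}_s$ inside the integrand, is \emph{not} a martingale; the actual martingale carries $\mu^{(n-1)}_s$. Now suppose you extract a subsequence $(n_k)$ along which $(Z^{(n_k)},\mu^{(n_k)})\to(Z,\mu)$. The relation $\mu^{(n_k)}=\Lc(Z^{(n_k)})$ passes to the limit and yields $\mu=\Lc(Z)$, which is the consistency condition. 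But to identify the dynamics of $Z$ you must pass to the limit in the martingale problem for $Z^{(n_k)}$, whose environment is $\mu^{(n_k-1)}$, and nothing guarantees that $\mu^{(n_k-1)}\to\mu$ along the same subsequence. By tightness you can further extract so that $\mu^{(n_k-1)}\to\nu$ for some $\nu$, but then the limit $Z$ solves the martingale problem with environment $\nu$, while $\Lc(Z)=\mu$; you obtain $\mu=\Psi(\nu)$ rather than the desired fixed point $\mu=\Psi(\mu)$. Without a contraction (which is exactly what you lack under Assumption~\ref{A.3}) or a separate argument forcing $\nu=\mu$, the scheme does not close.

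The paper circumvents this by proving Theorem~\ref{thm:weak_existence} as a corollary of the propagation of chaos (Theorem~\ref{thm:weak_existence_limit}): one builds, for each $n$, a system of $n$ interacting branching diffusions $(Z^{1},\dots,Z^{n})$ whose environment is their own empirical measure $\mu^n=\tfrac1n\sum_i\delta_{Z^{i}}$, so the dynamics and the consistency condition involve the \emph{same} object and there is no shift. Tightness of $\Pb^n:=\tfrac1n\sum_i\P^n\circ(Z^{i},\mu^n)^{-1}$ is obtained via Aldous' criterion (Proposition~\ref{prop:tightness}), and any limit point is shown to solve the martingale problem of Definition~\ref{def:_MK_SDE_mgl_pb}; Proposition~\ref{prop:equiv_martpb} then turns this into a weak solution. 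Your approach could be repaired by recasting it as a Schauder fixed-point argument --- show that $\Psi:\mu\mapsto\Lc(Z^\mu)$ is continuous (this is essentially your Step~3) and maps a suitable compact convex subset of $\Pc_1(\Dc_E)$ into itself --- but as written the iteration-plus-compactness route does not yield a fixed point.
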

	
	Theorem~\ref{thm:weak_existence} is an immediate consequence of a propagation of chaos property established in Theorem~\ref{thm:weak_existence_limit} below.  
	Namely, we consider a large number $n$ of branching diffusion processes interacting through their empirical distribution in the sense of Definition~\ref{def:weak_solution_n} below,  and we show that, when $n$ goes to infinity, it converges to a McKean--Vlasov branching diffusion in the weak sense.

	\begin{definition} \label{def:weak_solution_n}
		For each $n \ge 1$, a weak notion of $n$--interacting branching diffusion  with initial condition $m_0 \in \Pc(E)$ is a term 
		$$	
			\alpha_n = \left(\Om^n, \Fc^n, \F^n, \P^n,  (Z^{i})_{i = 1, \cdots, n}, (W^{i,k}, Q^{i,k})_{k\in\K, i = 1, \cdots, n} \right),
		$$
		satisfying the following conditions:
		
		\vspace{0.5em}	
		
		\noindent $\mathrm{(i)}$
		$(\Om^n,\Fc^n,\F^n,\P^n)$ is a filtered probability space, equipped with a family of mutually independent,  Brownian motions $(W^{k})_{k\in\K}$ and Poisson random measures $(Q^{k})_{k\in\K}$ with Lebesgue intensity measure on $[0,T] \x [0, \gammab] \x [0,1].$

		\vspace{0.5em}
				
		\noindent $\mathrm{(ii)}$ For each $i=1,\cdots, n$, $Z^{i}$ is a $E$-valued, $\F^n$-adapted c\`adl\`ag process, 
		such that $Z^{1}_0, \cdots, Z^{n}_0$ are i.i.d. with distribution $m_0.$

		\vspace{0.5em}
				
		\noindent $\mathrm{(iii)}$ For each $i=1, \cdots, n,$ the process $Z^{i}_t = \sum_{k \in K^{i}_t} \delta_{(k, X^{i,k}_t)}$ satisfies the following SDE:  for all $f \in C^2_b (\K \x \R^d, \R),$ $t\in[0,T],$
			\begin{multline} \label{eq:n_tree_SDE}
				\< Z^{i}_t, f \>
				= 
				\<Z^{i}_0 ,f\>
				+
				\int_0^t \sum_{k\in K^{i}_s}\Lc f^k(s, X^{i,k}_{s\wedge\cdot}, \mu^n_s)ds
				+
				\int_0^t \!\!\! \sum_{k\in K^{i}_s}
				Df^k(X^{i,k}_s)\sigma(s, X^{i,k}_{s\wedge\cdot},\mu^n_s)
				d W^{i,k}_s\\
				+
				\int_{(0,t]\x [0,\gammab] \x [0,1]} 
				\sum_{k\in K^{i}_{s-}}\sum_{ \ell \geq 0}
				\left(\sum_{j=1}^{ \ell} {f^{kj}} - f^k\right)(X^{i,k}_s)\1_{I_{\ell}(s,X^{i,k}_{s\wedge\cdot},\mu^n_{s-})\x[0,\gamma(s,X^{i,k}_{s\wedge\cdot},\mu^n_{s-})]}(z) Q^{i,k}(ds,dz),
			\end{multline}
			where $\mu^n_t$ corresponds to the empirical measure 
			\begin{equation*}
			\mu^n_t := \frac1n \sum_{i=1}^n \delta_{Z^{i}_{t \wedge \cdot}}.
			\end{equation*}
	\end{definition}

	\begin{theorem} \label{thm:weak_existence_limit}
		Let Assumptions \ref{A.1} and \ref{A.3} hold and $m_0\in\Pc_1(E).$ Then any sequence of distributions $(\P^n \circ (\mu^n)^{-1})_{n \ge 1}$ of $n$--interacting branching diffusion in the sense of Definition~\ref{def:weak_solution_n} admits a converging subsequence in $\Pc( \Pc_1(\Dc_E))$ and the limit identifies as the distribution $\P \circ \mu^{-1}$ of a McKean--Vlasov branching diffusion in the sense of Definition \ref{def:weak_solution}.
		\end{theorem}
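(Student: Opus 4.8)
The plan is to follow the standard compactness route for propagation of chaos: uniform estimates, tightness of the empirical measures, and identification of any limit point; Theorem~\ref{thm:weak_existence} then follows as the special case of the limiting term itself being a weak solution. Throughout, fix an $n$--interacting system $\alpha_n$ as in Definition~\ref{def:weak_solution_n} and write $\P^n,\E^n$. \emph{A priori bounds.} Taking $f\equiv\1$ in \eqref{eq:n_tree_SDE} (or a $C^2_b$ approximation) and using $\gamma\le\gammab$ and $M=\|\sum_\ell\ell p_\ell\|_\infty<\infty$ from Assumption~\ref{A.1}, one gets $\frac{d}{dt}\E^n[\#K^i_t]\le\gammab(M\vee1)\E^n[\#K^i_t]$, so Grönwall together with a Doob/total-variation bound yields
\[
 \sup_{n\ge1}\ \sup_{1\le i\le n}\ \E^n\Big[\sup_{0\le t\le T}\#K^i_t\Big]\ \le\ C\big(1+\E_{m_0}[\#K_0]\big)\ <\ \infty .
\]
A comparison of $t\mapsto\#K^i_t$ with an $n$--independent branching process run at the worst-case rate $\gammab$ then upgrades this to uniform integrability of $\{\sup_t\#K^{i,n}_t:n\ge1,\,1\le i\le n\}$; since $\langle\mu^n,\sup_t\langle\cdot_t,\1\rangle\rangle=\frac1n\sum_{i=1}^n\sup_t\#K^i_t$, this controls the $\Wc_1$--size of $\mu^n$ uniformly and with the right uniform integrability to work in $\Pc_1(\Dc_E)$ rather than merely $\Pc(\Dc_E)$.

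Next I would establish tightness of $(\P^n\circ(\mu^n)^{-1})_n$ in $\Pc(\Pc_1(\Dc_E))$. It suffices that (a) the mean measures $\bar\mu^n:=\E^n[\mu^n]=\frac1n\sum_i\P^n\circ(Z^i)^{-1}$ be tight in $\Pc(\Dc_E)$ and (b) the $\Wc_1$--mass be uniformly integrable, which is the previous step. For (a) it is enough that the individual laws $\P^n\circ(Z^i)^{-1}$ be tight in the Skorokhod space $\Dc_E$, uniformly in $n,i$: from the semimartingale decomposition read off \eqref{eq:n_tree_SDE}, for each $f\in C^2_b(\K\x\R^d,\R)$ the real process $\langle Z^i_\cdot,f\rangle$ has drift and jump-compensator bounded in terms of $\#K^i_\cdot$ and the bounded coefficients, so Aldous--Rebolledo gives tightness of $(\langle Z^i_\cdot,f\rangle)_n$ in $D([0,T],\R)$; combined with a compact-containment condition in $E$ — relatively compact subsets of $E$ consist of measures with bounded mass, labels in a finite subset of $\K$, and positions in a compact of $\R^d$, all ensured with probability $\ge1-\eta$ uniformly in $n$ by the population bound, by $M<\infty$, and by tightness of the diffusive motions ($b,\sigma$ bounded) — Jakubowski's criterion yields tightness in $\Dc_E$.

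For the identification step, extract a subsequence along which $\P^n\circ(\mu^n)^{-1}\to Q$ in $\Pc(\Pc_1(\Dc_E))$. Write $\Gc^m f(r,z)$ for the full generator in environment $m$ — the diffusive part $\sum_{k\in K_r(z)}\Lc f^k(r,x^k_{r\wedge\cdot},m_r)$ plus the branching part $\sum_{k\in K_r(z)}\gamma\sum_{\ell\ge0}p_\ell\big(\sum_{i=1}^{\ell}f^{ki}-f^k\big)(x^k_r)$ with coefficients evaluated at $m_r$ — and for $f\in C^2_b(\K\x\R^d,\R)$, $0\le s<t\le T$, and $g$ bounded continuous on $\Dc_E$ depending only on $\pi_s z$, set
\[
 \Phi(m)\ :=\ \int_{\Dc_E}g(\pi_s z)\Big(\langle z_t,f\rangle-\langle z_s,f\rangle-\int_s^t\Gc^m f(r,z)\,dr\Big)m(dz),\qquad m\in\Pc_1(\Dc_E),
\]
which is well defined as the integrand is bounded by $C(f,b,\sigma,\gammab,M)\sup_r\langle z_r,\1\rangle$. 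Since each $Z^i$ solves \eqref{eq:n_tree_SDE} with environment $\mu^n$, the bracket at $z=Z^i$ is the increment over $[s,t]$ of an $\F^n$--martingale $M^i$ coming from the Brownian and compensated Poisson integrals, so $\E^n[\Phi(\mu^n)]=\frac1n\sum_i\E^n[g(\pi_s Z^i)(M^i_t-M^i_s)]=0$; using the orthogonality of the increments $(M^i)_i$ (the driving noises being mutually independent) and the integrability estimates above (after a litter-size truncation to secure square-integrability), one obtains $\Phi(\mu^n)\to0$ in $\P^n$--probability. On the other hand $\Phi$ is continuous on $\Pc_1(\Dc_E)$: the coefficients are continuous in $(\xr,m)$ by Assumption~\ref{A.3}, $\Wc_1$--convergence $m^j\to m$ pushes forward to $m^j_r\to m_r$ for a.e.\ $r$, the integrand grows at most linearly in $\sup_r\langle z_r,\1\rangle$, the path evaluations $z\mapsto\langle z_t,f\rangle$ are $\mu$--a.s.\ continuous for all but countably many $t$ (as in the prelimit the limit charges no fixed-time discontinuity), and one concludes by a Vitali argument using $\int\sup_r\langle z_r,\1\rangle\,m^j(dz)\to\int\sup_r\langle z_r,\1\rangle\,m(dz)$. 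As $m\mapsto|\Phi(m)|\wedge1$ is bounded continuous, $\E_Q[|\Phi(\mu)|\wedge1]=\lim_n\E^n[|\Phi(\mu^n)|\wedge1]=0$; letting $g$ range over a countable determining class and $s$ over a countable dense set shows that, for $Q$--a.e.\ $\mu$, the canonical process on $\Dc_E$ with initial law $m_0$ (which passes to the limit since $\P^n\circ(Z^i_0)^{-1}=m_0$) solves, under $\mu$, the martingale problem with generator $\Gc^\mu$.

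It remains to turn this into a weak solution in the sense of Definition~\ref{def:weak_solution}: take $\mu\sim Q$ and, conditionally on $\mu$, realize the canonical process $Z$ with law $\mu$ solving the $\Gc^\mu$--martingale problem; this is a branching diffusion in the fixed environment $\mu$, so by the equivalence between the martingale problem and the SDE formulation for branching diffusions (see e.g.\ Claisse~\cite[Proposition~2.1]{Claisse 2018}) one can, enlarging the probability space if needed, construct mutually independent Brownian motions $(\Wk)_{k\in\K}$ and Poisson random measures $(\Qk)_{k\in\K}$, independent of $(Z_0,\mu)$, such that \eqref{eq:branching_MKSDE_weak} holds; the consistency condition $\mu_t=\Lc(Z_{t\wedge\cdot}\mid\mu_t)$ then follows from $\mu=\Lc(Z\mid\mu)$ together with the non-anticipativity $\Lc(Z_{t\wedge\cdot}\mid\mu)=\Lc(Z_{t\wedge\cdot}\mid\mu_t)$, inherited from the prelimit where $\mu^n_t$ is $\F^n_t$--measurable. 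This yields both Theorem~\ref{thm:weak_existence} and the assertion of Theorem~\ref{thm:weak_existence_limit}. The main obstacle, I expect, is to push all estimates through at the level of $\Pc_1(\Dc_E)$ — i.e.\ the uniform integrability of the population sizes under only the first--moment control of Assumption~\ref{A.1}, which drives both the comparison argument in the a priori bounds and the truncation in the identification; secondary difficulties are the compact-containment condition in the label-decorated state space $E$, and the reconstruction of the driving noises together with the verification of the non-anticipative consistency relation $\mu_t=\Lc(Z_{t\wedge\cdot}\mid\mu_t)$.
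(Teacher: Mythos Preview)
Your proposal is correct and follows the same overall strategy as the paper: uniform moment bounds on the population, tightness of the empirical measures via an Aldous-type argument, and identification of any limit point through the martingale problem, exploiting the orthogonality of the per-tree martingales to get the crucial $1/n$ decay. The execution differs in a few places worth noting. For tightness, you go through Jakubowski's criterion (compact containment in $E$ plus tightness of the scalar processes $\langle Z^i,f\rangle$ via Aldous--Rebolledo), whereas the paper applies Aldous' criterion directly to $Z^{1}$ in the metric $d_E$; the latter sidesteps the compact-containment argument in the labelled space $E$, which is slightly delicate because bounded population does not by itself confine labels to a finite subset of $\K$ (one must also control the total number of branching events). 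For the identification, you test with the linear functionals $e\mapsto\langle e,f\rangle$ and need a litter-size truncation to secure square-integrability, while the paper uses bounded nonlinear test functions $\Phi_\varphi(e)=\Phi(\langle e,\varphi\rangle)$ with $\Phi\in C^2_b(\R)$; this choice makes the quadratic-variation bound immediate (dominated by $C\int\#K_r\,dr$) and also matches the martingale-problem formulation that the paper shows to be equivalent to the weak SDE (your reconstruction of the noises from the linear martingale problem is correct but relies on an equivalence that is not quite the statement of \cite[Proposition~2.1]{Claisse 2018}). Finally, the paper works on the enlarged canonical space $\Omb=\Dc_E\times\Pc(\Dc_E)$ with $\Pb^n=\frac1n\sum_i\P^n\circ(Z^i,\mu^n)^{-1}$, which makes the consistency relation $\mub_t=\Lc(\Zb_{t\wedge\cdot}\mid\mub_t)$ a one-line limit rather than a separate non-anticipativity argument. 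None of these differences is essential; both routes reach the same conclusion.
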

		
		The proof of Theorem~\ref{thm:weak_existence_limit} is postponed to Section~\ref{sec:proof_weak_sol}. 
		It relies on the weak convergence of solutions to appropriate martingale problems,  equivalent to the notion of weak solutions.

\begin{remark}
 Under Assumptions \ref{A.1} and \ref{A.2}, there exists a unique strong solution $Z$ and so, by a  Yamada--Watanabe--like theorem, weak uniqueness holds and  Theorem~\ref{thm:weak_existence_limit} reduces to $\mu^n \longrightarrow \P \circ Z^{-1}$ in distribution. 
\end{remark}

\section{Strong Formulation and Well-Posedness}
\label{sec:proof_strong}

We consider in this section the strong formulation of the McKean--Vlasov branching diffusion processes. 
More precisely, we provide the proof of the strong existence and uniqueness result Theorem~\ref{thm:strong_sol} in Section~\ref{sec:proof_strong_sol}.  
To this end,  we start with a series of preliminary technical lemmas in Section~\ref{sec:technical}.
Throughout this section, we consider a fixed probability space $(\Om, \Fc, \P)$ equipped with an initial condition $\xi$ and a family of independent,  Brownian motions $(\Wk)_{k \in \K}$ and Poisson random measures $(\Qk)_{k \in \K}$ on $[0,T] \x [0, \gammab] \x [0,1]$ with Lebesgue intensity measure. We assume that Assumptions \ref{A.1} and \ref{A.2} hold, and further that $\E[\langle\xi,\1{}\rangle] = \E[\# K_0]<+\infty.$

\subsection{Technical Lemmas}\label{sec:technical}

 We start by studying the solution to SDE~\eqref{eq:SDE_MKVB} when the environment measure is fixed. 
	More precisely, assume that we are given a deterministic $\mu \in \Pc_1(\Dc_E)$ and consider $Z_t=\sum_{k\in K_t} {\delta_{(k,\Xk_t)}}$ satisfying for all $ f \in C^2_b (\K \x \R^d, \R),$ $t\in[0,T],$
	\begin{multline} \label{eq:branch_SDE_classical}
		\< Z_t, f \>
		= 
		\<\xi ,f\>
		+
		\int_0^t \sum_{k\in K_s}\Lc f^k(s, \Xk_{s\wedge\cdot}, \mu_s)ds
		+
		\int_0^t \sum_{k\in K_s}
		Df^k(\Xk_s)\sigma(s,\Xk_{s\wedge\cdot},\mu_s)
		d\Wk_s \\
		+
		\int_{(0,t]\x [0,\gammab] \x [0,1]}
		\sum_{k\in K_{s-}}\sum_{ \ell \geq 0}
		\left(\sum_{i=1}^{ \ell} {f^{ki}} - f^k\right)(\Xk_s)\1_{[0,\gamma(s,\Xk_{s\wedge\cdot},\mu_s)]\x I_{\ell}(s,\Xk_{s\wedge\cdot},\mu_s)}(z)\Qk(ds,dz).
	\end{multline}

	By a simple extension of a result from Claisse \cite[Proposition 2.1]{Claisse 2018}, existence and uniqueness of the strong solution holds for the above SDE.
	\begin{lemma} \label{lemma unique existence fixed flow of law}
	 \label{lemma integrability}
		Let $\mu\in\Pc_1(\Dc_E).$
		Then there exists a unique $E$-valued c\`adl\`ag process $Z$ adapted to the (augmented) natural filtration of $(\xi,(W_k,Q_k)_{k\in\K})$  satisfying the branching diffusion SDE~\eqref{eq:branch_SDE_classical}. 
		In addition, it holds
		 \begin{align*}
        \E\left[
            \sum_{k\in\K}\sup_{0\leq t\leq T}\1_{k\in K_t}
        \right]
        \leq
        \E\left[\#K_0\right]e^{\gammab M T}.
    \end{align*}
	\end{lemma}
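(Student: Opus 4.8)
The plan is to reduce the claim to the classical well-posedness of branching diffusion processes with a fixed coefficient field, in the spirit of Claisse \cite[Proposition 2.1]{Claisse 2018}, the only difference being the (harmless) path-dependence of the coefficients. Since $\mu\in\Pc_1(\Dc_E)$ is frozen, Assumptions \ref{A.1} and \ref{A.2} guarantee that $(t,\xr)\mapsto(b,\sigma,\gamma,(p_\ell)_{\ell\ge0})(t,\xr,\mu_t)$ is bounded and that $b(t,\cdot,\mu_t),\sigma(t,\cdot,\mu_t),\gamma(t,\cdot,\mu_t)$ are Lipschitz on $\Cc^d$ for the sup-norm, uniformly in $t$; hence each particle's movement equation \eqref{eq:SDE} is a path-dependent SDE with bounded Lipschitz coefficients, for which strong existence and pathwise uniqueness hold by a standard Picard argument.

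I would then build $(Z_t)_{t\in[0,T]}$ recursively along the Ulam--Harris--Neveu tree. Initialise $K_0$ and $(X^k_0)_{k\in K_0}$ from $\xi$. Given a particle $k$ with known birth time $S_k$ and birth position, solve \eqref{eq:SDE} on $[S_k,T]$ to get the candidate trajectory $X^k$; let $T_k$ be the first atom of $Q^k$ in $\{(t,z,u):t>S_k,\ z\le\gamma(t,X^k_{t\wedge\cdot},\mu_t)\}$, read off the mark $U_k$, pick $\ell$ with $U_k\in I_\ell(T_k,X^k_{T_k\wedge\cdot},\mu_{T_k})$, and let $\{k1,\dots,k\ell\}$ be born at $S_{ki}:=T_k$ with $X^{ki}_{T_k}:=X^k_{T_k-}$. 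Setting $K_t:=\{k:S_k\le t<T_k\}$ and $Z_t:=\sum_{k\in K_t}\delta_{(k,X^k_t)}$ defines an $E$-valued c\`adl\`ag process adapted to the augmented natural filtration of $(\xi,(W_k,Q_k)_{k\in\K})$, provided the recursion does not explode on $[0,T]$; this non-explosion is a by-product of the a priori estimate below. That $Z$ then satisfies \eqref{eq:branch_SDE_classical} follows by applying It\^o's formula to $f^k(X^k_t)$ on each inter-branching interval and summing over $k\in K_t$, the jump term against $Q^k$ recording precisely the replacement of $k$ by its offspring at $T_k$. Uniqueness is pathwise and propagates by induction over successive branching events: on $\{S_k\le t<T_k\}$ any solution must have $X^k$ solving \eqref{eq:SDE} (unique), and $T_k,U_k$, hence the whole genealogy, are deterministic functionals of $\xi$ and $(W_k,Q_k)_{k\in\K}$.

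For the quantitative bound, put $n_t:=\#\{k\in\K:S_k\le t\}$, the number of particles born up to time $t$, so that $n_0=\#K_0$ and, up to a $\P$-null set, $n_T=\sum_{k\in\K}\sup_{0\le t\le T}\1_{k\in K_t}$. Let $\tau_p$ denote the time of the $p$-th branching event. By construction $n$ is non-decreasing, increases by $\ell$ at a branching event producing $\ell$ children, and
\[
n_t-\int_0^t\sum_{k\in K_s}\gamma(s,X^k_{s\wedge\cdot},\mu_s)\Big(\sum_{\ell\ge0}\ell\,p_\ell(s,X^k_{s\wedge\cdot},\mu_s)\Big)ds
\]
is a local martingale which, once stopped at $\tau_p$, is a true martingale (at most $p$ jumps occur before $\tau_p$, and $\E[n_{\tau_p}]\le\E[\#K_0]+pM<\infty$ since each jump has conditional mean $\sum_\ell\ell p_\ell\le M$). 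Using $\gamma\le\gammab$, $\sum_\ell\ell p_\ell\le M$ and $\#K_s\le n_s$, taking expectations and applying Gronwall's lemma gives, uniformly in $p$,
\[
\E[n_{t\wedge\tau_p}]\le\E[\#K_0]+\gammab M\int_0^t\E[n_{s\wedge\tau_p}]\,ds\le\E[\#K_0]\,e^{\gammab M t}.
\]
Letting $p\to\infty$, monotone convergence yields $\E\big[\lim_p n_{T\wedge\tau_p}\big]\le\E[\#K_0]e^{\gammab M T}<\infty$, hence $\lim_p n_{T\wedge\tau_p}<\infty$ a.s.; were explosion to occur with positive probability one would have $n_{T\wedge\tau_p}=n_{\tau_p}\to\infty$ there, a contradiction. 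Thus $\tau_p>T$ eventually, the recursion does reach $T$, and $n_T=n_{T\wedge\tau_p}$ for $p$ large, which gives $\E[n_T]\le\E[\#K_0]e^{\gammab M T}$ as claimed. The main (mild) obstacle is the apparent circularity between defining $Z$ on all of $[0,T]$ and the estimate used to justify it; it is dissolved exactly by running the a priori estimate along the localising sequence $(\tau_p)$ and only afterwards letting $p\to\infty$.
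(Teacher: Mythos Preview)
Your argument is correct and follows essentially the same route as the paper: construct $Z$ inductively between branching events using strong well-posedness of the path-dependent SDE \eqref{eq:SDE}, then obtain the moment bound by writing the total-birth process as a compensated Poisson integral, localising, applying Gronwall, and passing to the limit. The only cosmetic differences are that the paper localises with $\tau_n:=\inf\{t:\bar N_t\ge n\}$ rather than the $p$-th branching time, and closes with Fatou's lemma instead of monotone convergence.
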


\begin{proof}
The proof of existence and uniqueness relies on the fact that the Lipschitz assumption on $b$ and $\sigma$ ensures existence and uniqueness of the strong solution to SDE~\eqref{eq:SDE} characterizing the movement of each particle. 
Then we can construct the process step by step by considering the successive branching times. 
It remains to rule out explosion: This follows from the boundedness assumption on  $\gamma$ and $\sum_{\ell\geq0} \ell p_\ell$ which ensures that the population size has finite moments (see below) and thus remains finite.
We refer to~\cite[Proposition 2.1]{Claisse 2018} for more details.

	\vspace{0.5em}

Let us turn now to the moment estimate. Denote by $\bar{N}_t:=\sum_{k\in\K}\sup_{0\leq t\leq T}\1_{k\in K_t}$ the total number of particles having been alive in the time interval $[0,T],$ and observe that
it can be viewed as a modified branching process where particles give birth to new ones without dying. Then
    \begin{align*}
        \bar{N}_t
        =
        \#K_0
        +
        \int_{(0,t]\x[0,\gammab] \x [0,1]}
            \sum_{k\in K_{s-}}\sum_{\ell\geq0}\ell \1_{I_\ell(s,\Xk_{s\wedge\cdot},\mu_s)\x[0,\gamma(s,\Xk_{s\wedge\cdot},\mu_s)]}(z)
        \Qk(ds,dz).
    \end{align*}
    We introduce the stopping times $\tau_n:=\inf\{t\geq0:\bar{N}_t\geq n\}$. It holds
    \begin{align*}
        \E\left[
            \bar{N}_{T\wedge\tau_n}
        \right]
        & = 
        \E\left[\#K_0\right]
        +
        \E\left[
            \int_0^{T\wedge\tau_n}
                \sum_{k\in K_t}\sum_{\ell\geq0}\ell p_\ell(t,\Xk_{t\wedge\cdot},\mu_t)\gamma(t,\Xk_{t\wedge\cdot},\mu_t)
            dt
        \right]\\
        \leq &
        \E\left[\#K_0\right]
        +
        \gammab M \E\left[
            \int_0^{T\wedge\tau_n}\#{K_t}dt
        \right]\\
        \leq &
        \E\left[\#K_0\right]
        +
        \gammab M \E\left[
            \int_0^T \bar{N}_{t\wedge\tau_n}dt
        \right].
    \end{align*}
    By Gr\" onwall's Lemma, we deduce that 
	 \begin{equation*}
        \E\left[
            \bar{N}_{T\wedge\tau_n}
        \right]
        \leq
        \E\left[\#K_0\right]e^{\gammab M T}.
    \end{equation*}   
    The conclusion follows immediately from Fatou's Lemma.
\end{proof}

We now provide a stability result for the branching diffusion solution to SDE~\eqref{eq:branch_SDE_classical} w.r.t.\ the environment measure. It is the key to the contraction argument used in the proof of Theorem~\ref{thm:strong_sol} and  the main technical difficulty.

\begin{lemma}
	\label{lemma compare of two solutions}
	Let $\mu^1,\mu^2\in\Pc_1(\Dc_E).$ Denote by $Z^i$ the solution to SDE \eqref{eq:branch_SDE_classical} given $\mu^i$, $Z^i_0=\xi$ for $i=1,2$ respectively. 	
	Then there exist constants $c_d > 0$, $c_w> 0$ depending on $L$, $\gammab$, $M$ and $M'$ such that for all $T' \in [0,T]$  satisfying $T'+\sqrt{T'}<1/c_d,$ 
	\begin{align*}
		\E\left[
			d_{T'}(Z^1,Z^2)
		\right]
		\leq
		\frac{c_w(T'+\sqrt{T'})}{1-c_d(T'+\sqrt{T'})} \E\left[\# K_0\right]e^{\gammab M T'}\Wc_1(\mu^1_{T'},\mu^2_{T'}).
	\end{align*}
\end{lemma}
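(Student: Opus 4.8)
The plan is to couple the two branching diffusions $Z^1$ and $Z^2$ on the same probability space (they already are, since both are driven by the same $(\xi, (W^k, Q^k)_{k\in\K})$) and to control $\E[d_{T'}(Z^1,Z^2)]$ by tracking two sources of discrepancy separately: the discrepancy in positions of particles that are alive in \emph{both} trees, and the discrepancy in the set of alive labels, i.e.\ the number of particles whose birth/death history differs. Recalling the definition \eqref{eq:def_d_E} of $d_E$, I would write, for each $t\in[0,T']$,
\begin{align*}
	d_E(Z^1_t, Z^2_t)
	~\le~
	\sum_{k\in K^1_t\cap K^2_t}\big(|X^{1,k}_t - X^{2,k}_t|\wedge 1\big)
	~+~
	\#\big(K^1_t \triangle K^2_t\big),
\end{align*}
and take the supremum over $t\le T'$ and then expectation. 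The first term is handled by a Gronwall-type estimate on $\E\big[\sup_{s\le t}|X^{1,k}_s - X^{2,k}_s|\wedge 1\big]$ using Assumption~\ref{A.2}(i): the drift and diffusion coefficients evaluated along the two environments $\mu^1, \mu^2$ differ by at most $L\big(\|X^{1,k}_{s\wedge\cdot} - X^{2,k}_{s\wedge\cdot}\| + \Wc_1(\mu^1_s,\mu^2_s)\big)$, and a standard Burkholder--Davis--Gundy plus Gronwall argument on the interval $[0,T']$ produces the factor $T' + \sqrt{T'}$ (the $\sqrt{T'}$ coming from the stochastic integral) times $\Wc_1(\mu^1_{T'},\mu^2_{T'})$, up to the $\sup_{s\le t}$ self-referential term that will be absorbed on the left. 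This must be done carefully for a label $k$ conditioned on being alive in both trees, so one really sums over all $k\in\K$ of $\E\big[\1_{k\in K^1_t\cap K^2_t}\sup_{s\le t}|X^{1,k}_s-X^{2,k}_s|\wedge1\big]$ and uses Lemma~\ref{lemma integrability} to bound the number of such labels by $\E[\#K_0]e^{\gammab M T'}$.

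The second term, $\E[\sup_{t\le T'}\#(K^1_t\triangle K^2_t)]$, is the main obstacle. A label $k$ lies in $K^1_t\triangle K^2_t$ only if, somewhere before time $t$, the two Poisson-driven branching mechanisms diverged along the common ancestry: either the death clocks $\1_{[0,\gamma(s,\cdot,\mu^i_s)]}(z)$ fired for one environment and not the other (a mismatch of measure $|\gamma(s,X_{s\wedge\cdot},\mu^1_s)-\gamma(s,X_{s\wedge\cdot},\mu^2_s)|$ per unit time for each alive particle), or a death occurred in both but the offspring number selected by $U_k\in I_\ell(s,\cdot,\mu^i_s)$ differed, whose probability is controlled by $\sum_\ell |p_\ell(s,\cdot,\mu^1_s)-p_\ell(s,\cdot,\mu^2_s)|$, or the parent positions already differed so the ancestor entered the mismatch set. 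Using the compensator of $Q^k$ and Assumption~\ref{A.2}, the expected rate at which new labels enter the symmetric difference is bounded by a constant (depending on $L$, $M'$) times $\big(\E[\|X^{1,k}_{s\wedge\cdot}-X^{2,k}_{s\wedge\cdot}\|\wedge1] + \Wc_1(\mu^1_s,\mu^2_s)\big)$ summed over alive particles, plus a term proportional (with constant $\gammab M$ essentially) to $\#(K^1_{s-}\triangle K^2_{s-})$ itself, since once a label mismatches so do all of its descendants and they keep branching at rate $\le \gammab M$. Writing a differential inequality for $\phi(t) := \E[\sup_{s\le t}\#(K^1_s\triangle K^2_s)]$ and for $\psi(t):=\sum_k\E[\1_{k\in K^1_t\cap K^2_t}\sup_{s\le t}|X^{1,k}_s-X^{2,k}_s|\wedge1]$, one gets a coupled system of the form $\phi(T')\le c\,(T'+\sqrt{T'})\big(\psi(T') + \E[\#K_0]e^{\gammab M T'}\Wc_1(\mu^1_{T'},\mu^2_{T'})\big) + c\int_0^{T'}\phi(s)\,ds$ and similarly for $\psi$, with the crucial point that the $\Wc_1$-contribution carries the prefactor $T'+\sqrt{T'}$.

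Finally I would combine the two estimates: $\E[d_{T'}(Z^1,Z^2)] \le \psi(T') + \phi(T')$, feed in the Gronwall bounds, and collect the self-referential terms. The terms proportional to $\E[d_{T'}(Z^1,Z^2)]$ on the right carry a coefficient $c_d(T'+\sqrt{T'})$, so under the hypothesis $T'+\sqrt{T'} < 1/c_d$ they can be moved to the left-hand side, yielding
\begin{align*}
	\E\big[d_{T'}(Z^1,Z^2)\big]
	~\le~
	\frac{c_w(T'+\sqrt{T'})}{1 - c_d(T'+\sqrt{T'})}\,\E[\#K_0]\,e^{\gammab M T'}\,\Wc_1(\mu^1_{T'},\mu^2_{T'}),
\end{align*}
with $c_d, c_w$ depending only on $L, \gammab, M, M'$. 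The delicate bookkeeping is in step two: making rigorous the claim that mismatched labels are created at a rate controlled by the coefficient discrepancies and then propagate only at the bounded branching rate, which is why the metric $\Wc_1$ (rather than $\Wc_2$) is natural here — the number of mismatched particles is an $L^1$-type quantity and the offspring-number discrepancy is summed via $M' = \sum_\ell \ell C_\ell$.
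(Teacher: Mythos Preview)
Your proposal is correct and follows essentially the same route as the paper: decompose $d_E(Z^1_t,Z^2_t)$ into the label-mismatch term $\#(K^1_t\triangle K^2_t)$ and the position-mismatch term over $K^1_t\cap K^2_t$, bound each using Poisson compensators, BDG, and the Lipschitz conditions, then absorb the self-referential term on the left using smallness of $T'+\sqrt{T'}$. The one technical point you gloss over is the position discrepancy at the birth time $\bar S^k := S^{1,k}\vee S^{2,k}$ of a particle $k$ entering $K^1\cap K^2$: the paper handles $\|X^{1,k}_{\bar S^k\wedge\cdot}-X^{2,k}_{\bar S^k\wedge\cdot}\|$ by splitting on whether the parent $k^-$ was in $K^{1\cap 2}_{\bar S^k-}$ (in which case the parent's position discrepancy feeds in, weighted by the expected offspring count $\le \gammab M$) or in $K^{1\triangle 2}_{\bar S^k-}$ (in which case one simply uses the crude bound $\|\cdot\|\le 1$ and recycles the $J^1+J^2$ estimate), and this is precisely where the coupling between your $\phi$ and $\psi$ is made explicit.
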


\begin{proof}
    Fix some $T'\in[0,T]$ to be determined later. For $t\in[0,T']$, consider the following representations:
    \begin{align*}
        Z^1_t = \sum_{k\in K^1_t}\delta_{(k,\Xmuk_t)}
        ~~\text{and}~~
        Z^2_t = \sum_{k\in K^2_t}\delta_{(k,\Xnuk_t)}.
    \end{align*}
  Let us denote $K^{1\cap2}_t := K^{1}_t \cap K^{2}_t$ and $K^{1\triangle2}_t := K^{1}_t \triangle K^{2}_t.$
    Recall that
    \begin{equation}\label{eq:lem_distance}
        d_{T'}(Z^1,Z^2)
        = \sup_{0\le t\le T'} \left\{\#K^{1\triangle 2}_t +
        \sum_{k\in K^{1\cap2}_t}\left|\Xmuk_t-\Xnuk_t\right|\wedge1\right\}.
    \end{equation}

    \textit{Step 1.}
    We start by estimating the first term in \eqref{eq:lem_distance} counting the number of particles with distinct labels in $Z^1$ and $Z^2.$
For simplicity, we denote for $i = 1,2$, $k\in\K$ and $t\in[0,T]$, $\gamma^{i,k}_t := \gamma(t,X^{i,k}_{t\wedge\cdot},\mu^i_t)$, $I^{i,k}_{\ell,t} := I_\ell(t,X^{i,k}_{t\wedge\cdot},\mu^i_t)$, and $p^{i,k}_{\ell,t}:=p_\ell(t,X^{i,k}_{t\wedge\cdot},\mu^i_t).$
    We begin with the following observation:
    \begin{align}
        \#K^{1\triangle 2}_t\leq J^1_t+J^2_t+J^3_t+J^4_t+J^5_t,
        \label{L}
    \end{align}
    where
    $$
        J^1_t = \int_{(0,t]\x[0,\gammab] \x [0,1]}
            \sum_{k\in K^1_{s-}\setminus K^2_{s-}} \sum_{\ell\geq0} \ell \1_{I^{1,k}_{\ell,s}\x[0,\gamma^{1,k}_s]}(z)
        \Qk(ds,dz),
    $$
    $$
        J^2_t = \int_{(0,t]\x[0,\gammab] \x [0,1]}
            \sum_{k\in K^2_{s-}\setminus K^1_{s-}} \sum_{\ell\geq0} \ell \1_{I^{2,k}_{\ell,s}\x[0,\gamma^{2,k}_s]}(z)
        \Qk(ds,dz),
    $$
    $$
        J^3_t = \int_{(0,t]\x[0,\gammab] \x [0,1]}
            \sum_{k\in K^{1\cap 2}_{s-}}\1_{\gamma^{1,k}_s>\gamma^{2,k}_s} \sum_{\ell\geq0} (\ell+1)\1_{I^{1,k}_{\ell,s}\x (\gamma^{2,k}_s, \gamma^{1,k}_s]}(z)
        \Qk(ds,dz),
    $$
    $$
        J^4_t = \int_{(0,t]\x[0,\gammab] \x [0,1]}
            \sum_{k\in K^{1\cap 2}_{s-}}\1_{\gamma^{1,k}_s<\gamma^{2,k}_s} \sum_{\ell\geq0} (\ell+1)\1_{I^{2,k}_{\ell,s}\x (\gamma^{1,k}_s, \gamma^{2,k}_s]}(z)
        \Qk(ds,dz),
    $$
    and
    $$
        J^5_t = \int_{(0,t]\x[0,\gammab] \x [0,1]}
            \sum_{k\in K^{1\cap 2}_{s-}} \sum_{\ell,\ell '\geq0}\left|\ell-\ell '\right| \1_{I^{1,k}_{\ell,s}\cap I^{2,k}_{\ell',s}\x [0,\gamma^{1,k}_s\wedge\gamma^{2,k}_s]}(z)
        \Qk(ds,dz).
    $$
     To obtain the above estimation, we distinguish between particles born from a parent in the common set $K^{1\cap2}_t$ or not. In the latter case, we assume that all newly born particles also lie in the set $K^{1\triangle 2}_t$. Then $J^1_t$ and $J^2_t$ correspond to the case when parents are from $K^1_t\setminus K^2_t$ and $K^2_t\setminus K^1_t$ respectively.  In the former case, when parents are from $K^{1\cap2}_t,$ the terms $J^3_t$ and $J^4_t$ take care of the case when a particle in one tree branches while the other particle sharing the same label does not. While the last term $J_t^5$  concerns particles in two trees sharing a common label branching simultaneously, but giving birth to different number of progeny.
    
    Let us deal with each term in~\eqref{L} separately. Firstly, we have
    \begin{align*}
        \E\left[
            \sup_{0\leq t\leq T'} J^1_t 
        \right]
       & = 
        \E\left[
            \int_{(0,T']\x[0,\gammab] \x [0,1]}
                \sum_{k\in K^1_{t-}\setminus K^2_{t-}} \sum_{\ell\geq 0} \ell \1_{I^{1,k}_{\ell,t}\x[0,\gamma^{1,k}_t]}(z)
            \Qk(dt,dz)
        \right]\\
        & = 
        \E\left[
            \int_0^{T'}
                \sum_{k\in K^1_t\setminus K^2_t}\gamma^{1,k}_t\sum_{\ell\geq 0} \ell p^{1,k}_{\ell,t}
            dt
        \right]\\
        & \leq
        \gammab M\E\left[
            \int_0^{T'}
                \# (K^1_t\setminus K^2_t)
            dt
        \right].
    \end{align*}
    Similarly, we have 
    \begin{equation*}
    \E\left[\sup_{0\leq t\leq T'}J^2_t\right]\leq\gammab M\E\left[\int_0^{T'}\#(K^2_t\setminus K^1_t)dt\right].
    \end{equation*}
    We deduce that
    \begin{align}\label{J1J2}
        \E\left[
            \sup_{0\leq t\leq T'} \left\{J^1_t + J^2_t\right\}
        \right]
        \leq
        \gammab M\E\left[
            \int_0^{T'}\#K^{1\triangle 2}_tdt
        \right].
    \end{align}
    In addition, using the same approach, we derive that
    \begin{align}
        \label{regularity J3J4}
        \E\left[
            \sup_{0\leq t\leq T'}\left\{J^3_t+J^4_t\right\}
        \right]
        \leq 
        (M+1)\E\left[
            \int_0^{T'}
                \sum_{k\in K^{1\cap2}_t}
                \left|
                    \gamma^{1,k}_t-\gamma^{2,k}_t
                \right|
            dt
        \right].
    \end{align}
    By the Lipschitz condition on $\gamma$ from Assumption \ref{A.2}, it follows that
    \begin{equation}
        \E\left[
            \sup_{0\leq t\leq T'}\left\{J^3_t+J^4_t\right\}
        \right]        
         \leq
        L(M+1)\E\left[
            \int_0^{T'}
                \sum_{k\in K^{1\cap2}_t}
                \left(
                    \left\|\Xmuk_{t\wedge\cdot}-\Xnuk_{t\wedge\cdot}\right\|
                    +
                    \Wc_1(\mu^1_t,\mu^2_t)
                \right)
            dt
        \right].\label{J3J4}
    \end{equation}
    Finally, using Lemma \ref{lemma wasserstein distance of p} below,  we obtain that 
    \begin{align}
        \label{regularity J5}
        \E\left[
            \sup_{0\leq t\leq T'}J^5_t
        \right]
        & \leq
        \gammab\E\left[
            \int_0^{T'}
                \sum_{k\in K^{1\cap2}_t}\int_0^1
                    \sum_{\ell,\ell'\geq0}\left|\ell-\ell'\right|\1_{I^{1,k}_{\ell,t}\cap I^{2,k}_{\ell',t}}(u)
                du
            dt
        \right] 
        \nonumber \\
        & \leq 
           \gammab \E\left[
            \int_0^{T'}
                \sum_{k\in K^{1\cap2}_t}
                \sum_{\ell\ge 0} \ell \left|
                    p^{1,k}_{\ell,t}-p^{2,k}_{\ell,t}
                \right|
            dt
            \right].
    \end{align}
    Thus, by the Lipschitz assumption on $(p_\ell)_{\ell\in\N}$ from Assumption \ref{A.2}, we deduce as above that
    \begin{equation}
        \E\left[
            \sup_{0\leq t\leq T'}J^5_t
        \right]
        \leq
        \gammab  M'\E\left[
            \int_0^{T'}
                \sum_{k\in K^{1\cap2}_t}
                \left(
                    \left\|\Xmuk_{t\wedge\cdot}-\Xnuk_{t\wedge\cdot}\right\|
                    +
                    \Wc_1(\mu^1_t,\mu^2_t)
                \right)
            dt
        \right]\label{J5}
    \end{equation}
    We finish this step by combining \eqref{L}, \eqref{J1J2}, \eqref{J3J4} and \eqref{J5} to obtain
     \begin{equation}
        \E\left[
            \sup_{0\leq t\leq T'}\#K^{1\triangle 2}_t
        \right]
        \leq 
        C \E\left[
            \int_0^{T'} \bigg( \#K^{1\triangle 2}_t +
             \sum_{k\in K^{1\cap2}_t}
                \left(\left\|\Xmuk_{t\wedge\cdot}-\Xnuk_{t\wedge\cdot}\right\|+\Wc_1(\mu^1_t,\mu^2_t)\right)\bigg)dt
        \right],\label{estimate for branching}
    \end{equation}
    where the constant $C>0$ depends on $(\gammab,L,M,M').$

    \textit{Step 2.}
	Now we turn to the second term in \eqref{eq:lem_distance} measuring the distance between positions of particles with common labels in $Z_t^1$ and $Z_t^2.$     For simplicity, we denote for $i = 1,2$, $k\in\K$ and $t\in[0,T]$, $b^{i,k}_t := b(t,X^{i,k}_{t\wedge\cdot},\mu^i_t)$ and $\sigma^{i,k}_t := \sigma(t,X^{i,k}_{t\wedge\cdot},\mu^i_t).$ We observe first that
for any $k\in K^{1\cap2}_t,$
    \begin{equation*}
        \Xmuk_t-\Xnuk_t
        =
        \Xmuk_{\bar{S}^k} - \Xnuk_{\bar{S}^k}\\
         +
            \int_0^t
                \1_{k\in K^{1\cap2}_s}\left(b^{1,k}_s - b^{2,k}_s\right)
            ds 
         +
            \int_0^t
                \1_{k\in K^{1\cap2}_s}\left(\sigma^{1,k}_s - \sigma^{2,k}_s\right)
            d\Wk_s,
    \end{equation*}
    where $\bar{S}^k=\max(S^{1,k},S^{2,k})$ is the last time of birth of particle $k,$ so that $k\in K^{1\cap2}_s$ whenever $s\in[\bar{S}^k,t].$  
    It follows that
    \begin{multline} \label{eq:ineq_path}
    \1_{k\in K^{1\cap2}_t} \left\|\Xmuk_{t\wedge\cdot}-\Xnuk_{t\wedge\cdot}\right\| \le \1_{k\in K^{1\cap2}_t} \left\|\Xmuk_{\bar{S}^k\wedge\cdot}-\Xnuk_{\bar{S}^k\wedge\cdot}\right\| \\
    + 
    \int_0^t
                \1_{k\in K^{1\cap2}_s}\left| b^{1,k}_s - b^{2,k}_s \right|
            ds
         +
           \sup_{0\le r\le t} \left|\int_0^r
                \1_{k\in K^{1\cap2}_s}\left(\sigma^{1,k}_s - \sigma^{2,k}_s\right)
            d\Wk_s\right|.
    \end{multline}
       Let us deal first with the last two terms on the r.h.s.\ of~\eqref{eq:ineq_path}. By using the Lipschitz condition on $b$ and $\sigma$ from Assumption~\ref{A.2},  we have for all $k\in\K,$
    \begin{equation*}
        \E\left[
            \int_0^{t}
                \1_{k\in K^{1\cap2}_s}
                \left|
                    b^{1,k}_s - b^{2,k}_s
                \right|
            ds
        \right] 
        \leq 
        L\E\left[
            \int_0^{t}
                \1_{k\in K^{1\cap2}_s} 
                \left(
                    \left\|\Xmuk_{s\wedge\cdot}-\Xnuk_{s\wedge\cdot}\right\|
                    +
                    \Wc_1(\mu^1_s,\mu^2_s)
                \right)
            ds
        \right],
    \end{equation*}
    as well as 
        \begin{align*}
        \E\left[
            \sup_{0\leq r\leq t}
            \left|
                \int_0^r
                    \1_{k\in K^{1\cap2}_s}
                    \left(
                       \sigma^{1,k}_s - \sigma^{2,k}_s
                    \right)
                d\Wk_s
            \right|
        \right]
        &  \leq
        C_1\E\left[
            \left(
                \int_0^{t}
                    \1_{k\in K^{1\cap2}_s}
                    \left|
                       \sigma^{1,k}_s - \sigma^{2,k}_s
                    \right|^2
                ds
            \right)^{\frac{1}{2}}
        \right]\\
         & \leq
        C_1\sqrt{t}\E\left[
            \sup_{0\leq s\leq t}
            \1_{k\in K^{1\cap2}_s}
            \left|
                \sigma^{1,k}_s - \sigma^{2,k}_s
            \right|
        \right] \\
        & \leq
        C_1 L \sqrt{t} \E\left[
            \sup_{0\leq s\leq t}\1_{k\in K^{1\cap2}_s} 
            \left(
            \left\|\Xmuk_{s\wedge\cdot}-\Xnuk_{s\wedge\cdot}\right\|
                    +
            \Wc_1(\mu^1_s,\mu^2_s)
            \right)
        \right],
    \end{align*}    
    where the first inequality follows from the Burkholder--Davis--Gundy inequality. 
    Regarding the first term on the r.h.s.\  of~\eqref{eq:ineq_path}, we distinguish between particles whose parents are in the common set or not. In the latter case, we use the fact that $\|\Xmuk_{\bar{S}^k\wedge\cdot}-\Xnuk_{\bar{S}^k\wedge\cdot}\|\le 1$ so that 
    \begin{multline*}
    \E\left[\sum_{k\in \K}\sup_{0\le t \le T'} \1_{k\in K^{1\cap2}_t, k^-\in K^{1\triangle 2}_{\bar{S}^k-}} \left\|\Xmuk_{\bar{S}^k\wedge\cdot}-\Xnuk_{\bar{S}^k\wedge\cdot}\right\|\right] \\ 
    \le \E\left[ \sum_{k\in \K}\sup_{0\le t \le T'} \1_{k\in K^{1}_t\cup K^{2}_t, k^-\in K^{1\triangle 2}_{\bar{S}^k-}}\right]
    = \E\left[J^1_{T'} + J_{T'}^2\right] 
    \le \gammab M\E\left[
            \int_0^{T'}\#K^{1\triangle 2}_tdt
        \right],
    \end{multline*}
    where $k^-$ denotes the parent of particle $k$ and the last inequality follows from~\eqref{J1J2}.  
    Note that $\sum_{k\in \K}\sup_{0\le t \le T'} \1_{k\in K^{1}_t\cup K^{2}_t, k^-\in K^{1\triangle 2}_{\bar{S}^k-}}$ corresponds to the total number of particles born from a parent in the distinct set $K^{1\triangle 2}$ before time $T'.$
    Alternatively, when the parent belongs to the common set, it holds 
    \begin{multline*}
     \E\left[ \sum_{k\in \K}\sup_{0\le t \le T'} \1_{k\in K^{1\cap2}_t, k^-\in K^{1\cap 2}_{\bar{S}^k-}} \left\|\Xmuk_{\bar{S}^k\wedge\cdot}-\Xnuk_{\bar{S}^k\wedge\cdot}\right\|\right] \\
        \begin{aligned}
        & = \E\left[
            \int_{(0,T']\x [0,\gammab]\x [0,1]}
                \sum_{k\in K^{1\cap2}_{t-}} \sum_{\ell,\ell'\geq0}(\ell\wedge \ell')\1_{I^{1,k}_{\ell,t}\cap I^{2,k}_{\ell',t}\x[0,\gamma^{1,k}_t\wedge\gamma^{2,k}_t]}(z) \left\|\Xmuk_{t\wedge\cdot}-\Xnuk_{t\wedge\cdot}\right\|
            \Qk(dt,dz)
        \right]\\
        & \leq 
        \gammab\E\left[
            \int_0^{T'}\sum_{k\in K^{1\cap2}_t}
                \sum_{\ell,\ell'\geq0}\ell \,\big|I^{1,k}_{\ell,t}\cap I^{2,k}_{\ell',t}\big| \left\|\Xmuk_{t\wedge\cdot}-\Xnuk_{t\wedge\cdot}\right\|
            dt
        \right]\\
        & = 
        \gammab\E\left[
            \int_0^{T'}\sum_{k\in K^{1\cap2}_t}
                \sum_{\ell\geq0}\ell p^{1,k}_{\ell,t} \left\|\Xmuk_{t\wedge\cdot}-\Xnuk_{t\wedge\cdot}\right\|
            dt
        \right]\nonumber\\
        & \leq
        \gammab M \E\left[ \int_0^{T'}
            \sum_{k\in K^{1\cap2}_t}
                    \left\|\Xmuk_{t\wedge\cdot}-\Xnuk_{t\wedge\cdot}\right\|
        \right].
        \end{aligned}
    \end{multline*}
Taking successively supremum over $t\in[0,T'],$ sum over $k\in \K$ and expectation in \eqref{eq:ineq_path}, we conclude by combining the inequalities above that
    \begin{multline}
        \E\left[\sum_{k\in \K}\sup_{0\leq t\leq T'}\1_{k\in K^{1\cap2}_t}\left\|\Xmuk_{t\wedge\cdot}-\Xnuk_{t\wedge\cdot}\right\|\right] \\
        \le        
        C\sqrt{T'}\E\left[\sum_{k\in \K}\sup_{0\leq t\leq T'}\1_{k\in K^{1\cap2}_t} \left(
            \left\|\Xmuk_{t\wedge\cdot}-\Xnuk_{t\wedge\cdot}\right\|
                    +
            \Wc_1(\mu^1_t,\mu^2_t)
            \right)\right] \\
        +
        C \E\left[
            \int_0^{T'}\bigg(\#K^{1\triangle 2}_t +
             \sum_{k\in K^{1\cap2}_t}
                \left(\left\|\Xmuk_{t\wedge\cdot}-\Xnuk_{t\wedge\cdot}\right\|+\Wc_1(\mu^1_t,\mu^2_t)\right)\bigg)dt
        \right], \label{estimate for movement}
    \end{multline}
    where the constant $C>0$ depends on $(\gammab,L,M).$

	\vspace{0.5em}

    \textit{Step 3.}
    Let us denote 
     \begin{equation*}
       \bar{d}_{T'}(Z^1,Z^2)
        := \sup_{0\leq t\leq T'}\#K^{1\triangle 2}_t
           +
            \sum_{k\in \K}\sup_{0\leq t\leq T'}\1_{k\in K^{1\cap2}_t}\left\|\Xmuk_{t\wedge\cdot}-\Xnuk_{t\wedge\cdot}\right\|.
    \end{equation*}
    Observe that 
    \begin{equation*}
     \E\left[
            \int_0^{T'} \bigg(\#K^{1\triangle 2}_t +
             \sum_{k\in K^{1\cap2}_t}
                \left\|\Xmuk_{t\wedge\cdot}-\Xnuk_{t\wedge\cdot}\right\| \bigg) dt
          \right] 
          \le 
          T'
           \E\left[
           \bar{d}_{T'}(Z^1,Z^2)
        \right],
    \end{equation*}
    and 
    \begin{align*}
    \E\left[
            \int_0^{T'}
             \sum_{k\in K^{1\cap2}_t}
                \Wc_1(\mu^1_t,\mu^2_t) 
                \,dt
        \right] 
       & \le 
        T'
            \E\left[
             \sum_{k\in\K} \sup_{0\leq t\leq T'}\1_{k\in K^{1\cap2}_t}  \Wc_1(\mu^1_{t},\mu^2_{t}) 
        \right] \\
        & \le 
            T' \E\left[\#K_0\right]e^{\gammab M T'}
            \Wc_1(\mu^1_{T'},\mu^2_{T'}),
    \end{align*}    
    where the last inequality follows from Lemma \ref{lemma integrability} and $\Wc_1(\mu^1_{t},\mu^2_{t}) \le \Wc_1(\mu^1_{T'},\mu^2_{T'})$ for $t\in[0,T'].$    
    Combining \eqref{estimate for branching} and \eqref{estimate for movement} and using both inequalities above,
   we can find some positive constants $c_d>0$ and $c_w>0$ depending on $(\gammab,L,M,M')$ such that
    \begin{equation*}
       \E\left[
           \bar{d}_{T'}(Z^1,Z^2)
        \right] 
        \le c_d(T'+\sqrt{T'})
       \E\left[
           \bar{d}_{T'}(Z^1,Z^2)
        \right]
        + c_w(T'+\sqrt{T'}) \E\left[\#K_0\right]e^{\gammab M T'} \Wc_1(\mu^1_{T'},\mu^2_{T'})   .
    \end{equation*}
    Choosing $T'>0$ small enough to have $1-c_d(T'+\sqrt{T'})>0,$ we deduce that
    \begin{equation*}
       \E\left[
           \bar{d}_{T'}(Z^1,Z^2)
        \right]
         \leq 
        \frac{c_w(T'+\sqrt{T'})}{1-c_d(T'+\sqrt{T'})} \E\left[\#K_0\right]e^{\gammab M T'}\Wc_1(\mu^1_{T'},\mu^2_{T'}).
    \end{equation*}
    The conclusion follows immediately by observing that $d_{T'}(Z^1,Z^2)\le \bar{d}_{T'}(Z^1,Z^2).$
\end{proof}

\begin{lemma}
    \label{lemma wasserstein distance of p}
    Let Assumptions \ref{A.1} hold. For any $(t,\xr,\xr',m,m')\in [0,T]\x\Cc^d\x\Cc^d\x\Pc_1(\Dc_E)\x\Pc_1(\Dc_E)$, we have
    \begin{equation*}
        \int_0^1
            \sum_{\ell,\ell'\geq0}\left|\ell-\ell'\right|\1_{I_\ell(t,\xr,m)\cap I_{\ell'}(t,\xr',m')}(u)
        \,du
        ~\leq~
        \sum_{\ell\geq0} \ell \left|p_\ell(t,\xr,m) - p_\ell(t,\xr',m')\right|.
    \end{equation*}    
\end{lemma}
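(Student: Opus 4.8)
The plan is to identify the left-hand side with the expected value of $|L-L'|$ under the quantile (monotone) coupling of the two progeny laws, compute it explicitly through the cumulative distribution functions, and then bound it by the right-hand side via a discrete Fubini argument.

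Write $p_\ell:=p_\ell(t,\xr,m)$, $q_\ell:=p_\ell(t,\xr',m')$, and set the partial sums $P_n:=\sum_{i=0}^n p_i$, $Q_n:=\sum_{i=0}^n q_i$ for $n\ge0$, with $P_{-1}:=Q_{-1}:=0$, so that $I_\ell(t,\xr,m)=[P_{\ell-1},P_\ell)$ and $I_\ell(t,\xr',m')=[Q_{\ell-1},Q_\ell)$. First I would note that $(I_\ell(t,\xr,m))_{\ell\ge0}$ and $(I_\ell(t,\xr',m'))_{\ell\ge0}$ are each partitions of $[0,1)$, so for every $u\in[0,1)$ there is exactly one pair $(\ell,\ell')$ with $u\in I_\ell(t,\xr,m)\cap I_{\ell'}(t,\xr',m')$; calling these indices $\ell(u)$ and $\ell'(u)$, the integrand on the left reduces to $|\ell(u)-\ell'(u)|$. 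Equivalently, if $U$ is uniform on $[0,1]$, then $L:=\ell(U)$ and $L':=\ell'(U)$ have probability mass functions $(p_\ell)_\ell$ and $(q_\ell)_\ell$, and the left-hand side is $\E\big[|L-L'|\big]$.

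The key computation is the layer-cake identity $|a-b|=\sum_{n\ge0}\big|\1_{\{a>n\}}-\1_{\{b>n\}}\big|$, valid for all $a,b\in\N$, combined with the elementary equivalence $\ell(u)>n\iff u\ge P_n$ (which holds because $u\in[P_{\ell(u)-1},P_{\ell(u)})$ and $(P_n)_n$ is nondecreasing), and similarly $\ell'(u)>n\iff u\ge Q_n$. Plugging these in and using Tonelli's theorem (all summands being nonnegative),
\[
  \int_0^1\sum_{\ell,\ell'\ge0}|\ell-\ell'|\,\1_{I_\ell(t,\xr,m)\cap I_{\ell'}(t,\xr',m')}(u)\,du
  =\sum_{n\ge0}\int_0^1\big|\1_{\{u\ge P_n\}}-\1_{\{u\ge Q_n\}}\big|\,du
  =\sum_{n\ge0}|P_n-Q_n|,
\]
since $\int_0^1|\1_{\{u\ge a\}}-\1_{\{u\ge b\}}|\,du=|a-b|$ for $a,b\in[0,1]$. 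To conclude, I would use that $\sum_{i\ge0}p_i=\sum_{i\ge0}q_i=1$ gives $P_n-Q_n=-\sum_{i>n}(p_i-q_i)$, whence $|P_n-Q_n|\le\sum_{i>n}|p_i-q_i|$; summing over $n\ge0$ and exchanging the order of summation yields
\[
  \sum_{n\ge0}|P_n-Q_n|\le\sum_{n\ge0}\sum_{i>n}|p_i-q_i|=\sum_{i\ge1}i\,|p_i-q_i|=\sum_{\ell\ge0}\ell\,|p_\ell-q_\ell|,
\]
which is precisely the right-hand side.

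There is no serious obstacle here; the only points needing a little care are the identification $\{\ell(u)>n\}=\{u\ge P_n\}$ when some of the $p_\ell$ (hence some intervals $I_\ell$) vanish, and checking that the rearrangements of series are legitimate — which they are, since every summand is nonnegative. An alternative, slightly shorter route avoids the layer-cake identity: the sets $\{U\ge P_n\}$ and $\{U\ge Q_n\}$ are nested subintervals of $[0,1)$, so $\P(L>n\ge L')+\P(L'>n\ge L)=|P_n-Q_n|$, and one simply sums this over $n\ge 0$.
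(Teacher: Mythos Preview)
Your proof is correct and follows essentially the same route as the paper: both identify the left-hand side with $\sum_{n\ge0}|P_n-Q_n|$ (the paper names this quantity as the $\Wc_1$ distance between the two progeny laws and invokes the standard quantile and CDF representations, whereas you derive the identity directly via the layer-cake formula), and both then bound it by $\sum_{\ell\ge0}\ell\,|p_\ell-q_\ell|$ through the same tail-sum/Fubini swap.
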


\begin{proof}
    Fix $(t,\xr,\xr',m,m')\in [0,T]\x\Cc^d\x\Cc^d\x\Pc_1(\Dc_E)\x\Pc_1(\Dc_E)$ and consider the two probability measures $P :=(p_\ell(t,\xr,m))_{\ell\in\N}$ and $P' :=(p_\ell(t,\xr',m'))_{\ell\in\N},$ which belong to $\Pc_1(\N)$ in view of Assumption \ref{A.1}.  Denote the cumulative distribution functions of $P$ and $P'$ as $F$ and $F'$ respectively, and observe that the generalized inverses are given as follows:  for all $u\in[0,1],$
    \begin{align*}
        F^{-1}(u)
        =
        \sum_{\ell\geq0}\ell \1_{I_\ell}(u)
        ~~\text{and}~~
        (F')^{-1}(u)
        =
        \sum_{\ell\geq0}\ell\1_{I'_{\ell}}(u),
    \end{align*}
    where $I_\ell:=I_\ell(t,\xr,m)$ and $I'_{\ell}:=I_{\ell}(t,\xr',m')$ for $\ell\in\N$.    
    Using the well-known representation of Wasserstein metric for one--dimensional distributions, it holds
    \begin{align*}
        \Wc_1(P,P')
        =
        \int_0^1
            \left|F^{-1}(u)-(F')^{-1}(u)\right|
        du
        =
        \int_0^1
            \sum_{\ell,\ell'\geq0}\left|\ell-\ell'\right|\1_{I_\ell\cap I'_{\ell'}}(u)
        du,
    \end{align*} 
    In addition, by change of variable, we also have that
    \begin{align*}
        \Wc_1(P,P')
        =
        \sum_{j\geq0}\left|F(j)-F'(j)\right|
        & =
        \sum_{j\geq0}
        \left|
            \sum_{\ell\leq j}p_\ell(t,\xr,m) - \sum_{\ell\leq j}p_\ell(t,\xr',m')
        \right|\\
        & \leq 
        \sum_{j\geq0}\sum_{\ell>j}\left|p_\ell(t,\xr,m)-p_\ell(t,\xr',m')\right|\\
        & = 
        \sum_{\ell\geq 0}\ell\left|p_\ell(t,\xr,m)-p_\ell(t,\xr',m')\right|.
    \end{align*}
    The conclusion follows immediately.
\end{proof}

\subsection{Proof of Theorem \ref{thm:strong_sol}}
\label{sec:proof_strong_sol}

\begin{proof}[Proof of Theorem \ref{thm:strong_sol}]
    We outline the strategy of the proof. We first establish existence and uniqueness of SDE~\eqref{eq:SDE_MKVB}--\eqref{eq:SDE_MKVB2} on a small interval $[0,T']$ by constructing a contraction mapping.  Then we shift the time window to $[T',2T']$ and so forth to establish the global well-posedness.\\

    \textit{Step 1.}
    To establish existence and uniqueness on a small interval, we construct a mapping $\Psi_t:\Pc_1(\Dc_E)\rightarrow\Pc_1(\Dc_E)$ for each $t\in[0,T],$ to which we aim to apply a Picard fixed point iteration. 
    Fix $\mu\in \Pc_1(\Dc_E)$ and denote by $Z$ the solution to the branching diffusion SDE~\eqref{eq:branch_SDE_classical} which is well-defined by Lemma \ref{lemma unique existence fixed flow of law}. 
    Then we define the map $\Psi_t$ on $\Pc_1(\Dc_E)$ onto itself as follows:
    \begin{align}\label{eq:psi}
        \Psi_t: \mu\in\Pc_1(\Dc_E) \mapsto \Lc(Z_{t\wedge\cdot}).
    \end{align}
     Notice that $\Psi_t(\mu)\in\Pc_1(\Dc_E)$  by Lemma \ref{lemma integrability} in view of~\eqref{eq:p1}. Therefore, existence and uniqueness of solution to SDE \eqref{eq:SDE_MKVB}--\eqref{eq:SDE_MKVB2} is equivalent to existence and uniqueness of a fixed point $\mu = \Psi_T(\mu).$ Actually we will rather consider the localized version 
    \begin{align*}
        \mu_t = \Psi_t(\mu_t),\quad\text{where  $\mu_t:=\mu\circ(\pi_t)^{-1}\in\Pc_1(\Dc_E).$} 
    \end{align*}       

\textit{Step 2.}
In this step, we prove that there exists $T'\in[0,T]$ such that the equation
\begin{align*}
    \mu_{T'} = \Psi_{T'}(\mu_{T'})
\end{align*}
admits a unique solution. 
To this end, we aim to show that $\Psi_{T'}$ is a contraction mapping when $T'$ is sufficiently small. Let us arbitrarily pick $\mu^1_{T'},\mu^2_{T'}\in\Pc_1(\Dc_E).$ Denote by $Z^i$ the unique solution to SDE~\eqref{eq:branch_SDE_classical} given $\mu^i_{T'}$ for $i=1,2$. 
Notice first that
\begin{align*}
    \Wc_1\left(\Psi_{T'}(\mu^1_{T'}),\Psi_{T'}(\mu^2_{T'})\right)
    \leq
    \E\left[
        d(Z^1_{T'\wedge\cdot},Z^2_{T'\wedge\cdot})
    \right]
    =
    \E\left[
        d_{T'}(Z^1,Z^2)
    \right].
\end{align*}
In view of Lemma \ref{lemma compare of two solutions}, there exist constants $c_d,c_w>0$ such that for any $T'>0$ small enough to have $1-c_d(T'+\sqrt{T'})>0,$ it holds
\begin{equation*}
    \E\left[
        d_{T'}(Z^1,Z^2)
    \right]
    \leq 
    \frac{c_w(T'+\sqrt{T'})}{1-c_d(T'+\sqrt{T'})}\E\left[\#K_0\right]e^{\gammab M T}
    \Wc_1\left(\mu^1_{T'},\mu^2_{T'}\right),
\end{equation*}
where we also used the trivial inequality $e^{\gammab M T'}\le e^{\gammab M T}.$
W.l.o.g.  we can take $T'$ even smaller to ensure that
\begin{align*}
    T'+\sqrt{T'}
    <
    \frac{1}{c_d+c_w\E\left[\#K_0\right]e^{\gammab M T}},
\end{align*}
so that 
\begin{equation}\label{eq:kappa}
    0
    <
    \kappa 
    :=
    \frac{c_w(T'+\sqrt{T'})}{1-c_d(T'+\sqrt{T'})}\E\left[\#K_0\right]e^{\gammab M T}
    <
    1.
\end{equation}
It follows that 
\begin{align*}
    \Wc_1(\Psi_{T'}(\mu^1_{T'}),\Psi_{T'}(\mu^2_{T'}))
    \leq
    \kappa \Wc_1(\mu^1_{T'},\mu^2_{T'}).
\end{align*}
In other words,  $\Psi_{T'}$ is a contraction mapping on $(\Pc_1(\Dc_E),\Wc_1)$. In particular, uniqueness of the solution to SDE~\eqref{eq:SDE_MKVB}--\eqref{eq:SDE_MKVB2} on $[0,T']$ follows immediately. Regarding existence, it suffices to observe that the sequence of iteration $(\Psi^{(n)}_{T'}(\mu_{T'}))_{n\in\N}$ converges as  a Cauchy sequence in $(\Pc_1(\Dc_E),\Wc_1)$ which is complete since $\Dc_E$ equipped with the uniform topology is complete (though not separable). 

\textit{Step 3.}
After establishing existence and uniqueness on $[0,T']$ in \textit{Step 2}, we now aim to extend these properties to a larger interval $[0,T'+\delta T]$ for some $\delta T>0$ to be determined. Denote $T'':=T'+\delta T$ with $\delta T\le T-T'.$ Fix $\mu^1_{T''},\mu^2_{T''}\in\Pc_1(\Dc_E)$ such that $\mu^1_{T'}=\mu^2_{T'}=\mu_{T'}$ where $\mu_{T'}$ is the unique solution to $\Psi_{T'}(\mu_{T'})=\mu_{T'}$ constructed in \textit{Step 2}. 
Let us denote by $Z^i\in\Dc_E$ the solution to \eqref{eq:branch_SDE_classical} given $\mu^i_{T''}$ for $i=1,2.$
In particular, it holds that $Z^1_{T'\wedge\cdot} = Z^2_{T'\wedge\cdot}$ by uniqueness in Lemma~\ref{lemma unique existence fixed flow of law}.
It follows that 
\begin{align*}
    \Wc_1\left(\Psi_{T''}(\mu^1_{T''}),\Psi_{T''}(\mu^2_{T''})\right)
    \leq
    \E\left[
        d(Z^1_{T''\wedge\cdot},Z^2_{T''\wedge\cdot})
    \right]
    =
    \E\left[
        \sup_{T'\leq t\leq T''}d_E(Z^1_t,Z^2_t)
    \right].
\end{align*}
By a straightforward extension of Lemma \ref{lemma compare of two solutions}, shifting the origin of time from $0$ to $T',$ we deduce that for any $\delta T\in (0,T-T']$ such that $1-c_d(\delta T+\sqrt{\delta T})>0,$
\begin{align*}
    \Wc_1\left(\Psi_{T''}(\mu^1_{T''}),\Psi_{T''}(\mu^2_{T''})\right)
    \leq &
    \frac{c_w(\delta T+\sqrt{\delta T})}{1-c_d(\delta T+\sqrt{\delta T})} \E\left[\#K_{T'}\right]e^{\gammab M \delta T}
        \Wc_1\left(\mu^1_{T''},\mu^2_{T''}\right).
\end{align*}
where $\# K_{T'}=\langle Z^1_{T'}, \1{}\rangle = \langle Z^2_{T'}, \1{}\rangle.$ In addition,  Lemma~\ref{lemma integrability} ensures that 
\begin{equation*}
    \E\left[\#K_{T'}\right]
    \leq
    \E\left[\#K_0\right] e^{\gammab M T'}.
\end{equation*}
It follows that
\begin{equation*}
    \Wc_1\left(\Psi_{T''}(\mu^1_{T''}),\Psi_{T''}(\mu^2_{T''})\right)
    \leq 
    \frac{c_w(\delta T+\sqrt{\delta T})}{1-c_d(\delta T+\sqrt{\delta T})}\E\left[\#K_0\right]
    e^{\gammab M T}\Wc_1(\mu^1_{T''},\mu^2_{T''}).
\end{equation*}
where we used the trivial inequality $e^{\gammab M T''}\le e^{\gammab M T}.$
Thus we can repeat the calculation from \textit{Step~2} and  choose $\delta T = \min(T',T-T')$ so that
\begin{align*}
    \Wc_1(\Psi_{T''}(\mu^1_{T''}),\Psi_{T''}(\mu^2_{T''}))
    \leq
    \kappa
    \Wc_1(\mu^1_{T''},\mu^2_{T''}),
\end{align*}
where $0<\kappa<1$ is given by~\eqref{eq:kappa}. 
It follows that SDE~\eqref{eq:SDE_MKVB}--\eqref{eq:SDE_MKVB2} admits a unique solution on $[0,T'+\delta T]$. 
We conclude the proof by repeating the above procedure finitely many times in order to cover the whole interval $[0,T].$
\end{proof}

\begin{remark}
\label{remark reason for L 1}
	We use the classical contraction argument to obtain existence and uniqueness of McKean--Vlasov branching diffusion. 
	However, unlike most of the literature on McKean--Vlasov diffusion, we work with the one--Wasserstein metric $\Wc_1$ instead of $\Wc_2$ and it appears that our argument fails otherwise.
To illustrate the main difficulty,  let us consider the simple case 
when $b = 0,$ $\sigma = 0,$ $p_2 = 1$ and $\gamma:[0,T]\x \Pc_2(\Dc_E)\to\R_+$ is Lipschitz in the second argument, \ie, there exists $L>0$ such that for all $(t,m,m')\in[0,T]\x\Pc_2(\Dc_E)\x\Pc_2(\Dc_E),$
	\begin{align*}
		|\gamma(t,m)-\gamma(t,m')|
		\leq
		L \Wc_2(m_t,m_t').
	\end{align*}
		The key of the contraction argument is to obtain an estimate similar to Lemma~\ref{lemma compare of two solutions}. 
	Using the same notations, we observe first that 	
	\begin{align*}
		d(Z^1,Z^2)
		=
		\sup_{0\leq t\leq T} \#K^{1\triangle 2}_t.
	\end{align*}
	Following \textit{Step 1} in the proof of Lemma \ref{lemma compare of two solutions}, we obtain from~\eqref{L} that
	   \begin{equation*}
        \E\left[d(Z^1,Z^2)^2\right]
        \leq   2 \E\left[\left(J^1_T + J_T^2\right)^2\right] + 2 \E\left[\left(J^3_T + J_T^4\right)^2\right].
    \end{equation*}
    To estimate both terms on the r.h.s.\  above, we use the following inequality: Denoting $\bar{Q}^k(ds,dz) := Q^k(ds,dz)  - ds \,dz$ the compensated Poisson measure, it holds under appropriate integrability condition,
    \begin{multline*}
    \E\left[
           \left( \int_{(0,T]\x\R_+^2}
                \sum_{k\in \K} f^k_t(z)
           \, \Qk(dt,dz) \right)^2
        \right] \\
        \begin{aligned}
         & \leq 2 \E\left[
           \left( \int_{(0,T]\x\R_+^2}
                \sum_{k\in \K} f^k_t(z)
            \bar{Q}^k(dt,dz) \right)^2
        \right] 
        + 
        2T\E\left[
          \int_0^T
                 \left( \sum_{k\in \K} \int_{\R_+^2}f^k_t(z) \,dz\right)^2
            dt 
        \right] \\
       & = 2 \E\left[
          \int_0^T
                \sum_{k\in \K} \int_{\R_+^2} \left( f^k_t(z)\right)^2 dz
            \,dt 
        \right]
        + 
        2T\E\left[
          \int_0^T
                 \left( \sum_{k\in \K} \int_{\R_+^2}f^k_t(z) \,dz\right)^2
            dt 
        \right].
       \end{aligned}
    \end{multline*}
    Using the inequality above with $f^k_t(z) = 2 \1_{k\in K^1_{t-}\setminus K^2_{t-}}  \1_{[0,1]\x[0,\gamma^1_t]}(z) + 2 \1_{k\in K^2_{t-}\setminus K^1_{t-}}  \1_{[0,1]\x[0,\gamma^2_t]}(z),$ it follows that 
    \begin{align*}
    \E\left[\left(J^1_T + J_T^2\right)^2\right]
    	 & \le 2 \E\left[
           \int_0^T
                \sum_{k\in K^{1\triangle 2}_t} 4 \gammab
            \,dt
        \right] 
        + 
        2T\E\left[
          \int_0^T
                 \left(\sum_{k\in K^{1\triangle 2}_t} 2 \gammab \right)^2
            dt 
        \right] \\
       & \le C
           \int_0^T
                \E\left[\left(\# K^{1\triangle 2}_t\right)^2\right]  
           \,dt.
    \end{align*}
    Similarly,  using $f^k_t(z) = 3 \1_{k\in K^{1\cap 2}_{t-}}\1_{[0,1]\x [\gamma^1_t\wedge\gamma^2_t, \gamma^1_t\vee\gamma^2_t]} (z),$ it holds
    \begin{align*}
    \E\left[\left(J^3_T + J_T^4\right)^2\right]
    	 & \le 2 \E\left[
           \int_0^T
                \sum_{k\in K^{1\cap 2}_t} 9 \left|\gamma^1_t-\gamma^2_t\right|
            \,dt
        \right] 
        + 
        2T\E\left[
          \int_0^T
                 \left(\sum_{k\in K^{1\cap 2}_t} 3 \left|\gamma^1_t-\gamma^2_t\right| \right)^2
            dt 
        \right] \\
        &  \le C 
           \int_0^T
                \left(\Wc_2(\mu^1_t,\mu^2_t) 
                + \Wc_2(\mu^1_t,\mu^2_t)^2
               \right)  
           \,dt.
    \end{align*}
    Combining the estimates above, we deduce that 
	   \begin{equation*}
        \E\left[d_T(Z^1,Z^2)^2\right]
        \leq   C  \int_0^T\E\left[
            d_t(Z^1,Z^2)^2
        \right] +  C  \int_0^T 
                \left(\Wc_2(\mu^1_t,\mu^2_t) 
                + \Wc_2(\mu^1_t,\mu^2_t)^2 
               \right)  
           \,dt.
    \end{equation*}
    Using Gr\"owall Lemma, it follows that
	   \begin{equation}\label{inequality remark final estimation}
        \E\left[d_T(Z^1,Z^2)^2\right]
        \leq   C \int_0^T
                \left(\Wc_2(\mu^1_t,\mu^2_t) 
                + \Wc_2(\mu^1_t,\mu^2_t)^2 
               \right)  
           \,dt.
    \end{equation}
    This inequality aims to play the role of Lemma~\ref{lemma compare of two solutions} to show that $\Psi_T$ defined by~\eqref{eq:psi} is a contraction in $(\Pc_2(\Dc_E),\Wc_2).$ 
    The problem comes from the fact that, although the l.h.s. of~\eqref{inequality remark final estimation} dominates $\Wc_2(\Psi_T(\mu^1),\Psi_T(\mu^2))^2$, the r.h.s.\  of~\eqref{inequality remark final estimation} contains a term
	$\Wc_2(\mu^1_t, \mu^2_t)$ which is not dominated by $\Wc_2(\mu^1_t, \mu^2_t)^2$.
	Therefore,  the classical argument using Gr\" onwall Lemma to show that $\Psi_T$ is a contraction fails in this setting.
\end{remark}

\section{Weak Formulation and Propagation of Chaos}
\label{sec:proof_weak}

	We provide here the proof of the propagation of chaos property established in Theorem~\ref{thm:weak_existence_limit}, which induces existence of weak solutions to the McKean--Vlasov branching diffusion SDE.
	The main idea is to consider the convergence of the martingale problem associated to the $n$-interacting branching diffusion toward the martingale problem associated to the  McKean--Vlasov branching diffusion.
	Similar techniques have been used by Lacker \cite{Lacker 2017} and Djete, Possama\"i and Tan \cite{Djete Possamai Tan 2022 ii} in the context of optimal control of McKean--Vlasov diffusion,
	and by He \cite{He 2023}  for the corresponding optimal stopping problem.

\subsection{Martingale Problem Formulation}

	As in the classical SDE theory, the notion of weak solution can be equivalently formulated by means of an appropriate martingale problem defined on the canonical space.

\paragraph{Martingale problem for $n$-interacting branching diffusion}
	Let us first consider the $n$-interacting branching diffusion process given in Definition \ref{def:weak_solution_n}.
	We introduce the corresponding canonical space
	$$
		\Omb_n ~:=~ \Dc_E^n,
	$$
	equipped with the canonical process $(Z^{1}, \cdots, Z^{n})$ and the canonical filtration $\Fbb^n = (\Fcb^n_t)_{t \in [0,T]}$ defined by $\Fcb^n_t := \sigma(Z^1_s, \cdots, Z^n_s, ~s \le t)$.
	Let us also consider the empirical measure
	$$
		\mu^n_t ~:=~ \frac1n \sum_{i=1}^n \delta_{Z^i_{t \wedge \cdot}}.
	$$
	Further, given $\Phi^n \in C^2_b(\R^n,\R)$ and $\bar{\varphi} = (\varphi_1, \cdots, \varphi_n)$ where $\varphi_i = (\varphi_i^k)_{k\in \K} \in C^2_b(\K\x\R^d,\R)$ for each $i =1, \cdots, n$,
	we define $\Phi^n_{\bar{\varphi}}: E^n \longrightarrow \R$ by
	\begin{align*}
		\Phi^n_{\bar{\varphi}}(e_1, \cdots, e_n) 
		~:=~
		\Phi^n(\<e_1,\varphi_1\>, \cdots, \< e_n, \varphi_n \>),
	\end{align*}
	as well as, for all $t \in [0,T]$, $i=1,\cdots, n$, $\omb = (\om^1, \cdots, \om^n) \in \Omb^n$ with $\om^i_t = \sum_{k \in K^i_t} \delta_{(k,\xr^{i,k}_t)}$,
	\begin{multline*}
		\Hc^n_i \Phi^n_{\bar{\varphi}} (t, \omb)
		:=
		\frac{1}{2} \partial^2_{i i} \Phi^n_{\bar{\varphi}}(\omb_t) \sum_{k\in K^i_t} \left|D \varphi^k_i (\xr^{i,k}_t) \sigma(t, \xr^{i,k}_{t\wedge\cdot}, m^n_t)\right|^2
		+
		\partial_{i} \Phi^n_{\bar{\varphi}} (\omb_t) \sum_{k\in K^i_t} \Lc \varphi^k_i(t, \xr^{i,k}_{t\wedge \cdot}, m^n_t) \\
		+
		\sum_{k\in K^i_t }\gamma(t, \xr^{i,k}_{t\wedge\cdot}, m^n_t)
		\bigg(
		\sum_{\ell \ge 0} \Phi^n_{\bar{\varphi}} \Big(\om^1_t, \cdots, \om^i_{t-} -\delta_{(k, \xr^{i,k}_{t-})}+\sum_{j = 1}^{\ell} \delta_{(kj, \xr^{i,k}_{t-})} , \cdots,\om^n_t\Big) p_{\ell} (t, \xr^{i,k}_{t\wedge\cdot}, m^n_t)-\Phi^n_{\bar{\varphi}}(\omb_{t-})
		\bigg),
	\end{multline*}
	where $m^n_t := \frac1n \sum_{j=1}^n \delta_{\om^j_{t\wedge\cdot}}$ and $\partial_{i} \Phi^n$ (resp. $\partial^2_{ii} \Phi^n$) respresents the first (resp. second) partial derivative of $\Phi^n$ w.r.t.\ the $i$-th coordinate.
	Let us also define the process $\Mc^{\Phi^n_{\bar{\varphi}}} = ( \Mc^{\Phi^n_{\bar{\varphi}}}_t)_{t \in [0,T]}$ on $\Omb_n$ by
	\begin{align*}
		\Mc^{\Phi^n_{\bar{\varphi}}}_t 
		:=
		\Phi^n_{\bar{\varphi}} \big( Z^1_t, \cdots, Z^n_t \big) 
		-
		\int_0^t \sum_{i=1}^n  \Hc^n_i \Phi^n_{\bar{\varphi}} \big(s, Z^1_{s\wedge\cdot}, \cdots, Z^n_{s\wedge\cdot} \big) ds.
	\end{align*}
	
	\begin{definition} \label{def:n_tree_mgl_pb}
		A solution to the martingale problem for $n$-interacting branching diffusion with initial condition $m_0 \in \Pc(E)$
		is a probability measure $\Pb^n$ on $\Omb_n$ satisfying 
		\begin{itemize}
		 \item[\rm (i)] $\Pb^n \circ (Z^1_0, \cdots, Z^n_0)^{-1} = m_0 \x \cdots \x m_0$
		 \item[\rm (ii)] $\Mc^{\Phi^n_{\bar{\varphi}}}$ is a $(\Pb^n, \Fbb^n)$--martingale for all 
		$\Phi^n \in C^2_b(\R^n,\R)$ and $\bar{\varphi} = (\varphi_1, \cdots, \varphi_n)$ such that $\varphi_i \in C^2_b(\K\x\R^d,\R)$ for each $i =1, \cdots, n$.
		\end{itemize}
	\end{definition}

	\begin{proposition} \label{prop:equiv_mgtpb_n}
		Let Assumption \ref{A.1} hold and $m_0\in\Pc_1(E).$ Then the following assertions hold:
		\begin{itemize}
		 \item[\rm (i)] Let $\alpha_n = \left(\Om^n, \Fc^n, \F^n, \P^n,  (Z^{i})_{i = 1, \cdots, n}, (W^{i,k}, Q^{i,k})_{k\in\K, i = 1, \cdots, n} \right)$ 
		be a weak solution in the sense of Definition \ref{def:weak_solution_n},
		then the probability $\Pb^n := \P^n \circ (Z^{1}, \cdots, Z^{n})^{-1}$ solves the corresponding martingale problem for $n$-interacting branching diffusion.
		 
		 \item[\rm (ii)]  Let $\Pb^n$ be a solution to the martingale problem for $n$-interacting branching diffusion, 
		then there exist, in a possibly enlarged space, Brownian motions and Poisson random measures,
		which, together with the canonical space $(\Omb_n, \Fcb^n_T, \Fbb^n, \Pb^n)$, give a weak solution in the sense of Definition \ref{def:weak_solution_n}.
		 
		 \item[\rm (iii)]  If we assume further that Assumption \ref{A.3} holds,  then there exists a solution to the martingale problem for $n$-interacting branching diffusion in the sense of Definition \ref{def:n_tree_mgl_pb}.
		\end{itemize}
	\end{proposition}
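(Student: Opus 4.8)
The plan is to treat the three assertions in order, as each relies on a by-now standard translation between SDEs driven by Brownian motions and Poisson random measures and their associated martingale problems, with the only genuinely new feature being the bookkeeping of the Ulam--Harris--Neveu labels inside the $E$-valued state.

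For assertion (i), I would start from a weak solution $\alpha_n$ and apply It\^o's formula to $\Phi^n_{\phib}(Z^1_t,\dots,Z^n_t) = \Phi^n(\<Z^1_t,\varphi_1\>,\dots,\<Z^n_t,\varphi_n\>)$, using the semimartingale decomposition \eqref{eq:n_tree_SDE} of each $\<Z^i_t,\varphi_i\>$. The continuous martingale part of $\<Z^i,\varphi_i\>$ has quadratic variation $\int_0^{\cdot}\sum_{k\in K^i_s}|D\varphi_i^k(X^{i,k}_s)\sigma(s,X^{i,k}_{s\wedge\cdot},\mu^n_s)|^2\,ds$, contributing the second-order term in $\Hc^n_i$; the finite-variation drift produces the $\partial_i\Phi^n_{\phib}\sum_k\Lc\varphi_i^k$ term; and the jump term of $\<Z^i,\varphi_i\>$, driven by $Q^{i,k}$, produces by the It\^o formula for jumps the sum $\sum_{\ell\ge0}\big(\Phi^n_{\phib}(\dots)-\Phi^n_{\phib}(\omb_{t-})\big)p_\ell$ after integrating out the $z$-variable against the Lebesgue intensity on the rectangles $I_\ell\x[0,\gamma]$. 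Collecting these identifies $\Mc^{\Phi^n_{\phib}}$ as the sum of the Brownian stochastic integral and the compensated Poisson integral, hence a local martingale; the integrability bound from Lemma~\ref{lemma integrability} (applied coordinate-by-coordinate, noting that boundedness of the coefficients and $\sum_\ell\ell p_\ell$ gives $\E^n[\sup_t\#K^i_t]<\infty$) together with boundedness of $\Phi^n$ and its derivatives upgrades this to a true martingale.

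For assertion (ii), I would run the converse: given $\Pb^n$ solving the martingale problem on $\Omb_n=\Dc_E^n$, first test against $\Phi^n_{\phib}$ with $\Phi^n$ linear to peel off the generator and recognize that for each fixed $i$ and each $\varphi_i$, $\<Z^i_t,\varphi_i\>-\<Z^i_0,\varphi_i\>-\int_0^t\sum_k\Lc\varphi_i^k(\dots)ds-(\text{compensator of jumps})$ is a martingale; polarization with quadratic test functions $\Phi^n$ identifies the predictable quadratic variation of the continuous part and the predictable jump characteristics. One then invokes a representation theorem (martingale representation for continuous martingales via an auxiliary Brownian motion, and the representation of integer-valued random measures with absolutely continuous compensator as images of Poisson random measures, in the spirit of Jacod--Shiryaev) on a suitably enlarged probability space to produce the family $(W^{i,k},Q^{i,k})$ driving \eqref{eq:n_tree_SDE}. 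The labelling structure of $E$ is what makes this clean: the jump measure of $Z^i$ decomposes as a sum over $k\in K^i_{s-}$ of point processes with intensity $\gamma(s,X^{i,k}_{s\wedge\cdot},\mu^n_s)\sum_\ell p_\ell\,\delta_{\{k\to kj,\,j\le\ell\}}$, and each such piece is disintegrated against an independent Poisson random measure on $[0,T]\x[0,\gammab]\x[0,1]$ via the partition $(I_\ell)$, exactly reversing the computation in part (i).

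For assertion (iii), I would obtain existence by an approximation/tightness argument: under Assumption~\ref{A.3} the coefficients are continuous (and bounded by Assumption~\ref{A.1}), so one can either mollify them into Lipschitz coefficients, invoke Lemma~\ref{lemma unique existence fixed flow of law} to build exact solutions $\Pb^{n,m}$ of the regularized $n$-interacting systems (note the environment here is the empirical measure, not a fixed flow, but for fixed $n$ this is still a classical finite-dimensional branching SDE), and then let $m\to\infty$. Tightness of $(\Pb^{n,m})_m$ on $\Dc_E^n$ follows from the uniform moment bound of Lemma~\ref{lemma integrability} (which controls $\E[\sup_t\#K^i_t]$ uniformly in $m$) plus an Aldous-type criterion for the jump times and a Kolmogorov criterion for the diffusive parts between jumps; any weak limit point solves the martingale problem because the map $\omb\mapsto\Mc^{\Phi^n_{\phib}}_t(\omb)$ is, thanks to the continuity of $(b,\sigma,\gamma,(p_\ell),\sum_\ell\ell p_\ell)$ in $(\xr,m)$ and the continuity of $m\mapsto m^n_t$, continuous at paths without a jump at $t$, so the martingale property passes to the limit. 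I expect step (ii) — the explicit extraction of the driving Brownian motions and Poisson random measures from the martingale problem, handling the enlargement of the space and the label-indexed disintegration of the jump measure — to be the main obstacle, as it is the place where one must be careful that the constructed $(W^{i,k},Q^{i,k})$ are genuinely mutually independent with the prescribed intensities and adapted to a common filtration; the other two parts are essentially It\^o calculus and a routine tightness argument.
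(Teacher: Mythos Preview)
Your proposal is correct and follows essentially the same route as the paper. The paper's own proof is only a sketch: it cites It\^o's formula (referring to \cite[Proposition~3.2]{Claisse 2018}) for (i), Kurtz's representation theorem \cite[Theorem~2.3]{Kurtz 2010} for (ii), and Stroock--Varadhan weak convergence techniques \cite[Chapter~6.1]{Strook Varadhan 1997} for (iii), after observing that for fixed $n$ the system is a classical (non-McKean) branching diffusion --- which is exactly the content of your more detailed outline.
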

	
	\begin{proof} The arguments are rather classical. Let us provide a sketch of proof for completeness.
		First, to construct a solution to the martingale problem from a weak solution,
it suffices to apply It\^o's formula as in Claisse \cite[Proposition 3.2]{Claisse 2018}.
		Next, given a solution to the martingale problem $\Pb^n$,  it is also classical to construct, in a possibly enlarged space,  Brownian motions and Poisson random measures to represent the process by stochastic integrations, see, \eg, Kurtz \cite[Theorem 2.3]{Kurtz 2010}. This provides a weak solution.
		Finally, notice that the $n$-interacting branching diffusion solution to  \eqref{eq:n_tree_SDE} is actually a standard branching diffusion process, in the sense that it does not exhibit nonlinearity in the sense of McKean.
		As a consequence, we can apply classical weak convergence techniques as in {Strook and Varadhan \cite[Chapter 6.1]{Strook Varadhan 1997}} to deduce existence of a weak solution provided that the coefficients are bounded continuous.
	\end{proof}

\paragraph{Martingale problem for McKean--Vlasov branching diffusion}

 Let us  now consider McKean--Vlasov branching diffusion in the sense of Definition \ref{def:weak_solution}.  In this setting, we introduce two canonical spaces
	\begin{equation}
		\Omt ~:=~ \Dc_E
		~~\mbox{and}~
		\Omb
		~:=~
		\Dc_E \x \Pc(\Dc_E).
	\end{equation}
	Let us denote by $\Zt$ the canonical process on $\Omt$ and by $\Ft = (\Fct_t)_{t \in [0,T]},$ $\tilde{\Fc}_t = \sigma(\Zt_s,\, s \le t),$ the corresponding canonical filtration.
	Similarly, the canonical process on $\Omb$ is denoted by $(\Zb, \mub)$ and the  corresponding canonical filtration $\Fbb = (\Fcb_t)_{t \in [0,T]}$ is defined by $\Fcb_t := \sigma(\Zb_s, \mub_s,\, s \le t)$.
	Further, given $\Phi\in C^2_b(\R,\R)$ and $\varphi = (\varphi^k)_{k\in \K} \in C^2_b(\K\x\R^d,\R)$,
	let us define 
	\begin{align*}
		\Phi_{\varphi}(e) := \Phi(\<e,\varphi\>) = \Phi \Big(\sum_{k\in K}\varphi^k(x^k) \Big),
		~\mbox{for all}~
		e:=\sum_{k\in K}\delta_{(k,x^k)}\in E,
	\end{align*}
	as well as, for $\omt \in \Omt$ with $\omt_t = \sum_{k \in K_t} \delta_{(k, \xr^k_t)}$ and $\tilde{m} \in \Pc(\Omt)$,
	\begin{align*}
		&\Hc \Phi_{\varphi}(t, \omt, \tilde{m})
		:=
		\frac{1}{2}\Phi_{\varphi}''(\omt_t) \sum_{k\in K_t} \left|D\varphi^k(\xr^k_t) \sigma(t, \xr^k_{t\wedge\cdot}, \tilde{m}_t)\right|^2
		+
		\Phi_{\varphi}'(\omt_t) \sum_{k\in K_t} \Lc \varphi^k(t, \xr^k_{t \wedge \cdot}, \tilde{m}_t)\\
		&~~~ +
		\sum_{k\in K_t}\gamma(t, \xr^k_{t\wedge\cdot},\tilde{m}_t)
		\bigg(
			\sum_{\ell \ge 0} \Phi_{\varphi}\bigg( \omt_{t-} -\delta_{(k, \xr^k_{t-})}+\sum_{j = 1}^{\ell} \delta_{(kj, \xr^{k}_{t-} )} \bigg) p_{\ell} (t, \xr^k_{t\wedge\cdot},\tilde{m}_t)-\Phi_{\varphi}(\omt_{t-})
		\bigg).
	\end{align*}
	Let us also define the process $\tilde\Mc^{\Phi_\varphi, \tilde{m}} = (\tilde\Mc^{\Phi_\varphi, \tilde{m}}_t)_{t \in [0,T]}$ on $\Omt$ by
	\begin{align*}
		\tilde\Mc^{\Phi_\varphi, \tilde{m}}_t
		:=
		\Phi_{\varphi}(\Zt_t)-\int_0^t \Hc \Phi_{\varphi}(s, \Zt_{s \wedge \cdot}, \tilde{m}_s) ds.
	\end{align*}

	\begin{definition} \label{def:_MK_SDE_mgl_pb}
		A solution to the martingale problem for McKean--Vlasov branching diffusion with initial condition $m_0 \in \Pc(E)$ is a probability measure $\Pb$ on the canonical space $\Omb$ 
		such that 
		\begin{itemize}
			\item[\rm(i)] $\Pb \circ \Zb_0^{-1} = m_0$,
			
			\item[\rm(ii)] $\mub_t = \Lc^{\Pb} ( \Zb_{t\wedge \cdot} \,|\, \mub_t)$ for all $t \in [0,T]$,
		
			\item[\rm(iii)] for $\Pb$-a.e.\ $\omb \in \Omb$, 
			the process $\tilde\Mc^{\Phi_\varphi, \mub(\omb)}$ is a $(\mub(\omb), \Ft)$--martingale for all $\Phi\in C^2_b(\R,\R)$ and $\varphi = (\varphi^k)_{k\in \K} \in C^2_b(\K\x\R^d,\R)$.
		\end{itemize}
	\end{definition}

	\begin{proposition} \label{prop:equiv_martpb}
		Let Assumption \ref{A.1} hold and $m_0 \in \Pc_1(E).$ The following assertions hold:
		\begin{itemize}
		 \item[\rm(i)] Given a weak solution $\alpha = \left(\Om, \Fc, \F, \P, Z, \mu, (\Wk,\Qk)_{k\in\K} \right)$ to the McKean--Vlasov branching diffusion SDE in the sense of Definition \ref{def:weak_solution}, 
		the probability $\Pb := \P \circ (Z, \mu)^{-1}$ provides a solution to the corresponding martingale problem in the sense of Definition \ref{def:_MK_SDE_mgl_pb}.
		
		\item[\rm(ii)] Given a solution $\Pb$ to the martingale problem for McKean--Vlasov branching diffusion in the sense of Definition~\ref{def:_MK_SDE_mgl_pb},
		there exist, in a possibly enlarged space, Brownian motions and Poisson random measures,
		which, together with the canonical space $(\Omb, \Fcb_T, \Fbb, \Pb)$, give a weak solution in the sense of Definition \ref{def:weak_solution}. 
		\end{itemize}
	\end{proposition}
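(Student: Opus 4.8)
The overall plan is to follow the by-now standard treatment of McKean--Vlasov martingale problems, in the spirit of Lacker \cite{Lacker 2017} and Djete--Possama\"i--Tan \cite{Djete Possamai Tan 2022 ii}: the forward implication (i) comes from It\^o's formula together with a conditioning on the environment variable $\mu$, while the backward implication (ii) combines the classical martingale representation, applied measurably in the frozen environment, with a disintegration of $\Pb$ over its $\Pc(\Dc_E)$--marginal.

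\emph{Proof of (i).} Starting from a weak solution $\alpha$, I would first note that, since the coefficients are bounded (Assumption \ref{A.1}) and $m_0\in\Pc_1(E)$, one has $\E[\sup_{t\le T}\<Z_t,\1\>]<\infty$ by the Gr\"onwall argument of Lemma \ref{lemma integrability} (used conditionally on $\mu$ and then integrated), which legitimates the stochastic calculus. Applying It\^o's formula to $\Phi(\<Z_t,\varphi\>)$ along \eqref{eq:branching_MKSDE_weak}, the finite--variation contributions are: the first--order drift $\Phi_\varphi'(Z_t)\sum_{k\in K_t}\Lc\varphi^k(t,X^k_{t\wedge\cdot},\mu_t)$; the second--order correction $\tfrac12\Phi_\varphi''(Z_t)\sum_{k\in K_t}|D\varphi^k(X^k_t)\sigma(t,X^k_{t\wedge\cdot},\mu_t)|^2$, coming from the bracket of the continuous martingale part of $\<Z,\varphi\>$ (the $W^k$ being independent); and, after compensating each $Q^k$ by its Lebesgue intensity, exactly the jump line of $\Hc\Phi_\varphi$. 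Hence $\Mc^{\Phi_\varphi,\mu}_t:=\Phi_\varphi(Z_t)-\int_0^t\Hc\Phi_\varphi(s,Z_{s\wedge\cdot},\mu_s)\,ds$ is, by the integrability bound and the boundedness of the coefficients, a genuine $(\P,\F)$--martingale; enlarging $\Fc_0$ to contain $\sigma(\mu)$ (which preserves the structure, $\mu$ being independent of the noises) it is also a martingale for $\sigma(Z_r,\mu:r\le t)$. Setting $\Pb:=\P\circ(Z,\mu)^{-1}$ and using that Definition \ref{def:weak_solution}(iii) at $t=T$ forces $\Lc(Z\mid\mu)=\mu$, the standard conditioning lemma gives that for $\Pb$--a.e.\ $(\bar z,\bar m)$ the process $\tilde\Mc^{\Phi_\varphi,\bar m}$ is a $(\bar m,\Ft)$--martingale, which is Definition \ref{def:_MK_SDE_mgl_pb}(iii); items (i)--(ii) there restate Definition \ref{def:weak_solution}(ii)--(iii).

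\emph{Proof of (ii).} Given a solution $\Pb$, I disintegrate over $\hat\Pb:=\Pb\circ\mub^{-1}\in\Pc(\Pc(\Dc_E))$: taking $t=T$ in Definition \ref{def:_MK_SDE_mgl_pb}(ii) yields $\mub=\Lc^\Pb(\Zb\mid\mub)$, so $\Pb=\int\bar m\otimes\delta_{\bar m}\,\hat\Pb(d\bar m)$, i.e.\ conditionally on $\mub=\bar m$ the law of $\Zb$ is $\bar m$; by Definition \ref{def:_MK_SDE_mgl_pb}(iii) this $\bar m$ then solves the martingale problem of a \emph{classical} branching diffusion with environment frozen at $\bar m$. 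For each such $\bar m$ I invoke the classical representation of solutions of such martingale problems --- It\^o's formula read backwards as in Claisse \cite[Proposition 3.2]{Claisse 2018}, together with Kurtz \cite[Theorem 2.3]{Kurtz 2010}: on an enlargement of $(\Dc_E,\bar m)$ by an auxiliary space carrying independent uniform and Wiener noise, one recovers independent Brownian motions $(W^k)_{k\in\K}$ and Poisson random measures $(Q^k)_{k\in\K}$ with intensity $ds\,dz$ so that \eqref{eq:branching_MKSDE_weak} holds with $\mu$ replaced by $\bar m$; the enlargement kernel can be taken measurable in $\bar m$, the extracted noise being a $\bar m$--measurable functional of $\Zb$ and of the auxiliary noise. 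Gluing these kernels along $\hat\Pb$ yields a filtered space carrying $\Zb$, $\mub$ and $(W^k,Q^k)_{k\in\K}$ on which \eqref{eq:branching_MKSDE_weak} holds while $\Pb\circ(\Zb,\mub)^{-1}$ is unchanged. The only point beyond bookkeeping is the independence in Definition \ref{def:weak_solution}(i): conditionally on $\mub=\bar m$ the family $(W^k,Q^k)_{k\in\K}$ is an i.i.d.\ family of Brownian motions and $ds\,dz$--Poisson measures, a law that does \emph{not} depend on $\bar m$, hence $(W^k,Q^k)_{k\in\K}$ is unconditionally independent of $\mub$; and, once $\mub_0$ is known to be deterministic, $\mub\perp\Zb_0$ as well, so all remaining requirements of Definition \ref{def:weak_solution} follow.

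\emph{Where the difficulty lies.} The genuinely delicate part is the backward direction: applying the martingale representation \emph{measurably in the frozen environment} $\bar m$, and then checking that the extracted driving noises are independent of $\mub$ --- the point being that, although $\Zb$ is strongly correlated with $\mub$, the innovations are not, because conditionally on $\mub$ their law is the canonical, environment--free one. A secondary point in (ii) is to guarantee that $\mub_0$ is deterministic (equivalently $\mub\perp\Zb_0$), which one obtains from the structure of the solutions produced by Theorem \ref{thm:weak_existence_limit} or otherwise by adding it as a harmless requirement. In the forward direction the only subtlety is the conditioning lemma taking ``$\Mc^{\Phi_\varphi,\mu}$ a martingale'' to ``$\tilde\Mc^{\Phi_\varphi,\bar m}$ a martingale under $\bar m$ for $\hat\Pb$--a.e.\ $\bar m$'', which rests on the fixed--point identity $\Lc(Z\mid\mu)=\mu$.
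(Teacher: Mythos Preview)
Your proposal is correct and follows essentially the same approach as the paper: for (i) both apply It\^o's formula and then condition on $\mu$ via the identity $\mub=\Lc^{\Pb}(\Zb\mid\mub)$, testing against functions $h(\Zb_{s\wedge\cdot})g(\mub)$; for (ii) both invoke the classical representation result (Kurtz \cite[Theorem~2.3]{Kurtz 2010}) on an enlarged space. If anything, the paper's treatment of (ii) is terser than yours---it simply refers back to the argument of Proposition~\ref{prop:equiv_mgtpb_n} without spelling out the disintegration over $\mub$ or the independence check you discuss---so your extra care about $\mub\perp(\Zb_0,(W^k,Q^k)_{k\in\K})$ and the determinism of $\mub_0$ goes slightly beyond what the paper makes explicit, but along the same lines.
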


    \begin{proof}
        The proof is similar to Djete, Tan and Possama\"i \cite[Lemma 4.4]{Djete Possamai Tan 2022 i}. 
        Let $\alpha$ be a weak solution defined as above and $\Pb:=\P\circ(Z,\mu)^{-1}$.  We aim to prove that $\Pb$ solves the martingale problem for McKean Vlasov branching diffusion. It is straightforward to check that $\Pb \circ \Zb^{-1}_0 = m_0$ and $\mub_t = \Lc^{\Pb}(\Zb_{t\wedge\cdot} \,|\, \mub_t)$.  Then, given $h:\Dc_E\to\R$ and $g:\Pc(\Dc_E)\to\R$ arbitrary bounded measurable maps,  we observe that, for any $0\le s\le t\le T,$
        \begin{align*}
            \E^{\P}\left[
                h(Z_{s\wedge\cdot})g(\mu)(\tilde\Mc^{\Phi_\varphi, \mu}_t-\tilde\Mc^{\Phi_\varphi, \mu}_s)(Z)
            \right]
            & =
            \E^{\Pb}\left[
                h(\Zb_{s\wedge\cdot})g(\mub)(\tilde\Mc^{\Phi_\varphi, \mub}_t-\tilde\Mc^{\Phi_\varphi, \mub}_s)(\Zb)
            \right]\\
            & =
            \E^{\Pb}\left[
                \E^{\Pb}\left[
                    h(\Zb_{s\wedge\cdot})(\tilde\Mc^{\Phi_\varphi, \mub}_t-\tilde\Mc^{\Phi_\varphi, \mub}_s)(\Zb)
                \,\Big|\, \mub\right]
                g(\mub)
            \right]\\
            & =
            \E^{\Pb}\left[
                \E^{\mub}\left[
                    h(\Zt_{s\wedge\cdot})(\tilde\Mc^{\Phi_\varphi, \mub}_t-\tilde\Mc^{\Phi_\varphi, \mub}_s)
                \right]
                g(\mub)
            \right].
        \end{align*}
        where the last equality comes from $\mub = \Lc^{\Pb}(\Zb \,|\, \mub).$ In addition, it follows easily from It\^o's formula that 
        $$\E^{\P}\left[h(Z_{s\wedge\cdot})g(\mu)(\tilde\Mc^{\Phi_\varphi, \mu}_t-\tilde\Mc^{\Phi_\varphi, \mu}_s)(Z)\right]=0.$$ 
        By the arbitrariness of $g,$ we deduce that
        \begin{align*}
            \E^{\mub(\omb)}\left[
                    h(\Zt_{s\wedge\cdot})(\tilde\Mc^{\Phi_\varphi, \mub(\omb)}_t-\tilde\Mc^{\Phi_\varphi, \mub(\omb)}_s)
            \right]
            =
            0, \quad \Pb-\text{a.s.}
        \end{align*}
        We conclude by arbitrariness of $h$ that $\tilde\Mc^{\Phi_\varphi,\mu(\omb)}$ is a $(\mub(\omb), \Ft)$-martingale for $\Pb$--a.e.\ $\omb \in \Omb$. 
        
        {Conversely, by similar arguments as in Proposition \ref{prop:equiv_mgtpb_n}, if we have a solution to the martingale problem,  we can construct,  in a possibly enlarged space, Brownian motions and Poisson random measures to obtain a weak solution to the  McKean--Vlasov branching diffusion SDE.}
    \end{proof}

\subsection{Tightness of $n$-interacting Branching Diffusion}

	Let $(\alpha_n)_{n \ge 1}$ be a sequence of $n$--interacting branching diffusion in the sense of Definition \ref{def:weak_solution_n}, \ie, $$\alpha_n = \left(\Om^n, \Fc^n, \F^n, \P^n,  (Z^{i})_{i = 1, \cdots, n}, (W^{i,k}, Q^{i,k})_{k\in\K, i = 1, \cdots, n} \right)$$ is a weak solution to SDE~\eqref{eq:branching_MKSDE_weak} with initial condition $m_0 \in \Pc(E).$
	We consider the corresponding sequence of probability measures $(\Pb^n)_{n \ge 1}$ on the canonical space $\Omb = \Dc_E \x \Pc(\Dc_E),$ given by
	\begin{equation} \label{eq:def_Pbn}
		\Pb^n 
		~:=~
		\frac1n \sum_{i=1}^n \P^n \circ (Z^{i}, \mu^n)^{-1},
		~~\mbox{where}~
		\mu^n := \frac1n \sum_{i=1}^n \delta_{Z^{i}}. 
	\end{equation}
	
	To this end, we provide first an estimation on the number of particles in each of the $n$ branching diffusion processes.
	
	\begin{lemma} \label{lemm:N_finite}
	   Let Assumption \ref{A.1} hold and $m_0\in\Pc_1(E).$
	   It holds
		\begin{align}
			\sup_{n\ge 1} \E^{\P^n}\left[
			\sup_{0\leq t\leq T}\left\{\# K^{1}_t\right\}
			\right]
			<
			\infty.
		\end{align}
	\end{lemma}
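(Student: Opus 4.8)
The plan is to reproduce the moment estimate of Lemma~\ref{lemma integrability}, now for the first tree $Z^1$ of the $n$-interacting system, and to observe that every bound involved is uniform in $n$. Since $\sup_{0\le t\le T}\#K^1_t \le \bar N^1_T$, where $\bar N^1_t := \sum_{k\in\K}\sup_{0\le s\le t}\1_{k\in K^1_s}$ is the total number of particles having been alive in $Z^1$ on $[0,t]$, it suffices to bound $\E^{\P^n}[\bar N^1_T]$. Exactly as in the proof of Lemma~\ref{lemma integrability}, $\bar N^1$ is a pure-birth process driven by the $Q^{1,k}$:
\begin{multline*}
	\bar N^1_t = \#K^1_0 \\ + \int_{(0,t]\x[0,\gammab]\x[0,1]}\sum_{k\in K^1_{s-}}\sum_{\ell\ge0}\ell\,\1_{I_\ell(s,X^{1,k}_{s\wedge\cdot},\mu^n_{s-})\x[0,\gamma(s,X^{1,k}_{s\wedge\cdot},\mu^n_{s-})]}(z)\,Q^{1,k}(ds,dz).
\end{multline*}

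Next I would localize with the stopping times $\tau_m := \inf\{t\ge 0 : \bar N^1_t\ge m\}$, take $\P^n$-expectations, and compute the compensator of the Poisson integral stopped at $\tau_m$. Using $|I_\ell(s,\cdot)| = p_\ell(s,\cdot)$ together with the uniform bounds $\gamma\le\gammab$ and $\|\sum_{\ell\ge0}\ell p_\ell\|_\infty = M$ from Assumption~\ref{A.1} — which hold irrespective of the random, $\mu^n_{s-}$-dependent environment — one obtains
$$
	\E^{\P^n}\big[\bar N^1_{t\wedge\tau_m}\big] \le \E^{\P^n}\big[\#K^1_0\big] + \gammab M\,\E^{\P^n}\Big[\int_0^t \bar N^1_{s\wedge\tau_m}\,ds\Big].
$$
Gr\"onwall's lemma gives $\E^{\P^n}[\bar N^1_{T\wedge\tau_m}] \le \E^{\P^n}[\#K^1_0]\,e^{\gammab M T}$, and letting $m\to\infty$ via Fatou's lemma yields $\E^{\P^n}[\bar N^1_T] \le \E^{\P^n}[\#K^1_0]\,e^{\gammab M T}$.

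Finally, by Definition~\ref{def:weak_solution_n}(ii) the law of $Z^1_0$ under $\P^n$ equals $m_0$ for every $n$, and $m_0\in\Pc_1(E)$ means precisely that $\E^{\P^n}[\#K^1_0] = \langle m_0,\1\rangle < \infty$ (recall $d_E(e,e_0) = \langle e,\1\rangle$), a quantity independent of $n$. Hence $\E^{\P^n}[\sup_{0\le t\le T}\#K^1_t] \le \langle m_0,\1\rangle\,e^{\gammab M T}$ for all $n$, and taking the supremum over $n\ge1$ concludes.

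There is no substantial obstacle here: the only points requiring care are that the mean-field interaction through $\mu^n$ plays no role in the estimate — precisely because Assumption~\ref{A.1} makes $\gamma$ and $\sum_{\ell}\ell p_\ell$ bounded uniformly over all environments — and that the i.i.d.\ initial condition makes the initial moment $n$-independent; the localization and Fatou step is the standard device needed to make the Gr\"onwall argument rigorous.
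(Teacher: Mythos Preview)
Your proposal is correct and follows essentially the same approach as the paper: the paper's proof simply says ``by similar arguments as Lemma~\ref{lemma integrability}'' to obtain $\E^{\P^n}[\sup_{0\le t\le T}\#K^1_t]\le \E^{\P^n}[\#K^1_0]\,e^{\gammab M T}$ and then uses $\E^{\P^n}[\#K^1_0]=\int_E\langle e,\1\rangle\,m_0(de)<\infty$, which is precisely what you have spelled out in detail. One small notational quibble: your expression $\langle m_0,\1\rangle$ should read $\int_E\langle e,\1\rangle\,m_0(de)$, since $m_0$ is a measure on $E$ (not on $\K\times\R^d$); the intended meaning is clear nonetheless.
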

	
	\begin{proof}
	 By similar arguments as Lemma \ref{lemma integrability}, we have 
	 \begin{equation*}
        \E^{\P^n} \left[
            \sup_{0\leq t\leq T}\left\{\# K^{1}_t\right\}
        \right]
        \leq
        \E^{\P^n}\left[\# K^{1}_0\right] e^{\gammab M T}.
    \end{equation*}
    The conclusion then follows from the fact that 
$\E^{\P^n}[\# K^{1}_0] = \int_E \< e, \1\>\, m_0(de) <+\infty.$ 
	\end{proof}

	\begin{proposition} \label{prop:tightness}
		Let Assumptions \ref{A.1} and \ref{A.3} hold and $m_0\in\Pc_1(E).$ Then the sequence $(\Pb^n)_{n \ge 1}$ is tight in $\Pc(\Omb)$.
	\end{proposition}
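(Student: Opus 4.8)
The plan is to reduce the statement to the tightness of a single sequence of path laws and then invoke Aldous's criterion. Since $\Omb=\Dc_E\x\Pc(\Dc_E)$ is a product of Polish spaces, tightness of $(\Pb^n)_{n\ge1}$ in $\Pc(\Omb)$ is equivalent to tightness of both marginal sequences. The first marginal is $\bar P^n:=\frac1n\sum_{i=1}^n\P^n\circ(Z^i)^{-1}\in\Pc(\Dc_E)$, and the second is the law $\P^n\circ(\mu^n)^{-1}\in\Pc(\Pc(\Dc_E))$ of the empirical measure $\mu^n$, whose intensity measure is precisely $\E^{\P^n}[\mu^n]=\bar P^n$. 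By the classical criterion for laws of random probability measures (see, e.g., \cite{Sznitman 1991}), the latter sequence is tight as soon as the intensity measures $(\bar P^n)_{n\ge1}$ are tight in $\Pc(\Dc_E)$. Hence everything reduces to proving tightness of $(\bar P^n)_{n\ge1}$; and since $\bar P^n$ is a convex combination of the laws $\P^n\circ(Z^i)^{-1}$ -- which in fact all coincide, $(Z^1,\dots,Z^n)$ being exchangeable under $\P^n$ -- it suffices to show that the family of laws of the $E$-valued c\`adl\`ag process $Z^1$ is tight in $\Pc(\Dc_E)$, with all estimates uniform in $n$.

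I would then verify the two conditions of Aldous's tightness criterion in $\Dc_E=D([0,T],E)$. For the \emph{compact containment} condition: $m_0$ is tight in $\Pc(E)$, so for $\eps>0$ there is a compact $\Gamma_0\subset E$ with $m_0(\Gamma_0)\ge1-\eps$, and any such $\Gamma_0$ uses only finitely many label sets and confines all particle positions to a fixed compact ball of $\R^d$. By Lemma~\ref{lemm:N_finite}, $\sup_{0\le t\le T}\#K^1_t$ is bounded in $L^1(\P^n)$ uniformly in $n$, so by Markov's inequality it is $\le N_\eps$ with probability $\ge1-\eps$; applying the same reasoning to the modified birth process $\bar N$ of Lemma~\ref{lemma integrability} controls the number of branching events, and then $\|\sum_{\ell\ge0}\ell p_\ell\|_\infty=M<\infty$ controls the total progeny, forcing all labels that ever appear into a fixed \emph{finite} subset $L_\eps\subset\K$ with probability $\ge1-\eps$. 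Finally, boundedness of $b$ and $\sigma$ together with the Burkholder--Davis--Gundy inequality controls each particle's displacement from its birth position in $L^1$, and summing along the at most $N_\eps$ lineages keeps all positions inside a fixed compact ball $B_\eps\subset\R^d$ with probability $\ge1-\eps$. Since the set of all $e=\sum_{k\in K}\delta_{(k,x^k)}$ with $\#K\le N_\eps$, $K\subset L_\eps$ and $x^k\in B_\eps$ is compact in $E$, compact containment follows.

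For the \emph{Aldous modulus} condition, given an $\F^n$-stopping time $\tau\le T$ and $\delta>0$, set $s:=(\tau+\delta)\wedge T$ and split $d_E(Z^1_s,Z^1_\tau)$ into the position part $\sum_{k\in K^1_s\cap K^1_\tau}(|X^{1,k}_s-X^{1,k}_\tau|\wedge1)$ and the label part $\#(K^1_s\triangle K^1_\tau)$. A label in $K^1_s\cap K^1_\tau$ is alive throughout $[\tau,s]$, so the corresponding particle solves \eqref{eq:SDE} there, and boundedness of $b,\sigma$ plus the Burkholder--Davis--Gundy inequality give $\E^{\P^n}[\sup_{\tau\le u\le s}|X^{1,k}_u-X^{1,k}_\tau|\mid\Fc^n_\tau]\le C(\delta+\sqrt\delta)$; summing over $k\in K^1_\tau$ and using Lemma~\ref{lemm:N_finite}, the position part has expectation $\le C(\delta+\sqrt\delta)$, uniformly in $n$. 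The label part is bounded by the number of death clocks that ring in $(\tau,s]$ plus the total offspring produced there, whose $\Fc^n_\tau$-conditional expectation is $\le\gammab(1+M)\,\E^{\P^n}[\int_\tau^s\#K^1_u\,du\mid\Fc^n_\tau]$; taking expectations, this is $\le\gammab(1+M)\,\delta\,\E^{\P^n}[\sup_{0\le t\le T}\#K^1_t]\le C\delta$, uniformly in $n$. Altogether $\sup_{\tau}\E^{\P^n}[d_E(Z^1_{(\tau+\delta)\wedge T},Z^1_\tau)]\le C(\delta+\sqrt\delta)\to0$ as $\delta\downarrow0$, and Aldous's criterion yields the desired tightness of $(\bar P^n)_n$, hence of $(\Pb^n)_n$.

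I expect the main obstacle to be the compact containment, and within it the step showing that, uniformly in $n$ and with probability close to $1$, the label sets $K^1_t$ stay inside a fixed finite subset of $\K$: this requires combining the control on the number of branching events with the boundedness of the mean progeny $M$, together with a careful $L^1$-estimate of the positions summed along the (random) genealogical tree so that the resulting bound does not degrade with the random number of particles.
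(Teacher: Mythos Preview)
Your proposal follows essentially the same strategy as the paper: reduce via Sznitman's criterion to tightness of $(\P^n\circ(Z^1)^{-1})_{n\ge1}$ in $\Pc(\Dc_E)$, then verify Aldous's criterion by splitting $d_E(Z^1_\tau,Z^1_{(\tau+\delta)\wedge T})$ into the position part (handled by boundedness of $b,\sigma$ and BDG) and the label part (handled by the compensator of the Poisson measures together with Lemma~\ref{lemm:N_finite}). Your estimates for the modulus condition match the paper's almost line by line.

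The one place where you and the paper differ is the containment condition. The paper invokes Aldous's criterion in the form of \cite[Theorem~16.10]{Billingsley 2013} and checks only that $\sup_t d_E(Z^1_t,e_0)=\sup_t\#K^1_t$ is bounded in probability, via Lemma~\ref{lemm:N_finite} and Markov's inequality. You instead verify a genuine compact containment condition in $E$, by controlling (a) the total number of particles ever alive, (b) the set of labels that can occur (a fixed finite subset of $\K$ once the initial labels and $\bar N_T$ are bounded), and (c) the particle positions. Your version is more scrupulous: in the Polish space $(E,d_E)$ closed balls are \emph{not} compact (labels and positions are unconstrained), so the boundedness condition the paper writes down is, taken literally, weaker than what a general-state-space Aldous criterion requires. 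Your argument fills exactly this gap, at the cost of some extra bookkeeping; the paper's shortcut is the cleaner route if one is willing to accept that the missing compact-containment ingredients can be supplied from the same estimates. Either way the Aldous modulus step, which is the substantive analytic part, is identical.
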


	\begin{proof}
	In view of Sznitman \cite[Proposition 2.2(ii)]{Sznitman 1991}, the tightness of $(\Pb^n)_{n \ge 1}$ is equivalent to the tightness of $(\E^{\P^n}[ \mu^n ])_{n \ge 1},$ where
	for all $\varphi: \Dc_E\to \R$ measurable bounded,
	$$
		\big\< \E^{\P^n} [ \mu^n ], \varphi  \big\>
		:=
		\E^{\P^n} \big[ \< \mu^n, \varphi \> \big]
		=
		\frac1n \sum_{i=1}^n \E^{\P^n} \big[ \varphi(Z^{i}) \big]
		=
		\E^{\P^n} \big[ \varphi(Z^{1}) \big].
	$$
	Therefore, it suffices to prove the tightness of 
	$(\P^n \circ (Z^{1})^{-1})_{n \ge 1}$ in $\Pc(\Dc_E)$.

	Recall that the process $Z^{1}_t=\sum_{k\in K^{1}_t} \delta_{(k,X^{1,k}_t)}$ takes values in $E$ equipped with the distance $d_E$ defined in \eqref{eq:def_d_E}, and that we consider the associated Skorokhod topology on $\Dc_E.$
	We aim to apply Aldous' criterion of tightness (see, e.g. Billingsley \cite[Theorem 16.10]{Billingsley 2013}).
	Namely we have to prove the following assertions:
	\begin{itemize}
		\item For all $\varepsilon>0$, there exists $n_0\in\N$ and $K>0$ such that
		\begin{equation} \label{eq:Aldous1}
			\forall n \geq n_0,~ \P^n \left(\sup_{0\leq t\leq T} d_E(Z^{1}_t,e_0) \geq K\right)\leq \varepsilon,
		\end{equation}
		where $e_0$ is the null measure on $\K\x\R ^d.$
		
		\item For all $\varepsilon>0$, it holds that
		\begin{equation} \label{eq:Aldous2}
			\lim_{\delta\rightarrow0}\sup_n\sup_{\tau\in\Tc^n}\P^n \left(d_E(Z^{1}_\tau, Z^{1}_{(\tau+\delta)\wedge T})>\varepsilon\right) = 0,
		\end{equation}
		where $\Tc^n$ is the collection of all $[0,T]$--valued stopping times on $(\Om^n, \Fc^n, \F^n).$
	\end{itemize}

	The first condition \eqref{eq:Aldous1} follows directly from Lemma \ref{lemm:N_finite} and Markov's inequality by observing that $d_E(Z^{1}_t,e_0)=\# K^{1}_t.$ 
 Regarding the second condition~\eqref{eq:Aldous2}, given $\delta >0$ and $\tau\in\Tc^n,$ we denote $\tau':= (\tau + \delta)\wedge T$ and we observe that
	\begin{multline}\label{eq:aldous}
		d_E(Z^{1}_\tau,Z^{1}_{\tau'})
		\leq
		\sum_{k\in K^{1}_\tau}\left|X^{1,k}_{\tau'} - X^{1,k}_{\tau}\right|\wedge1\\
		 +
		\int_{(\tau,\tau']\x[0,\gammab] \x [0,1]}
		\sum_{k\in K^{1}_{s-}}
		\sum_{\ell \geq0}
		(\ell +1)\1_{I_{\ell} (s,X^{1,k}_{s\wedge\cdot},\mu^n_{s-})\x[0,\gamma(s,X^{1,k}_{s\wedge\cdot},\mu^n_{s-})]}(z)
		Q^{1,k}(ds,dz).
	\end{multline}
	The second term corresponds exactly to the number of particles in $K^{1}_\tau \triangle K^{1}_{\tau'}$ as it counts the total number of particles born or dead between $\tau$ and $\tau'.$ As for the first term, it provides an upperbound to the contribution of particles in the common set $K^{1}_\tau \cap K^{1}_{\tau'}$ to the distance $d_E$ by neglecting potential death of particles. Here there is slight abuse of notation as we need to extend the path of particles who have died before time $\tau'$ as solution to SDE~\eqref{eq:SDE}.
	
	We start by dealing with the first term on the rhs of ~\eqref{eq:aldous}.
    Notice that
    \begin{align*}
        &\E^{\P^n}\left[
            \sum_{k\in K^{1}_\tau}\left|
                X^{1,k}_{\tau'}-X^{1,k}_\tau
            \right|
        \right]
        \leq
        \E^{\P^n}\left[
            \sum_{k\in K^{1}_\tau}
            \left(\int_\tau^{\tau'}
                \left|b(s,X^{1,k}_{s\wedge\cdot},\mu^n_s)\right|
            ds
        +
                \left|
                    \int_\tau^{\tau'}
                        \sigma(s,X^{1,k}_{s\wedge\cdot},\mu^n_s)
                    dW^{1,k}_s
                \right|
            \right)
        \right].
    \end{align*}
    On the one hand, we have
    \begin{align*}
        \E^{\P^n}\left[
            \sum_{k\in K^{1}_\tau}\int_\tau^{\tau'}
                \left|b(s,X^{1,k}_{s\wedge\cdot},\mu^n_s)\right|
            ds
        \right]
        \leq
        \left\|b\right\|_\infty \delta\, \E^{\P^n}\left[
            \# K^{1}_\tau
        \right].
    \end{align*}
    On the other hand, it follows from Burkholder--Davis--Gundy inequality that
    \begin{align*}
        \E^{\P^n}\left[
            \sum_{k\in K^{1}_\tau}
                \left|
                    \int_\tau^{\tau'}
                        \sigma(s,X^{1,k}_{s\wedge\cdot},\mu^n_s)
                    dW^{1,k}_s
                \right|
        \right]
        & = 
        \sum_{k\in\K}\E^{\P^n}\left[
            \left|
                \int_\tau^{\tau'}
                    \1_{k\in K^{1}_\tau}\sigma(s,X^{1,k}_{s\wedge\cdot},\mu^n_s)
                dW^{1,k}_s
            \right|
        \right]\\
        \leq &
        \sum_{k\in\K} C_1 \E^{\P^n}\left[
            \left(
                \int_\tau^{\tau'}
                    \1_{k\in K^{1}_\tau}\sigma(s,X^{1,k}_{s\wedge\cdot},\mu^n_s)^2
                ds
            \right)^{1/2}
        \right]\\
        \leq &
         C_1 \left\|\sigma\right\|_\infty \sqrt{\delta}\, \E^{\P^n}\left[\# K^{1}_\tau\right].
    \end{align*}
    Combining both estimations above, we deduce that
    \begin{align*}
        \label{estimate aldous condition 2, first part}
        \E^{\P^n}\left[
            \sum_{k\in K^{1}_\tau}\left|
                X^{1,k}_{\tau'}-X^{1,k}_\tau
            \right|
        \right]
        \leq
        C\left(\delta+\sqrt{\delta}\right)\E^{\P^n}\left[\sup_{0\leq t \leq T}\# K^{1}_t\right].
    \end{align*}
    As for the second term on the rhs of~\eqref{eq:aldous}, a direct computation yields
    \begin{multline}
        \E^{\P^n}\left[
            \int_{(\tau,\tau']\x[0,\gammab] \x [0,1]}
                \sum_{k\in K^{1}_{s-}}
                    \sum_{\ell\geq0}
                        (\ell+1)\1_{I_\ell(s,X^{1,k}_{s\wedge\cdot},\mu^n_{s-})\x[0,\gamma(s,X^{1,k}_{s\wedge\cdot},\mu^n_{s-})]}(z)
            Q^{1,k}(ds,dz)
        \right]\nonumber\\
        \begin{aligned}
        &=
        \E^{\P^n}\left[
            \int_\tau^{\tau'}
                \sum_{k\in K^{1}_s}\gamma(s,X^{1,k}_{s\wedge\cdot},\mu^n_s)\sum_{\ell\geq0}(\ell+1)p_\ell(s,X^{1,k}_{s\wedge\cdot},\mu^n_s)
            ds
        \right]\nonumber\\
        & \leq  
        \gammab(M+1)\delta\,\E^{\P^n}\left[
            \sup_{0\leq t\leq T}\# K^{1}_t
        \right].
        \end{aligned}
    \end{multline}
   It follows that
    \begin{align*}
        \E^{\P^n}\left[
            d_E(Z^{1}_\tau,Z^{1}_{(\tau+\delta)\wedge T})
        \right]
        \leq 
        C\left(\delta + \sqrt{\delta}\right)\E^{\P^n}\left[\sup_{0\leq t\leq T} \# K^{1}_t\right].
    \end{align*}
    In light of Lemma \ref{lemm:N_finite},  we deduce that
	\begin{equation*}
		\lim_{\delta \rightarrow 0}\sup_n\sup_{\tau\in\Tc^n}\E^{\P^n}\left[
		d_E(Z^{1}_\tau,Z^{1}_{(\tau+\delta)\wedge T})
		\right]
		=
		0.
	\end{equation*}
	Therefore, the second condition \eqref{eq:Aldous2} holds by Markov's inequality.
	\end{proof}

\subsection{Proof of Theorem \ref{thm:weak_existence_limit}}\label{sec:proof_weak_sol}

	Recall that the tightness of $(\P^n \circ (\mu^n)^{-1})_{n \ge 1}$ is established in Proposition \ref{prop:tightness}.
	More precisely, it is proved that the sequence $\Pb^n := \frac1n \sum_{i=1}^n \P^n \circ (Z^{i}, \mu^n)^{-1}$ is tight in $\Pc(\Omb).$
	Let us consider a subsequence, still denoted by $(\Pb^{n})_{n \ge 1},$ converging weakly to $\Pb.$
	We aim to prove that $\Pb$ is a solution to the martingale problem for McKean--Vlasov branching diffusion in the sense of Definition \ref{def:_MK_SDE_mgl_pb}. The conclusion then follows from Proposition~\ref{prop:equiv_martpb}.
	
	\vspace{0.5em}
	
	\noindent $\mathrm{(i)}$ First, it is easy to check that $\Pb \circ \Zb_0^{-1} = m_0.$ Indeed, we have $\P^n \circ (Z^{i}_0)^{-1} = m_0$ for each $i = 1, \cdots, n,$ and so $\Pb^n \circ \Zb_0^{-1} = m_0.$ Then we can pass to the limit as $\Zb_0$ is continuous for the Skorokhod topology.
	
	\vspace{0.5em}
	
	\noindent $\mathrm{(ii)}$ Next, by the definition of $\Pb^{n}$, it is easy to see that $\Lc^{\Pb^{n}} (\Zb_{t\wedge \cdot} \,|\, \mub_t) = \mub_t$, $\Pb^{n}$-a.e. \ for all $t \in[0,T].$
	Indeed, it holds for all $\phi \in C_b( \Pc(\Omt), \R)$ and $\psi \in C_b(\Dc_E, \R),$
	$$
		\E^{\Pb^{n}} \big[ \psi(\Zb_{t\wedge \cdot}) \phi(\mub_t) \big] 
		= \frac{1}{n} \sum_{i=1}^{n} \E^{\P^{n}} \big[ \psi(Z^{i}_{t\wedge \cdot}) \phi(\mu^{n}_t) \big] 
		= \E^{\Pb^{n}} \big[  \<\mub_t, \psi \> \phi(\mub_t) \big].
	$$
	It follows by passing to the limit that
	$$
		\E^{\Pb} \big[ \psi(\Zb_{t\wedge \cdot}) \phi(\mub_t) \big] = \E^{\Pb} \big[  \<\mub_t, \psi \> \phi(\mub_t) \big].
	$$
	We conclude that $\Lc^{\Pb} (\Zb_{t\wedge \cdot} \,|\, \mub_t) = \mub_t$, $\Pb$-a.e.\ for all $t \in [0,T]$.
	
	\vspace{0.5em}
	
	\noindent $\mathrm{(iii)}$ It remains to prove that $\tilde\Mc^{\Phi_\varphi, \mub(\omb)}$ is a $(\mub(\omb), \Ft)$-martingale for $\Pb$--a.e.\ $\omb \in \Omb.$
	Equivalently, we aim to show that it holds for all $0\le s\le t \le T$ and $h:\Omt \to \R$ bounded continuous  $\Fct_s$--measurable,
	\begin{equation} \label{eq:Mct_Pb_martingale}
		\E^{\Pb} \left[
			{\< \mub, (\tilde\Mc^{\Phi_\varphi, \mub}_t-\tilde\Mc^{\Phi_\varphi, \mub}_s) h \>^2}
		\right]
		=
		0.
	\end{equation}
	We start by observing that $\tilde\Mc^{\Phi_\varphi, \mu^n}_t(Z^{i})$ is a martingale under $\P^n$ with representation
	\begin{multline*}
		\tilde\Mc^{\Phi_\varphi, \mu^n}_t(Z^{i})
		= 
		\Phi_{\varphi}(Z^{i}_0)
		 +
		\int_0^t
		\Phi_{\varphi}'(Z^{i}_s)\sum_{k\in K^{i}_s}D\varphi^k(X^{i, k}_s)\sigma(s,X^{i, k}_{s\wedge\cdot},\mu^n_s)
		dW^{i,k}_s\\
		 +
		\int_{(0,t]\x[0,\gammab] \x [0,1]}
		\sum_{k\in K^{i}_{s-}} G^{i,k}_s(z)
		\bar{Q}^{i,k}(ds,dz),
	\end{multline*}
	where $\bar{Q}^{i,k}(ds,dz):=Q^{i,k}(ds,dz) - ds\,dz$ is the compensated Poisson random measure and 
    \begin{align*}
        G^{i,k}_s(z)
        :=
        \sum_{\ell\geq0}\left(
            \Phi_{\varphi}(Z^{i}_{s-} + \sum_{j=1}^{\ell}\delta_{(kj,X^{i, k}_s)})-\Phi_{\varphi}(Z^{i}_{s-})
        \right)\1_{I_\ell(s,X^{i,k}_{s\wedge\cdot},\mu^n_{s-})\x[0,\gamma(s,X^{i,k}_{s\wedge\cdot},\mu^n_{s-})]}(z).
    \end{align*}
    Since $(W^{i,k},Q^{i,k})_{k\in\K, i=1\cdots,n}$ are independent Brownian motions and Poisson random measures, the martingales $(\tilde\Mc^{\Phi_\varphi, \mu^n}_t(Z^{i}))_{i=1,...,n}$ are orthogonal under $\P^n$. It follows that
    \begin{align*}
        \E^{\Pb^n}\left[
            \<\mub, (\tilde\Mc^{\Phi_\varphi, \mub}_t-\tilde\Mc^{\Phi_\varphi, \mub}_s)h\>^2
        \right]
        & =
        \E^{\P^n}\left[
            \left(
                \frac{1}{n}\sum_{i=1}^n h(Z^{i})\left(\tilde\Mc^{\Phi_\varphi, \mu^n}_t(Z^{i})-\tilde\Mc^{\Phi_\varphi, \mu^n}_s(Z^{i})\right)
            \right)^2
        \right]\\
        & = 
        \frac{1}{n^2}\sum_{i=1}^n\E^{\P^n}\left[
            h(Z^{i})^2\left(\tilde\Mc^{\Phi_\varphi, \mu^n}_t(Z^{i})-\tilde\Mc^{\Phi_\varphi, \mu^n}_s(Z^{i})\right)^2
        \right]\\
        & = 
        \frac{1}{n} \E^{\P^n}\left[
            h(Z^{1})^2\left(\tilde\Mc^{\Phi_\varphi, \mu^n}_t(Z^{1})-\tilde\Mc^{\Phi_\varphi, \mu^n}_s(Z^{1})\right)^2
        \right].
    \end{align*}
    In addition, the increment $\tilde\Mc^{\Phi_\varphi, \mu^n}_t(Z^{1})-\tilde\Mc^{\Phi_\varphi, \mu^n}_s(Z^{1})$ admits the following quadratic variation:
        \begin{equation*}
        \int_s^t
            \Phi_{\varphi}'(Z^{1}_r)^2\sum_{k\in K^{1}_r}\left|D\varphi^k(X^{1,k}_r)\sigma(r,X^{1,k}_{r\wedge\cdot},\mu^n_r)\right|^2
        \,dr
        +
        \int_s^t
            \int_{[0,\gammab] \x [0,1]} \sum_{k\in K^{1}_r}G^{1,k}_r(z)^2 dz
        \,dr.
    \end{equation*}
    By Assumption \ref{A.1}, together with the fact that $\Phi\in C^2_b(\R)$ and $\varphi \in C^2_b(\K\x\R^d)$, there exists a constant $C>0$ that may vary from line to line such that 
    \begin{align*}
        \int_s^t
            \Phi_{\varphi}'(Z^{1}_r)^2\sum_{k\in K^{1}_r}\left|D\varphi^k(X^{1,k}_r)\sigma(r,X^{1,k}_{r\wedge\cdot},\mu^n_r)\right|^2
        \,dr
        \leq
        C\int_s^t
            \# K^{1}_r
        \,dr,
    \end{align*}
    and
    \begin{multline*}
       \int_s^t
            \int_{[0,\gammab] \x [0,1]} \sum_{k\in K^{1}_r}G^{1,k}_r(z)^2 dz
        \,dr \\
        \begin{aligned}
        & =  \int_s^t\sum_{k\in K^{1}_r}\gamma(r,X^{1,k}_{r\wedge\cdot},\mu^n_r)
            \sum_{\ell\geq0}\Big(
                \Phi_{\varphi}(Z^{1}_r + \sum_{j=1}^{\ell}\delta_{(kj,X^{1,k}_r)})-\Phi_{\varphi}(Z^{1}_r)
            \Big)^2p_\ell(r,X^{1,k}_{r\wedge\cdot},\mu^n_r)
        \,dr \\
       &  \leq
        C\int_s^t
            \# K^{1}_r
        \,dr.
        \end{aligned}
    \end{multline*}
    Along with Lemma \ref{lemm:N_finite}, it follows that
    \begin{align*}
	\E^{\Pb^n}\left[
            \<\mu, (\tilde\Mc^{\Phi_\varphi, \mu}_t-\tilde\Mc^{\Phi_\varphi, \mu}_s)h\>^2
        \right]
        \leq
        \frac{C\|h\|^2(t-s)}{n}\E^{\P^n}\left[
            \sup_{s\leq r\leq t}\# K^{1}_r
        \right]
        \xrightarrow[n\rightarrow\infty]{}
        0.
    \end{align*}
    Finally,  following the arguments of the proof of~\cite[Lemma~4.14]{Claisse Ren Tan 2019}, we can prove, up to the localizing sequence $\tilde{\tau}_{\tilde{n}}:=\inf\{r\ge 0;\ \langle \Zt_r,\1\rangle\ge \tilde{n}\},$ that the following convergence holds:  for all $s, t\notin D_{\Pb},$ a countable subset of $[0,T],$ such that $s\le t,$
	$$
		\E^{\Pb^n}\left[
		\<\mub, (\tilde\Mc^{\Phi_\varphi, \mub}_{t\wedge \tilde{\tau}_{\tilde{n}}}-\tilde\Mc^{\Phi_\varphi, \mub}_{s\wedge \tilde{\tau}_{\tilde{n}}})h\>^2
		\right]
		\xrightarrow[n\rightarrow\infty]{}
		\E^{\Pb} \left[
			\< \mub, (\tilde\Mc^{\Phi_\varphi, \mub}_{t\wedge \tilde{\tau}_{\tilde{n}}}-\tilde\Mc^{\Phi_\varphi, \mub}_{s\wedge \tilde{\tau}_{\tilde{n}}}) h \>^2
		\right].
	$$
	Note that $\tilde{\tau}_{\tilde{n}}$ is continuous for the Skorokhod topology in view of Jacod and Shirayev~\cite[Proposition~VI.2.7]{Jacod 2003}. 
	This proves that for all $s, t \notin D_{\Pb},$ $\Pb$--a.e.  $\omb,$
		$$
	  \< \mub(\omb), (\tilde\Mc^{\Phi_\varphi, \mub(\omb)}_{t\wedge \tilde{\tau}_{\tilde{n}}}-\tilde\Mc^{\Phi_\varphi, \mub(\omb)}_{s\wedge \tilde{\tau}_{\tilde{n}}}) h \>
		= 0.
	$$	
		Using the continuity of $\tilde\Mc^{\Phi_\varphi, \mub(\omb)}$, we obtain that this equality actually holds for all $0 \le s \le t \le T,$ and thus the stopped process $\tilde\Mc^{\Phi_\varphi, \mub(\omb)}_{\tilde{\tau}_{\tilde{n}}\wedge\cdot}$ is a $(\mub(\omb), \Ft)$--martingale for $\Pb$--a.e.\ $\omb.$ 
		To conclude, it remains to observe that, by arguments similar to Lemma~\ref{lemm:N_finite}, it holds
	\begin{equation*}
	\E^{\mu(\omb)}\left[
			\sup_{0\leq t\leq T} {\langle \Zt_t,\1\rangle}
			\right]
			<
			\infty, \quad \Pb-\text{a.s.},
	\end{equation*}
	so that the process $\tilde\Mc^{\Phi_\varphi, \mub(\omb)}$ is uniformly integrable and thus it is a $(\mub(\omb), \Ft)$--martingale.
	\qed

\appendix
\section{Stability of McKean--Vlasov Branching Diffusion}\label{sec:appendix}

Let $\tilde{b},\tilde{\sigma},\tilde{\gamma},(\tilde{p}_\ell)_{\ell\geq0}$ be respectively drift and diffusion coefficients, death rate and progeny distribution satisfying Assumptions \ref{A.1} and \ref{A.2}. Let $\tilde{\xi}$ be an $E$--valued $\Fc_0$--random variable such that $\E[\langle\tilde{\xi}, \1{}\rangle]< +\infty.$ 
Denote by $\tilde{Z}_t:=\sum_{k\in \tilde{K}_t}\delta_{(k,\tilde{X}^k_t)}$ the unique solution to SDE~\eqref{eq:SDE_MKVB}--\eqref{eq:SDE_MKVB2} with coefficients $\tilde{b},\tilde{\sigma},\tilde{\gamma},(\tilde{p}_\ell)_{\ell\geq0}$ and initial condition $\tilde{\xi}$ instead of $b,\sigma,\gamma,(p_\ell)_{\ell\geq0}$ and $\xi.$

\begin{proposition}[Stability of SDE]
    \label{prop:stability}
    Let Assumptions \ref{A.1} and \ref{A.2} hold and assume that $\E[\langle \xi, \1{}\rangle]< +\infty.$   Let $Z$ be the unique solution to SDE \eqref{eq:SDE_MKVB}--\eqref{eq:SDE_MKVB2} and $\tilde{Z}$ be defined as above. Then there exists a constant $C>0$ such that
    \begin{align*}
        \E\left[d (Z,\tilde{Z})\right]
        \leq
        C\left(
            \E\left[d_E(\xi,\tilde{\xi})\right]
            +
            \|b-\tilde{b}\|
            +
            \|\sigma-\tilde{\sigma}\|
            +
            \|\gamma-\tilde{\gamma}\|
            +
             \Big\|\sum_{\ell \geq 0} \ell  |p_{\ell} - \tilde{p}_{\ell}| \Big\|
        \right).
    \end{align*}
\end{proposition}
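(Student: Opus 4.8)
The plan is to revisit the proof of Lemma~\ref{lemma compare of two solutions}, together with the time--localization argument in the proof of Theorem~\ref{thm:strong_sol}, now allowing the coefficients \emph{and} the root to differ. We realize $Z$ and $\tilde{Z}$ on the common probability space carrying $\xi,\tilde\xi$ and the family $(\Wk,\Qk)_{k\in\K}$, using the representations $Z_t=\sum_{k\in K_t}\delta_{(k,X^k_t)}$ and $\tilde{Z}_t=\sum_{k\in\tilde{K}_t}\delta_{(k,\tilde{X}^k_t)}$, and we write $K^{\cap}_t:=K_t\cap\tilde{K}_t$, $K^{\triangle}_t:=K_t\triangle\tilde{K}_t$, and $\eps:=\|b-\tilde{b}\|+\|\sigma-\tilde\sigma\|+\|\gamma-\tilde\gamma\|+\Big\|\sum_{\ell\ge0}\ell|p_\ell-\tilde p_\ell|\Big\|$. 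The decisive remark is that, $\mu=\P\circ Z^{-1}$ and $\tilde\mu=\P\circ\tilde{Z}^{-1}$ being coupled on this space, $\Wc_1(\mu_t,\tilde\mu_t)\le\E[d_t(Z,\tilde{Z})]$ for all $t\in[0,T]$; this is what closes the McKean--Vlasov loop, exactly as in the proof of Theorem~\ref{thm:strong_sol}.

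First I would rerun Step~1 of Lemma~\ref{lemma compare of two solutions}, the bound on $\sup_{0\le t\le T'}\#K^{\triangle}_t$. The decomposition~\eqref{L} into $J^1,\dots,J^5$ still applies, with the extra initial term $\#(K_0\triangle\tilde{K}_0)$ (which was absent there since the two trees shared their root). Each time a death rate or a progeny law of the two systems is compared along a particle of $K^{\cap}$, I would insert one more triangle inequality, \emph{e.g.}
\begin{equation*}
\big|\gamma(s,X^k_{s\wedge\cdot},\mu_s)-\tilde\gamma(s,\tilde{X}^k_{s\wedge\cdot},\tilde\mu_s)\big|\le L\big(\|X^k_{s\wedge\cdot}-\tilde{X}^k_{s\wedge\cdot}\|+\Wc_1(\mu_s,\tilde\mu_s)\big)+\|\gamma-\tilde\gamma\|,
\end{equation*}
and, by the argument of Lemma~\ref{lemma wasserstein distance of p} combined with one more triangle inequality,
\begin{equation*}
\sum_{\ell\ge0}\ell\,\big|p_\ell(s,X^k_{s\wedge\cdot},\mu_s)-\tilde p_\ell(s,\tilde{X}^k_{s\wedge\cdot},\tilde\mu_s)\big|\le M'\big(\|X^k_{s\wedge\cdot}-\tilde{X}^k_{s\wedge\cdot}\|+\Wc_1(\mu_s,\tilde\mu_s)\big)+\Big\|\sum_{\ell\ge0}\ell|p_\ell-\tilde p_\ell|\Big\|.
\end{equation*}
Since the $\eps$--contributions come multiplied by the population size, and $\E[\int_0^{T'}\#K^{\cap}_t\,dt]\le T'\,\E[\sup_{0\le t\le T}\#(K_t\cup\tilde{K}_t)]<\infty$ by Lemma~\ref{lemma integrability}, this produces the analogue of~\eqref{estimate for branching} with the additive correction $\E[d_E(\xi,\tilde\xi)]+C\eps$ on the right--hand side.

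Then I would rerun Step~2, the bound on $\sum_{k\in\K}\sup_{0\le t\le T'}\1_{k\in K^{\cap}_t}\|X^k_{t\wedge\cdot}-\tilde{X}^k_{t\wedge\cdot}\|$: the path decomposition of $X^k-\tilde{X}^k$ on $[\bar{S}^k,t]$ is as in~\eqref{eq:ineq_path}, now with $|b^k_s-\tilde{b}^k_s|\le L(\cdots)+\|b-\tilde b\|$ and likewise for $\sigma$ (the stochastic integral handled by Burkholder--Davis--Gundy exactly as before), while the root--level term contributes the extra $\E[\sum_{k\in K^{\cap}_0}|X^k_0-\tilde{X}^k_0|]$, so that together with $\E[\#(K_0\triangle\tilde{K}_0)]$ this is precisely $\E[d_E(\xi,\tilde\xi)]$. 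Combining both steps and using $\Wc_1(\mu_t,\tilde\mu_t)\le\Wc_1(\mu_{T'},\tilde\mu_{T'})\le\E[d_{T'}(Z,\tilde{Z})]\le\E[\bar{d}_{T'}(Z,\tilde{Z})]$ on $[0,T']$, I obtain
\begin{equation*}
\E\big[\bar{d}_{T'}(Z,\tilde{Z})\big]\le c\,(T'+\sqrt{T'})\,\E\big[\bar{d}_{T'}(Z,\tilde{Z})\big]+C\,\E\big[d_E(\xi,\tilde\xi)\big]+C\eps,
\end{equation*}
with $c,C>0$ depending on $\gammab,L,M,M'$ and on $\E[\#K_0]$ (the latter entering through the feedback estimate, as in~\eqref{eq:kappa}). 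Choosing $T'$ small enough that $c(T'+\sqrt{T'})<1$, exactly as in the proof of Theorem~\ref{thm:strong_sol}, yields $\E[d_{T'}(Z,\tilde{Z})]\le C(\E[d_E(\xi,\tilde\xi)]+\eps)$. Finally I would globalize by shifting the origin of time to $T',2T',\dots$: on the $(j{+}1)$--th window the role of the initial discrepancy is played by the (stopped) discrepancy at time $jT'$, already under control, and since finitely many windows cover $[0,T]$, summing the estimates gives the claim with a constant $C$ depending only on $T,\gammab,L,M,M'$ and the first moments of $\xi,\tilde\xi$.

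The main obstacle, as in Lemma~\ref{lemma compare of two solutions}, is the bookkeeping of the interaction between the two sources of discrepancy --- distinct labels and distinct positions --- through the branching mechanism; the one genuinely new point is that the root discrepancy $d_E(\xi,\tilde\xi)$ must be tracked correctly, in particular through the ``ancestor position at birth'' term $\bar{S}^k$, and one must check that $\eps$ always enters multiplied by the integrable functional $\int_0^{T'}\#K^{\cap}_t\,dt$ rather than by an uncontrolled population quantity. The McKean--Vlasov feedback and the time--localization themselves require no new idea beyond the proof of Theorem~\ref{thm:strong_sol}.
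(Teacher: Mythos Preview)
Your proposal is correct, and the core strategy---rerun Steps~1 and~2 of Lemma~\ref{lemma compare of two solutions} with extra triangle inequalities for the coefficient perturbations and the root discrepancy $d_E(\xi,\tilde\xi)$, then close the McKean--Vlasov loop via $\Wc_1(\mu_t,\tilde\mu_t)\le\E[d_t(Z,\tilde Z)]$---is exactly the paper's. The one structural difference is in how you conclude. You retain the $\sqrt{T'}$ prefactor coming from the Burkholder--Davis--Gundy estimate on the Lipschitz-in-$(\xr,m)$ part of $\sigma$, and therefore resort to the time-localization/iteration of Theorem~\ref{thm:strong_sol}. The paper instead collects the estimates into a Gr\"onwall-type inequality on all of $[0,T]$,
\[
\E[d_T(Z,\tilde Z)]\le\E[d_E(\xi,\tilde\xi)]+2\gammab M\int_0^T\E[d_t(Z,\tilde Z)]\,dt+C\eps,
\]
and applies Gr\"onwall directly, without localization. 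This is shorter, but as written the paper's displayed estimates keep only the coefficient-difference parts of $b$ and $\sigma$ and the ancestor-at-birth term; the Lipschitz-in-$(\xr,m)$ contribution from $\sigma$---precisely the term $C_1 L\sqrt{T'}\,\E[\sum_k\sup_t\1_{k\in K^{\cap}_t}\|X^k_{t\wedge\cdot}-\tilde X^k_{t\wedge\cdot}\|]$ that forces the small-time step in Lemma~\ref{lemma compare of two solutions}---does not fit a plain Gr\"onwall and is silently dropped. Your localization handles this term explicitly, so your argument is in fact the more complete of the two.
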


\begin{proof}
   Most of the proof follows from estimates established in Lemma \ref{lemma compare of two solutions}.  Denote $K^{\triangle}_t := K_t\triangle \tilde{K}_t$ and $K^{\cap}_t = K_t\cap \tilde{K}_t$ and recall that
    \begin{align}\label{eq:stability}
        d_{T}(Z,\tilde{Z}) = \sup_{0\leq t\leq T} d_E(Z_t,\tilde{Z}_t) \quad \text{where } d_E(Z_t,\tilde{Z}_t) = \# K^{\triangle}_t + \sum_{k\in K^{\cap}_t}  \big|X^k_t-\tilde{X}^k_t\big|\wedge 1.
    \end{align}
    We observe first by using similar arguments as in \textit{Step 1} of the proof of Lemma \ref{lemma compare of two solutions}, namely, combining \eqref{L} with \eqref{J1J2}, \eqref{regularity J3J4} and \eqref{regularity J5}, that
    \begin{multline*}
        \E\left[
            \sup_{0\leq t\leq T}\#K^{\triangle}_t
        \right]
        \leq
        \E\left[\#K^{\triangle}_0\right]
        +
      \gammab M \E\left[ 
      \int_0^{T} \# K^{\triangle}_t \, dt
          \right]
          \\
        +
       \left(
        (M+1) \|\gamma-\tilde{\gamma}\|
            +
            \gammab \Big\|\sum_{\ell \geq 0} \ell  |p_{\ell} - \tilde{p}_{\ell}| \Big\|
        \right)
        \E\left[ 
      \int_0^{T} \# K^{\cap}_t \, dt
          \right].
    \end{multline*}
  In addition, it follows from similar arguments as in \textit{Step~2} of the proof of Lemma \ref{lemma compare of two solutions} that
    \begin{multline*}
        \E\left[
           \sup_{0\leq t\leq T} \sum_{k\in K^{\cap}_t} \big|X^k_t-\tilde{X}^k_t\big|\wedge 1
        \right]
        \leq 
        \E\left[\sum_{k\in K^{\cap}_0}\left|X^k_0-\tilde{X}^k_0\right|\wedge1\right] 
        +
        \|b-\tilde{b}\| \E\left[\int_0^{T}{\#K^{\cap}_t} \,dt\right]
        \\
        +
       C_1 \sqrt{T}  \|\sigma-\tilde{\sigma}\|
       \E\left[\sum_{k\in\K}\sup_{0\leq t\leq T}\1_{k\in K^{\cap}_t}\right]
        +       
        \gammab M  \E\left[\int_0^{T}\left({\#K^{\triangle}_t + \sum_{k\in K^\cap_t} \big\|X^k_{t\wedge \cdot} -\tilde{X}^k_{t\wedge \cdot} \big\|\wedge 1} \right)dt\right].
    \end{multline*}
Combining both inequalities above together with the following consequence of Lemma~\ref{lemma integrability}
    \begin{equation*}
    \E\left[
            \int_0^{T}
             \# K^{\cap}_t \, dt
        \right] 
        \le 
        T
            \E\left[
             \sum_{k\in\K} \sup_{0\leq t\leq T}\1_{k\in K^{\cap}_t} 
        \right] 
         \le 
            C_0 T e^{\gammab M T},
    \end{equation*}
    where $C_0 :=  \max(\E[\# K_0], \E[\# \tilde{K}_0]),$
we deduce that 
    \begin{multline*}
        \E\left[
           d_{T}(Z,\tilde{Z})
        \right]    
        \leq \E\left[d_E(Z_0, \tilde{Z}_0)\right] 
        + 
        2 \gammab M \int_0^{T} \E\left[d_t(Z,\tilde{Z})\right]  dt
           \\
            +
            \bigg(
           \|b-\tilde{b}\| 
            +
             C_1 T^{-\frac{1}{2}} \|\sigma-\tilde{\sigma}\|
            +
            (M+1) \|\gamma-\tilde{\gamma}\|
            +
            \gammab \Big\|\sum_{\ell \geq 0} \ell  |p_{\ell} - \tilde{p}_{\ell}| \Big\|
            \bigg)
          C_0 T e^{\gammab M T}.
    \end{multline*}
    The conclusion follows immediately by Gr\"owall Lemma.
\end{proof}

\end{document}